\newtheorem{thm}{Theorem}[section]
\newtheorem{lem}[thm]{Lemma}
\newtheorem{prop}[thm]{Proposition}
\newtheorem{cor}[thm]{Corollary}
\theoremstyle{definition}
\theoremstyle{remark}
\newtheorem{rmk}[thm]{Remark}
\newcommand{\p}{\partial}
\newcommand{\R}{\mathbb{R}}
\numberwithin{equation}{section}
\begin{document}

\title[Localized Deformation of Curvatures]{Localized Deformation of the Scalar Curvature and the Mean Curvature}


\author{Hongyi Sheng}
\address{Institute for Theoretical Sciences, Westlake Institute for Advanced Study, Westlake University, Hangzhou, Zhejiang Province 310024, China}
\email{shenghongyi@westlake.edu.cn}
\thanks{}

\subjclass[2020]{Primary 53C21, 35Q75, 46E35}

\date{}

\dedicatory{}

\begin{abstract}
On a compact manifold with boundary, the map consisting of the scalar curvature in the interior and the mean curvature on the boundary is a local surjection at generic metrics \cite{C-V}. We prove that this result may be localized to compact subdomains in an arbitrary Riemannian manifold with boundary. This result is a generalization of Corvino's result \cite{C} about localized scalar curvature deformations; however, the existence part needs to be handled delicately since the linearized problem is non-variational. We also discuss generic conditions that guarantee localized deformations, and related geometric properties.
\end{abstract}

\maketitle


\section{Introduction}\label{sec1}
An initial data set for a spacetime $(\mathbb{L}, \gamma)$ satisfying the Einstein field equations is a triple $(M,g,k)$, where $(M,g)$ is a spacelike hypersurface in $\mathbb{L}$, and $k$ is its second fundamental form. Physically plausible initial data sets must adhere to both the Einstein constraint equations and the dominant energy condition (DEC). Consequently, deformations aimed at satisfying the DEC serve as crucial analytical tools in the study of initial data sets. Broadly, two major approaches have been developed: \emph{conformal methods} and \emph{localized deformations}.

The conformal method, which involves a global rescaling of the metric, has a long and successful history. It transforms the deformation problem into a tractable PDE problem. This approach was famously used by Schoen and Yau in their proof of the Positive Mass Theorem \cites{S-Y1, S-Y3}. In the time-symmetric case ($k=0$), where the DEC implies non-negative scalar curvature, this method underpins the extensive study of prescribing scalar curvature. This includes the classic Yamabe problem on closed manifolds \cites{Trudinger, Aubin, Schoen} and its natural extension to manifolds with boundary, which involves simultaneously prescribing the interior scalar curvature $R$ and the boundary mean curvature $H$ (the latter corresponding to trapping conditions in general relativity; see \cites{Dain-J-K}). This boundary problem, initiated by Escobar \cites{E, E1, E2, E3, E4}, has seen significant development \cites{Han-Li, Han-Li1, Mayer-N, Brendle-C, Marques}.

In contrast, localized deformations—which involve \emph{compactly supported} variations of the metric—present a different set of challenges and goals. Their key feature, the preservation of geometry outside the region of interest, makes them essential for gluing constructions \cites{C, C-E-M, C-H, C-S, C-D, Delay} and for proving rigidity type results \cite{C}*{Theorem 8}. For interior domains, this field was pioneered by Corvino \cite{C}, who established the local surjectivity of the scalar curvature operator. This was later extended to the full constraint map with Schoen \cite{C-S} and developed into a systematic theory using weighted spaces by Chru\'sciel-Delay \cite{C-D}, Carlotto-Schoen \cites{Carlotto-S}, Corvino-Huang \cites{C-H}, and others.

Despite this progress, previous work on localized deformations has focused almost exclusively on the interior setting (on closed manifolds or compact subdomains). \emph{The problem of constructing localized deformations near the boundary of a manifold has remained largely unaddressed.}

This paper provides the first systematic study of this problem. We introduce a new perspective using Fredholm theory to analyze localized deformations that are supported near \emph{a portion} of the manifold's boundary. Our main focus is on the coupled system of simultaneously deforming the scalar curvature $R$ in the interior and the mean curvature $H$ on the boundary. \emph{This coupling fundamentally alters the problem's structure:} the linearized problem is no longer variational, as it is in the interior-only case. This change also requires a more delicate analysis of a non-elliptic operator.

Our main result, stated below, generalizes Corvino's work \cite{C} by proving the local surjectivity of the map $\Psi'_{g_0}(a) = (R(g_0+a), H(g_0+a))$ for symmetric $(0,2)$-tensors $a$ with compact support near a boundary patch.

\begin{thm}\label{main}
Let $M^n (n \ge 2)$ be a Riemannian manifold (not necessarily compact) with boundary $\p M$ and a $\mathcal{C}^{k+4,\alpha}$-metric $g_0$. Let $\overline{\Omega^n}\subset\overline{M}$ be a compact, connected subdomain with $C^{k+4,\alpha}$ boundary such that:
\begin{itemize}
\item $\Omega$ is contained in a neighborhood of $\p M$,
\item part of $\partial\Omega$ coincides with $\p M$, and
\item the interior of this common boundary, $\varnothing\neq\Sigma^{n-1}\triangleq\operatorname{int}\left(\p\Omega\cap\p M\right)$, is a $C^{k+4,\alpha}$ hypersurface.
\end{itemize}
Assume the generic conditions from Section~\ref{sec3} hold. Then there exists $\epsilon > 0$ such that for all pairs $(R',H')$ satisfying:
\begin{itemize}
\item[1.] $R'\in C^{k, \alpha}(\Omega\cup\Sigma)$, with $R' - R(g_0)\in \mathcal{B}_{0}(\Omega)$ and $\operatorname{supp}(R' - R(g_0))\subset\overline{\Omega}$,
\item[2.] $H'\in C^{k+1, \alpha}(\Sigma)$, with $H' - H(g_0)\in \mathcal{B}_{1}(\Sigma)$ and $\operatorname{supp}(H' - H(g_0))\subset\overline{\Sigma}$,
\item[3.] $\|R' - R(g_0)\|_{\mathcal{B}_{0}(\Omega)} + \|H' - H(g_0)\|_{\mathcal{B}_{1}(\Sigma)} < \epsilon,$
\end{itemize}
there exists a metric $g\in \mathcal{C}^{k+2,\alpha}$ on $M$ with $R(g) = R'$ in $\Omega$,  $H(g) = H'$ on $\Sigma$ and $g \equiv g_0$ outside $\Omega\cup\Sigma$.
\end{thm}
(The precise definitions of the weighted Banach spaces $\mathcal{B}_{0}(\Omega)$ and $\mathcal{B}_{1}(\Sigma)$ are given in Section~\ref{sec2}, and the generic conditions are discussed in Section~\ref{sec3}.)

\subsection*{Key technical innovations}
Our result is not a trivial generalization of the interior case. The presence of the partial boundary $\Sigma$ and its ``corner" $\partial\Sigma$ (a codimension-$2$ submanifold) introduces significant new difficulties:
\begin{itemize}[leftmargin=*]
\item[1.] \textbf{Non-Variational Structure:} Unlike the interior problem, the linearized coupled system lacks a variational structure; its analysis thus relies on Fredholm theory. We show the space of weak solutions decomposes into a principal component ($a=\rho L^*u$) and a finite-dimensional subspace. While the principal component satisfies an elliptic boundary value problem, allowing for the derivation of weighted Schauder estimates, the finite-dimensional subspace does not. As its underlying tensor boundary value problem (\ref{solution2}) is not elliptic, this subspace demands a careful construction of its basis to establish optimal regularity.
\item[2.] \textbf{Delicate Analysis at the ``Corner":} The localization requires weight functions dependent on the distance to $\partial\Omega\setminus\Sigma$. The analysis of weighted Sobolev spaces near the codimension-$2$ region $\partial\Sigma$ is highly subtle, as the distance function is not smooth there and its higher derivatives can blow up.
\item[3.] \textbf{New Embedding Theorems:} We establish new embedding theorems between weighted Sobolev spaces, proving a converse to results in \cite{C-H}. This requires a careful application of Hardy's inequality, especially near $\partial\Sigma$.
\end{itemize}

We note that the global deformation result of Cruz-Vit\'orio \cite{C-V}*{Theorem 3.5} can be seen as a special case of our theorem, where $\Omega = M, \Sigma = \partial M$, and the weight function is identically $1$.

\subsection*{Applications and implications}
Our work has several important implications that we shall defer to a subsequent publication. First, it yields a density theorem for a class of asymptotically flat, scalar-flat manifolds with non-compact minimal boundary that are Schwarzschild at infinity (a boundary version of the density theorem in \cite{C}; see \cite{Sheng-Zhao}). More broadly, it provides a systematic methodology that can be iterated to extend other landmark interior gluing results \cites{C-S, Carlotto-S, C-H} to settings with boundary.

Second, as an application, we demonstrate how our method resolves a limitation in the problem of regularizing area-minimizing hypersurfaces in initial data sets. It is well-known that in dimensions $n \geq 8$, such hypersurfaces can develop singularities. For $n=8$, Smale \cite{Sm} showed that a local metric perturbation can generically produce a smooth minimizer. However, this perturbation does not control scalar curvature, potentially violating the DEC and limiting its physical applicability.

Our localized deformation technique overcomes this obstacle (see Figure \ref{fig: deformation}). Beginning with an asymptotically flat, scalar-flat manifold $(M^8, g_0)$ containing a singular area-minimizing boundary $\Sigma_0$, we first apply Smale's result to obtain a metric $g_1$ for which the minimizer $\Sigma_1$ is smooth. We then use our main theorem to construct a second metric $g_2$, close to $g_1$, such that the scalar curvature vanishes in a neighborhood of $\Sigma_1$ and the mean curvature of $\Sigma_1$ is zero. This ensures $\Sigma_1$ is mean convex, so the new minimizer $\Sigma_2$ lies outside $\Sigma_1$. An application of the Hardt-Simon regularity theorem \cite{H-S} then implies $\Sigma_2$ is smooth. Crucially, $(M, g_2)$ remains asymptotically flat and scalar-flat in the exterior region, preserving the ADM mass. This provides a method for regularizing a singular apparent horizon to a smooth one in dimension $8$ while maintaining the DEC.

\begin{figure}
\begin{center}
						
\begin{tikzpicture}
\draw (0, 1.67) .. controls (0.3, 0)  .. (0, -1.67); \draw [loosely dashed] (0, 1.67) .. controls (-0.3, 0) .. (0, -1.67);
\draw (-1, 1.47) .. controls (-0.7, 0)  .. (-1, -1.47); \draw [loosely dashed] (-1, 1.47) .. controls (-1.3, 0) .. (-1, -1.47);
\draw (-2, 1.4) .. controls (-1.7, 0)  .. (-2, -1.4); \draw (-2, 1.4) .. controls (-2.3, 0) .. (-2, -1.4);

\draw (-2, 1.4) .. controls (-1, 1.45) and (0, 1.5) .. (2.5, 2.5);
\draw (-2, -1.4) .. controls (-1, -1.45) and (0, -1.5) .. (2.5, -2.5);

\fill[fill=gray!50]
(-2, 1.4) .. controls (-1.7, 0)  .. (-2, -1.4) .. controls (-2.3, 0) ..(-2, 1.4);
\fill[fill=gray!50]
(0, 1.67) .. controls (0.3, 0)  .. (0, -1.67) .. controls (-0.3, 0) .. (0, 1.67);
\fill[fill=gray!50]
(-1, 1.47) .. controls (-0.7, 0) .. (-1, -1.47) .. controls (-1.3, 0) .. (-1, 1.47);

\node at (-2,1.7) {$\Sigma_0$}; 
\node at (-1,1.85) {$\Sigma_1$};
\node at (0,2) {$\Sigma_2$};
\draw (-2, 0) -- (-2.5,0); \node[right] at (-4.05, 0) {singular};
\node at (2,0) {$M$};

\draw (5, 1.47) .. controls (5.3, 0)  .. (5, -1.47); \draw [loosely dashed] (5, 1.47) .. controls (4.7, 0) .. (5, -1.47);
\draw (4, 1.4) .. controls (4.3, 0)  .. (4, -1.4); \draw (4, 1.4) .. controls (3.7, 0) .. (4, -1.4);
\draw (4.12, 0.9) .. controls (6, 0.3) and (6,-0.3) .. (4.12, -0.9);

\draw (4, 1.4) .. controls (5, 1.45) and (6, 1.5) .. (8.5, 2.5);
\draw (4, -1.4) .. controls (5, -1.45) and (6, -1.5) .. (8.5, -2.5);

\fill[fill=gray!50]
(4, 1.4) .. controls (4.3, 0)  .. (4, -1.4) .. controls (3.7, 0) ..(4, 1.4);
\fill[fill=gray!50]
(5, 1.47) .. controls (5.3, 0) .. (5, -1.47) .. controls (4.7, 0) .. (5, 1.47);

\draw[pattern=north west lines]
(4.12, 0.9) .. controls (6, 0.3) and (6, -0.3).. (4.12, -0.9).. controls (4.15, 0) .. (4.12, 0.9);

\node at (4,1.7) {$\Sigma_0$}; 
\node at (5,1.75) {$\Sigma_1$};
\node at (8,0) {$M$};
\draw (4.5, -0.5) -- (5,-1.9); \node[below right] at (4.3,-1.8) {neg. scalar curv.};

\end{tikzpicture}

						
\begin{tikzpicture}
\draw (0, 3) .. controls (0.5, 0)  .. (0, -3);
\draw (0, 3) .. controls (-0.5, 0) .. (0, -3);
\draw (0.17, 2) .. controls (3, 0.5) and (3,-0.5) .. (0.17, -2);
\draw (0, 3) .. controls (4, 1.5) and (4,-1.5) .. (0, -3);

\fill[fill=gray!50]
(0, 3) .. controls (0.5, 0)  .. (0, -3) .. controls (-0.5, 0) ..(0,3); 

\draw[pattern=north west lines]
(0.17, 2) .. controls (3, 0.5) and (3,-0.5).. (0.17, -2).. controls (0.45, 0) .. (0.17, 2);

\draw (-1, -1) -- (0,0); \node[left] at (-1,-1) {$\Sigma$}; \node[left] at (-0.2,-1.4){(part of $\Sigma_1$)};
\draw (3, 1) -- (2.3,1); \node[right] at (3, 1) {$\Omega$};
\draw (1.25, -1) -- (3,-1.5); \node[right] at (3, -1.5) {neg. scalar curv.};

\end{tikzpicture}
\caption{Smale's perturbation and localized deformation}
\label{fig: deformation}


\end{center}
\end{figure}

\subsection*{Structure of the paper}
The paper is organized as follows. 
\begin{itemize}[leftmargin=*]
\item In Section~\ref{sec2}, we introduce basic notation and establish preliminary results in weighted spaces, including trace, density, and embedding theorems. 
\item In Section~\ref{sec3}, we define the generic conditions that guarantee localized deformations and discuss their geometric interpretation. 
\item Section~\ref{sec4} is devoted to the proof of our main theorem:
\begin{itemize}
\item[\ref{sec4.1}:] We establish Poincar\'e-type inequalities in weighted spaces, which are critical for the Dirichlet problems (\ref{Dirichlet eqn2 weak}), (\ref{Dirichlet}) and the self-adjoint problem (\ref{eqn:self-adjoint}).
\item[\ref{sec4.2}:] We analyze two Dirichlet problems (Theorems~\ref{Dirichlet1}, \ref{ext}). Theorem~\ref{ext} allows us to simplify boundary value problems by transforming them into problems on boundary function spaces. This leads to the construction of $\mathcal{D}$, a weighted Hilbert space that replaces $H^{3/2}(\Sigma)$ in the classical setting.
\item[\ref{sec4.3}:] We construct a self-adjoint problem by adding a lower-order term to (\ref{prob: 2u}). This new problem is then solved using variational methods (Theorem~\ref{self-adjoint soln weak}).
\item[\ref{sec4.4}:] We show that the added term is a compact operator. This allows us to apply Fredholm theory to solve (\ref{prob: 2u}) in Theorem~\ref{Fredholm}.
\item[\ref{sec: weak soln}:] We study the densely defined operators $\Psi$ and $B$ to prove the existence of weak solutions (Theorems~\ref{Surjectivity of B}, \ref{non-homogeneous BVP}). We show the tensor solution space decomposes into a principal component ($a=\rho L^*u$) and a finite-dimensional subspace. Because the boundary value problem for tensors (\ref{solution2}) is not elliptic, we need to carefully construct a basis for the finite-dimensional subspace and establish its optimal regularity (Theorem~\ref{basis}).
\item[\ref{sec4.6}:] The principal component satisfies an elliptic boundary value problem, allowing us to derive weighted Schauder estimates (Theorems~\ref{Global Schauder estimates}, \ref{Global Schauder2}). These estimates, combined with Theorem~\ref{basis}, yield the optimal regularity.
\item[\ref{sec4.7}:] Finally, we employ a Picard iteration scheme to extend this linear analysis and construct the desired solution to the full nonlinear problem.
\end{itemize}
\end{itemize}

\section*{Acknowledgments}
I would like to thank my PhD advisor Rick Schoen for his wisdom and patience during the preparation of my dissertation. I would like to thank Kai-Wei Zhao for helpful discussions.

\section{Preliminaries}\label{sec2}
\subsection{Basic notation}
Let $M^n \,(n \ge 2)$ be a smooth Riemannian manifold with boundary $\p M$. Let $\overline{\Omega^n}\subset \overline{M}$ be a compact, connected subdomain with smooth boundary, situated in a neighborhood of $\p M$, such that the interior of the intersection $\p\Omega\cap\p M$ is a non-empty, smooth hypersurface; we denote this shared portion by $\Sigma^{n-1}$. Let $\Sigma' \triangleq \partial\Omega\setminus\Sigma$ be the remaining part of the boundary, and let $\Gamma^{n-2} \triangleq \partial \Sigma$ be the (possibly empty) interface between them. For example, one can think of $\Omega$ as a ball and $\Sigma$ as the lower hemi-sphere, see Figure \ref{fig: weight function} below.

We list here some notation and function spaces, where $k \in \mathbb{N}$ and $\alpha\in(0,1)$. Note that we are following the notation of \cite{C} closely but with minor differences. 

\begin{itemize}[leftmargin=*]
\item $\operatorname{Ric}_g = R_{ij}$ and $R_g = R(g) = g^{ij} R_{ij}$ denote the Ricci and scalar curvatures, respectively, of a Riemannian metric $g$ on $M$; we use the Einstein summation convention throughout.
\item Let $D$ and $\nabla$ denote the Levi-Civita connections of $(\Omega, g)$ and $(\p\Omega, \hat{g})$, respectively, with $\hat{g}$ the induced metric on the boundary.
\item Let $d\mu_g$ denote the volume measure induced by $g$, and $d\sigma_g$ the induced surface measure on submanifolds. Note $d\sigma_g = d\mu_{\hat{g}}$ on $\p\Omega$.
\item $\mathcal{S}^{(0,2)}$ denotes the space of symmetric $(0,2)$-tensor fields.
\item $\mathcal{C}^\infty$ denotes the subspace of $\mathcal{S}^{(0,2)}$ consisting of smooth tensors.
\item $\mathcal{H}^k$ denotes the subspace of $\mathcal{S}^{(0,2)}$ consisting of those measurable tensors which are square integrable along with the first $k$ weak covariant derivatives; with the standard $\mathcal{H}^k$-inner product induced by the metric $g$, $\mathcal{H}^k$ becomes a Hilbert space. $H^k$ is defined similarly for functions, and the spaces $\mathcal{H}^k_\text{loc} (H^k_\text{loc})$ are defined as the spaces of tensors (functions) which are in $\mathcal{H}^k(H^k)$ on each compact subset.
\item $\mathcal{M}^k (k > \frac n2)$ denotes the open subset of $\mathcal{H}^k$ of Riemannian metrics, and $\mathcal{M}^{k,\alpha}$ denotes the open subset of metrics in $\mathcal{C}^{k,\alpha}$.
\item Let $\rho$ be a smooth positive function on $\Omega$. Define $L^2_\rho(\Omega)$ to be the set of locally $L^2(d\mu_g)$ functions $f$ such that $f\rho^{\frac12} \in L^2(\Omega, d\mu_g)$. The pairing
$$
\langle f_1, f_2\rangle_{L_{\rho}^{2}(\Omega)}\triangleq\left\langle f_1 \rho^{\frac12}, f_2 \rho^{\frac12}\right\rangle_{L^{2}(\Omega, d\mu_g)}
$$
makes $L^2_\rho(\Omega)$ a Hilbert space. Define $\mathcal{L}^2_\rho(\Omega)$ similarly for tensor fields.
\item Let $H_\rho^k(\Omega)$ be the Hilbert space of $L^2_\rho(\Omega)$ functions whose covariant derivatives up through order $k$ are also $\mathcal{L}^2_\rho(\Omega)$, i.e. the following norm is finite
$$\|u\|^2_{H_\rho^{k}(\Omega)}\triangleq\sum_{j=0}^{k}\left\|D^{j} u\right\|^2_{\mathcal{L}^2_\rho(\Omega)}.$$
Define $\mathcal{H}_\rho^k(\Omega)$ similarly for tensor fields.
\item For integer $0 \leq j \leq k - 1$, let $\gamma_j : H^k(\Omega) \rightarrow H^{k- j -\frac12}(\partial\Omega)$ denote the trace map of the extension of $\gamma_j(u) \triangleq  (D_\nu^j u) \rvert_{\partial\Omega}, \, u\in C^\infty(\overline{\Omega})$. For simplicity, we often use $\hat u$ to denote $\gamma_0 u$ and $u_\nu$ for $\gamma_1 u$.
\item Define
$$
    H_{0}^k(\Omega) \triangleq \text{the closure of }C^\infty_c(\Omega)\text{ in }H^k(\Omega).
$$
Thus if $u\in H_0^k(\Omega)$, $\gamma_j(u) = 0$ for all $0 \leq j \leq k - 1$. Define $\mathcal{H}_{0}^k(\Omega)$ similarly for tensor fields.
\item Define
\begin{align*}
    C^\infty_c(\Omega\cup\Sigma) & \triangleq \left\{u\in C^\infty(\overline{\Omega}): \operatorname{supp} u \subset\joinrel\subset \Omega\cup\Sigma \right\},\\
    H^k(\Omega, \Sigma') & \triangleq \text{the closure of }C^\infty_c(\Omega\cup\Sigma)\text{ in }H^k(\Omega).
\end{align*}
Note that if $u\in C^\infty_c(\Omega\cup\Sigma)$, then any continuous extension of $u$ vanishes at $\Sigma'$, and hence for any $u\in H^k(\Omega, \Sigma')$, $\gamma_j \rvert_{\Sigma'}(u) = 0$ for all $0 \leq j \leq k - 1$. Define $\mathcal{H}^k(\Omega, \Sigma')$ similarly for tensor fields.
\item Define
$$
    H_{0, \rho}^k(\Omega) \triangleq \text{the closure of }C^\infty_c(\Omega)\text{ in }H_\rho^k(\Omega).
$$
Define $\mathcal{H}_{0, \rho}^k(\Omega)$ similarly for tensor fields.
\item Define
$$
    H_{\nu, \rho}^2(\Omega) \triangleq \left\{u\in H^2_\rho(\Omega): u_\nu = 0 \text{ on } \Sigma\right\}.
$$
\item Let $\left(H_{\nu, \rho}^{2}(\Omega)\right)^*$ denote the dual space of $H_{\nu, \rho}^2(\Omega)$, and $\left(H_{0, \rho}^{k}(\Omega)\right)^*$ the dual space of $H_{0, \rho}^k(\Omega)$. 
\end{itemize}

\subsection{Linearized equations and integration by parts formula}
The linearization of the scalar curvature $L_g$ is given by \cite{F-M}*{Lemma 2}, and the linearization of the mean curvature $\dot{H}_g$ can be found in \cite{M-T}*{Eq. 33}. In the appendix, we use local coordinates to derive an equivalent formula for $\dot{H}_g$. As a summary, we have
\begin{prop}\label{linearized RH}
Let $l+2>\frac{n}{2}+1$ and $k \geq 0$.

The scalar curvature map is a smooth map of Banach manifolds, as a map $R: \mathcal{M}^{l+2}(\Omega) \rightarrow H^{l}(\Omega)$, or $R: \mathcal{M}^{k+2, \alpha}(\overline\Omega) \rightarrow C^{k, \alpha}(\overline\Omega)$. The linearization $L_g$ of the scalar curvature operator is given by
$$L_g(a)=-\Delta_{g}\left(\operatorname{tr}_{g}a\right)+\operatorname{div}_{g}(\operatorname{div}_{g}a)-\left<a, \operatorname{Ric}_g\right>_{g}$$
in the above spaces.

The mean curvature map is a smooth map of Banach manifolds, as a map $H: \mathcal{M}^{l+2}(\Omega) \rightarrow H^{l+\frac12}(\p\Omega)$, or $H: \mathcal{M}^{k+2, \alpha}(\overline\Omega) \rightarrow C^{k+1, \alpha}(\p\Omega)$. The linearization $\dot{H}_g$ of the mean curvature operator of $\p\Omega$ is given by
$$\dot{H}_g(a) = \frac12 \nu(\operatorname{tr}_{\p\Omega} a)-\operatorname{div}_{\p\Omega} a(\nu, \cdot) - \frac12 a(\nu, \nu) H_g$$
in the above spaces.
\end{prop}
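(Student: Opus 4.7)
The preceding derivations already establish the pointwise formulas for $L_g(a)$ and $\dot{H}_g(a)$; my first observation would be that although those computations were performed at a base point in Fermi coordinates with vanishing Christoffel symbols, the final expressions are manifestly tensorial (involving only $\nu$, $\operatorname{tr}_\Sigma$, $\operatorname{div}_\Sigma$, $H_g$, $\operatorname{Ric}_g$, $\Delta_g$, $\operatorname{div}_g$) and therefore independent of the coordinate system used to derive them. So the remaining content of the proposition consists solely of the mapping and smoothness assertions for the two nonlinear operators $R$ and $H$.

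For the scalar curvature map, my plan is to work in a finite coordinate atlas on $\overline{\Omega}$ in which $R(g)$ is manifestly a rational polynomial in $g_{ij},\partial_k g_{ij}, \partial^2_{lm}g_{ij}$ whose denominator is a positive power of $\det g$. Since $l+2>\tfrac{n}{2}+1$, one has the Sobolev embedding $H^{l+2}\hookrightarrow C^1$, so $H^{l+2}$ is a Banach algebra and $\det g$ stays bounded away from zero in an $H^{l+2}$-neighborhood of any fixed metric. The standard Sobolev multiplication rules $H^{l+2}\cdot H^{l+j}\hookrightarrow H^{l+j}$ for $j=0,1,2$ then place $R(g)\in H^l(\Omega)$, and smoothness of the map follows from smoothness of multiplication, smoothness of reciprocation on the open set $\{\det g>c\}$, and the obvious smoothness of the differentiation operators between the scale of Hilbert spaces. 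Differentiating this explicit expression recovers $L_g(a)$. The Hölder case is handled identically using $C^{k+2,\alpha}\hookrightarrow C^{k+2}$ and the standard product rules in Hölder spaces.

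For the mean curvature map, I would additionally invoke the trace theorem. The expression $H(g)=\hat g^{ij}h_{ij}$ is a rational polynomial in $g|_\Sigma$ and $(\partial g)|_\Sigma$ (including components of the unit normal $\nu$, itself a smooth rational function of the restriction of $g$ to $\Sigma$). Starting from $g\in H^{l+2}(\Omega)$, the first derivatives lie in $H^{l+1}(\Omega)$, whose trace on $\Sigma$ lies in $H^{l+1/2}(\Sigma)$. Combined with the Banach algebra property of $H^{l+1/2}(\Sigma)$, which holds since $l+\tfrac12>\tfrac{n-1}{2}$ (i.e.\ $l>\tfrac{n}{2}-1$, implied by our hypothesis on $l$), this puts $H(g)\in H^{l+1/2}(\Sigma)$. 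Smoothness follows as before by composing smooth rational operations with continuous linear trace and restriction maps, and differentiating recovers $\dot{H}_g(a)$. In the Hölder scale the restriction loses one derivative to yield $H(g)\in C^{k+1,\alpha}(\Sigma)$.

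The only real obstacle is careful bookkeeping: matching Sobolev trace and multiplication exponents exactly, and verifying that composition of the relevant smooth Banach space maps yields smoothness of the full curvature operators. No new analytic estimate is required beyond standard Sobolev and Hölder calculus together with the pointwise derivations already carried out above, so the proof is essentially a verification exercise once the function spaces are identified correctly.
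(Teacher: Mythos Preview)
Your proposal is correct and matches the paper's treatment: the proposition is presented there explicitly ``as a summary'' of the preceding pointwise derivation of $L_g(a)$ and $\dot H_g(a)$, with the smoothness and mapping statements for $R$ relegated to the standard references \cite{B, F-M} and those for $H$ left implicit. Your sketch of the Sobolev/H\"older bookkeeping (algebra property under $l+2>\tfrac n2+1$, trace theorem, rational-polynomial structure of $R$ and $H$) is exactly the routine verification the paper omits, so you have in fact supplied more detail than the paper itself.
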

We often suppress $g$ from the notation when it is clear from the context.


Now let us derive an integration by parts formula between $L_g$ and its formal $L^2$-adjoint $L^*_g$, i.e.
$$\int_\Omega L_g(a)u\, d \mu_g = \int_\Omega \left<a, L_g^*u\right>d \mu_g + \text{Bdry},$$
where
$$
L_g^*(u)=-\left(\Delta_{g} u\right) g+\operatorname{Hess}_g(u) - u \operatorname{Ric}_g
$$
and
\begin{align*}
\text{Bdry} & = \int_{\p\Omega} \sum_\alpha u((D_\alpha a)_{n\alpha} - \p_n(\operatorname{tr} a))d\sigma_g - \int_{\p\Omega} \sum_\alpha(a_{n\alpha}\p_\alpha u - u_n\operatorname{tr} a)d\sigma_g\\
		& = \int_{\p\Omega} \sum_i u((D_i a)_{ni} + (D_n a)_{nn} - \p_n(\operatorname{tr}_{\p\Omega} a) - \p_n a_{nn})d\sigma_g \\
		& \quad - \int_{\p\Omega} \sum_i (a_{ni}\p_i u - u_n\operatorname{tr}_{\p\Omega} a)d\sigma_g.
\end{align*}
Here $e_1, \dots, e_{n-1}$ are tangent to $\p\Omega$ and $e_n=\nu$ is outward normal to $\p\Omega$; $i, j = 1, \dots, n-1; \, \, \alpha = 1, \dots, n$. Note that in Fermi coordinates $D_n e_n = 0$, so
$$(D_n a)_{nn} = \p_n a_{nn} - 2a(e_n, D_n e_n) = \p_n a_{nn}.$$
Since
\begin{align*}
\sum_i (D_i a)_{ni} & = \sum_i \left(\p_i a_{ni} - a(e_n, D_i e_i) - a(D_i e_n, e_i)\right)\\
		& = \sum_i\left(\p_i a_{ni} - a(e_n, \nabla_{e_i} e_i)\right) + \sum_i a(e_n, h_{ii}e_n) - \sum_{i,j}a(h_{ij} e_j, e_i)\\
		& = \operatorname{div}_{\p\Omega} a(\nu, \cdot) + a(\nu, \nu)H - \left<a, h \right>_{\hat{g}},
\end{align*}
we then have
\begin{align*}
\text{Bdry} & = \int_{\p\Omega} \sum_i u((D_i a)_{ni} - \nu(\operatorname{tr}_{\p\Omega} a))d\sigma_g + \int_{\p\Omega} u\operatorname{div}_{\p\Omega} a(\nu, \cdot)d\sigma_g + \int_{\p\Omega} u_\nu\operatorname{tr}_{\p\Omega} a\,d\sigma_g\\
		& = \int_{\p\Omega} 2u\left(- \frac12\nu(\operatorname{tr}_{\p\Omega} a) + \operatorname{div}_{\p\Omega} a(\nu, \cdot) + \frac12 a(\nu, \nu)H\right)d\sigma_g - \int_{\p\Omega} u\left<a, h \right>_{\hat{g}}d\sigma_g\\
		& \quad + \int_{\p\Omega} u_\nu\operatorname{tr}_{\p\Omega} a\,d\sigma_g\\
		& = -\int_{\p\Omega} 2u\dot{H}(a)d\sigma_g - \int_{\p\Omega} u\left<a, h \right>_{\hat{g}}d\sigma_g + \int_{\p\Omega} u_\nu\operatorname{tr}_{\p\Omega} a\,d\sigma_g.
\end{align*}

\begin{prop}[Green’s formula]\label{IBP}
For $a\in\mathcal{C}^\infty(\overline{\Omega}), u\in C^\infty(\overline{\Omega})$, we have
$$\int_\Omega L_g(a)u \, d \mu_g + \int_{\p\Omega} 2\dot{H}_g(a)u\,d\sigma_g = \int_\Omega \left<a, L_g^*u\right> d \mu_g + \int_{\p\Omega} \left(- u\left<a, h \right>_{\hat{g}} + u_\nu\operatorname{tr}_{\p\Omega} a\right)d\sigma_g.$$
\end{prop}

\subsection{Approximate distance and weight function}\label{Weight function}
We would like to define a weight function $\rho$ on $\Omega\cup\Sigma$ whose behavior near $\Sigma'$ is determined by the distance to $\Sigma'$. We will define $0\le\rho\le1$  to tend monotonically to zero with decreasing distance to $\Sigma'$, and use the weight $\rho^{-1}$ which blows up at $\Sigma'$ to form weighted $L^2$-spaces of tensors, which by design will be forced to decay suitably at $\Sigma'$. 

We will work with uniformly equivalent metrics in a bounded open set $\mathcal{U}_0\subset\mathcal{M}^{k,\alpha}(\overline{\Omega})\, (k \ge 2)$. For fixed $g\in \mathcal{U}_0$ and $x\in\Omega\cup\Sigma$, define
\begin{align*}
d_g(x) & = \mathrm{dist}_{(\overline{\Omega}, g)} (x, \Sigma')\\
	& = \inf\{\ell_g(\gamma): \text{any piecewise smooth path $\gamma \subset \overline{\Omega}$ connecting $x$ to a point in }\Sigma'\},
\end{align*}
where $\ell_g(\gamma)$ is the length of $\gamma$ with respect to $g$. 
In most of the existing literature on weighted Sobolev spaces, only the distance function $d(x, \partial \Omega)$, which is smooth near $\partial \Omega$, is considered. However, our $d_g$ is not $C^2$ near $\Gamma$, introducing additional technical difficulties in the analysis. Since our primary interest lies in functions (or tensor fields) that decay as $d_g$ decreases, we instead use a smooth function $\theta$ that is $\varepsilon$-close to $d_g$ in the $C^0$-sense when $d_g$ is small. Note that $d_g$ is smooth away from $\Gamma$, so we only need to take extra care of the regions near $\Gamma$. This function, referred to as an \textbf{approximate distance} to $\Sigma'$, simplifies the analysis while preserving essential geometric properties.

\begin{figure}
\begin{center}
\begin{tikzpicture}
\draw[thick, fill=gray!40] (3,0) arc(0:180:3);
\draw[fill=white] (2.5,0) arc(0:180:2.5);
\draw[] (2.5,0) arc(180:265:0.5);
\draw[] (-2.5,0) arc(360:275:0.5);
\draw[dashed] (2.5,0) arc(180:95:0.5);
\draw[dashed] (-2.5,0) arc(0:85:0.5);

\fill[fill=gray!40] (3, 0) -- (2.5, 0) -- (2.5, 0) arc(180:265:0.5) -- (3, 0)  arc(360:350:3);
\fill[fill=gray!40] (-3, 0) -- (-2.5, 0) -- (-2.5,0) arc(360:275:0.5) -- (-3, 0)  arc(180:190:3);

\draw[blue] (-3,0) arc(180:360:3);

\fill (2.5,0) circle (1pt);
\fill (-2.5,0) circle (1pt);
\fill (3,0) circle (1pt);
\fill (-3,0) circle (1pt);

\draw (-2.7, -2.7) -- (-2.15,-2.1); \node[left] at (-2.7, -2.7) {$\Sigma$};
\draw (-2.7, 2.5) -- (-2.15,2.1); \node[left] at (-2.7, 2.5) {$\Sigma'$};
\draw (2.3, -0.8) -- (3.2,-1); \node[right] at (3.2,-1) {$\Omega_{r_0}$};
\draw (-3.5, 0) -- (-3.1,0); \node[left] at (-3.5, 0) {$\Gamma$};

\end{tikzpicture}
\caption{Collar neighborhood of $\Sigma'$}
\label{fig: weight function}
\end{center}
\end{figure}

Let us construct the approximate distance function $\theta$ in the following way. For $p\in\partial \Omega$, let $x = (x_1, \dots, x_{n-1}, \tau) = (\hat{x}, \tau)$ denote the Fermi coordinates around $p$. By abuse of notation, we will use $\hat{x}$ to denote points and a coordinate system on $\partial \Omega$. When $\hat x \in \partial \Omega$  is near $\Gamma$, we let $\hat x = (x_1, \dots, x_{n-2}, \sigma) = (x', \sigma)$ denote the Fermi coordinates around $\hat p\in\Gamma \subset \partial \Omega$, where $\sigma$ is the sign distance to $\Gamma$ such that $\{\hat x: \sigma < 0\}\subset \Sigma$ and $\{\hat x: \sigma > 0\}\subset \Sigma'$.
Let $\eta:\R_+\to [0, 1]$ be a smooth increasing cutoff function such that $\eta(t) = 0$ for $t\leq \frac12$ and $\eta(t) = 1$ for $t\geq 1$. Let $\varepsilon>0$ be small. 
Define the approximate distance function $\theta$ by
\begin{align*}
    \theta(\hat x, \tau) = \begin{cases}
        \eta\big(\tfrac{-\sigma}{\varepsilon \sqrt{\sigma^2 + \tau^2}}\big)\sqrt{\sigma^2 + \tau^2} + \Big(1- \eta\big(\tfrac{-\sigma}{\varepsilon\sqrt{\sigma^2 + \tau^2}}\big)\Big) \lvert \tau \rvert, & \mbox{if $\hat x = (x', \sigma)\in \Sigma$}\\
        \lvert \tau \rvert, & \mbox{if $\hat x\in \Sigma'$}
    \end{cases}
\end{align*}
in a neighborhood $V_\Omega\triangleq \{x\in \Omega\cup\Sigma: d_g(x)< 4 r_0\}$ of $\Sigma'$ for sufficiently small $r_0 > 0$ such that for any $x\in V_\Omega$,
\begin{align}
    (1 - \varepsilon) d_g(x) \leq  \theta(x) \leq (1 + \varepsilon) d_g(x), \label{eq: theta}
\end{align}
and moreover, for $j = 1, 2, \ldots,$
\begin{align}
    C_j^{-1} \leq \theta^{j-1}(x)\left| D^j \theta(x)\right| \leq C_j,\label{eq: d theta}
\end{align}
where $C_j$'s depend only on the cutoff function $\eta$ and the geometry of $\Gamma$ and $\partial \Omega$.
Therefore, for any $k>0$, the $\theta$-weighted H\"older norm is bounded:
\begin{align*}
    \|\theta\|_{C^k_{\theta}(V_\Omega)} \triangleq \sup_{x\in V_\Omega} \sum_{j=0}^k \theta^{j-1}(x) \left| D^j \theta(x)\right| \leq C_k,
\end{align*}
where $C_k$'s are uniform over $\mathcal{U}_0$. By shrinking $\mathcal{U}_0$, we can make this $\theta$ independent of the choice of $g\in \mathcal{U}_0$. Now the smaller neighborhood $\tilde{V}_\Omega \triangleq \{x\in \Omega\cup\Sigma: \theta(x)< r_0\}$ is foliated by smooth regular level sets of $\theta$. 
\begin{itemize}[leftmargin=*]
\item For $(\hat x, \tau)\in \tilde{V}_\Omega$ with $\hat x \in \Sigma'$, $\operatorname{Hess}(\theta) = \operatorname{Hess}(-\tau) \approx h_{\Sigma'}$ is uniformly bounded. 
\item For $(\hat x, \tau)\in \tilde{V}_\Omega$ with $\hat x \in \Sigma$, the level set of $\theta$ is approximately a cylinder around $\Gamma$, and the second fundamental form of the level set blows up at the rate of $\theta^{-1}$. Thus, unlike the uniform bound on $\operatorname{Hess}(d_g)$ in Corvino's setting \cite{C}, we can only require \eqref{eq: d theta} for $j = 2$.
\end{itemize}

We then define the weight function as a polynomial of $\theta$. Let $0 < r_1 < r_0$ be fixed. Define a smooth positive monotone function $\tilde{\rho}: (0, \infty) \rightarrow \mathbb{R}$ such that $\tilde{\rho}(t)=t$ for $t \in (0,r_1)$ and $\tilde{\rho}(t)=1$ for $t >r_0$. For $N > 0$, let $\rho$ be the positive function on $\Omega\cup\Sigma$ defined by
$$
\rho(x)=\left(\tilde{\rho} \circ \theta(x)\right)^{N}.
$$
We will eventually fix $N$ to be a suitably large number. We also denote $\Omega_\epsilon\subset\Omega$ as the region where $\theta > \epsilon$ for some small $\epsilon\ge0$, so that $\Omega_{r_0}\subset\Omega$ is a region where $\rho = 1$.

\subsection{Weighted Sobolev spaces}\label{weighted Sobolev spaces}
First, let us note a useful lemma from \cite{C-S} concerning the density of $H^k(\Omega)$ in $H_\rho^k(\Omega)$.
\begin{lem}[Corvino-Schoen \cite{C-S}*{Lemma 2.1}]\label{lem: density lemma}
Assume that $\rho$ is bounded from above. For $k \ge 1$, the subspace $H^k(\Omega)$ (and hence $C^\infty(\overline{\Omega})$) is dense in $H_\rho^k(\Omega)$.
\end{lem}
In fact, we can see that this density lemma holds in our setting as well.

Moreover, from (\ref{eq: theta}) and (\ref{eq: d theta}) we still have similar estimates on $\rho$ as in \cite{C-H}. In particular, we have the following proposition regarding weighted Sobolev norms.
\begin{prop}\label{ineq weighted Sobolev norms}
    Let $u\in H^k_{\rho}(\Omega)$. Then $u\rho^{\frac12} \in H^k(\Omega)$, and there is a constant $C$ uniform in $\mathcal{U}_0$ such that
$$
\left\|u \rho^{\frac12}\right\|_{H^k(\Omega)} \leq C \|u\|_{H^k_{\rho}(\Omega)}.
$$
\end{prop}
\begin{proof}
By direct computation and induction, we have
$$
\left|D^k\left(\rho^{\frac{1}{2}}\right)\right| \leq C N^k \rho^{\frac{1}{2}} \theta^{- k}.
$$
    For $0 < \theta(x) < r_1$, 
    \[
    \Delta \rho = N(N-1) \rho\left(\theta^{-2}|D\theta|^2 + (N-1)^{-1} \theta^{-1} \Delta \theta\right).
    \]
    If $r_2$ is sufficiently small, for $0 < \theta(x) \leq r_2$,
    $$
    \theta^{-2}|D\theta|^2 + (N-1)^{-1} \theta^{-1} \Delta \theta \ge  \theta^{-2}C_1^{-2} - (N-1)^{-1} C_2 \geq \frac12\theta^{-2}C^{-2}_1.
    $$
So we have 
$$
\frac{1}{2} N(N-1) C^{-2}_1\theta^{-2} \rho \leq \Delta \rho.
$$
Then similar argument as in \cite{C-H}*{Proposition 2.10} gives the desired estimate.
\end{proof}

Combined with the trace theorem \cite{L-M}*{p. 39} and the compactness theorem \cite{L-M}*{p. 99}, we have
\begin{cor}\label{trace cor}
Let $u\in H^k_{\rho}(\Omega)$. Then the trace map is a continuous linear map, denoted as
\begin{align*}
H^k(\Omega) & \rightarrow \prod_{j=0}^{k-1}H^{k- j -\frac12}(\partial\Omega);\\
u \rho^{\frac12} & \mapsto \prod_{j=0}^{k-1}\gamma_j(u \rho^{\frac12}).
\end{align*}
This map is surjective and there exists a continuous linear right inverse.
\end{cor}

\begin{cor}\label{compact cor}
Let $u_l\in H^k_{\rho}(\Omega)$ be a bounded sequence. Then for each $0\leq j\leq k-1$, there is a convergent subsequence of $\gamma_j(u_l \rho^{\frac12})$ in $H^{k- j - 1}(\partial\Omega)$.
\end{cor}

Let us further explore the properties of density and trace in weighted Sobolev spaces.
\begin{lem}\label{lem: density in L2}
    $C^\infty_c(\Omega\cup\Sigma)$ is dense in $L^2_\rho(\Omega)$.
\end{lem}
\begin{proof}
    Let $u\in L^2_\rho(\Omega)$, then $u\rho^{\frac12}\in L^2(\Omega)$. Since $C^\infty_c(\Omega\cup\Sigma)$ is dense in $L^2(\Omega)$, there exists a sequence $\varphi_j \in C^\infty_c(\Omega\cup\Sigma)$ such that $\varphi_j \to u\rho^{\frac12}$ in $L^2(\Omega)$. This implies that the sequence $\varphi_j \rho^{-\frac12}\in C^\infty_c(\Omega\cup\Sigma)$ converges to $u$ in $L^2_\rho(\Omega)$:
$$
        \left\|\varphi_j \rho^{-\frac12} - u\right\|_{L^2_\rho(\Omega)} = \left\|\varphi_j - u \rho^{\frac12}\right\|_{L^2(\Omega)} \longrightarrow 0.
$$
\end{proof}

\begin{prop}\label{lem: density cpt supp}
For any integer $k \geq 1$, $C^\infty_c(\Omega\cup\Sigma)$ is dense in $H^k_\rho(\Omega)$.
\end{prop}
\begin{proof}
By Lemma~\ref{lem: density lemma}, it suffices to show that $C^\infty(\overline{\Omega})$ lies in the closure of $C^\infty_c(\Omega\cup\Sigma)$ in $H^k_\rho(\Omega)$.
    Let $\varphi \in C^\infty(\overline{\Omega})$ and let $\eta:\R_+\to [0,1]$ be an increasing smooth cutoff function such that $\eta(t) = 0$ for $t\leq \frac14$ and $\eta(t) = 1$ for $t\geq \frac34$.
    
    Consider the sequence of functions $\varphi_j(x) \triangleq \eta\big(j \theta(x)\big) \varphi(x) \in C^\infty_c(\Omega\cup\Sigma)$. Then
$$
\|\varphi - \varphi_j\|^2_{L^2_\rho(\Omega)} = \int_\Omega (1 - \eta(j\theta))^2\varphi^2\rho \,d\mu_g \le \mu_g(K_j)\longrightarrow 0,
$$    
where $K_j$ is the support of $(1 - \eta(j\theta))$. It remains to show that the covariant derivatives of $(\varphi - \varphi_j)$ up to order $k$ converge to $0$ in $L^2_\rho(\Omega)$. For any multi-index $\alpha$ with $\lvert \alpha \rvert \leq k$,
$$
D_\alpha (\varphi_j - \varphi) = D_\alpha \left[(1 - \eta(j\theta))\varphi\right] = (1 - \eta(j\theta)) D_\alpha \varphi + \sum_{\lvert \beta \rvert = 1}^{\lvert \alpha \rvert} C_\beta D_{\beta}(1 - \eta(j\theta))  \cdot D_{(\alpha-\beta)} \varphi.  
$$
    Since $\varphi \in C^\infty(\overline\Omega)$, $D_\alpha \varphi$ and $D_{(\alpha-\beta)} \varphi$ are uniformly bounded. Thus the first term $(1 - \eta(j\theta)) D_\alpha \varphi$ converges to $0$ in $L^2_\rho(\Omega)$.  On the other hand, 
$$
D_{\beta}(1 - \eta(j\theta)) =   - \eta^{(\lvert \beta \rvert)}(j\theta)\cdot j^{\lvert \beta \rvert}\cdot D_{\beta}\theta.
$$ 
Using the fact that $\eta^{(\lvert \beta \rvert)}(j\theta)$ is bounded and supported in $\{x:\frac{1}{4j} \leq \theta(x) \leq \frac{3}{4j} \}$ and using estimates \eqref{eq: d theta}, we have for $N>4k-2$,
    \begin{align*}
        & \int_\Omega C^2_\beta \lvert D_{(\alpha-\beta)} \varphi\rvert^2 \lvert \eta^{(\lvert \beta \rvert)}(j\theta)\rvert^2 j^{2\lvert \beta \rvert} \lvert D_{\beta}\theta \rvert^2 \rho \,d\mu_g\\
        \leq & C \cdot \sup \lvert D_{(\alpha-\beta)} \varphi\rvert^2\cdot j^{2\lvert \beta \rvert} \int_{\{x:\frac{1}{4j} \leq \theta(x) \leq \frac{3}{4j} \}} \theta^{2-2\lvert \beta \rvert}\cdot\theta^N d\mu_g\\
        \leq & C \cdot \sup \lvert D_{(\alpha-\beta)} \varphi\rvert^2\cdot j^{4\lvert \beta \rvert - 2 - N} \longrightarrow 0,
    \end{align*}
where $C$ depends only on $\mu_g(\Omega)$, the constants in \eqref{eq: d theta} and the cutoff function $\eta$.
    Combining all the above estimates implies that $\|D_\alpha(\varphi_j - \varphi)\|_{L^2_\rho(\Omega)}$ converges to $0$ for all $\lvert \alpha \rvert \leq k$. Therefore, $\varphi_j$ converges to $\varphi$ in $H^k_\rho(\Omega)$.
\end{proof}

\begin{cor}\label{trace Sigma' zero}
For any integer $k\geq 1$, if $u\in H^k_\rho(\Omega)$, then $u\rho^{\frac12} \in H^k(\Omega, \Sigma')$. Thus for $0\leq j\leq k-1$,
$$
\gamma_j\left(u \rho^{\frac12}\right) = 0\quad\text{on }\Sigma'.
$$
\end{cor}
\begin{proof}
    Let $u\in H^k_\rho(\Omega)$. By Proposition~\ref{lem: density cpt supp}, there exists a sequence $\varphi_j\in C^\infty_c(\Omega\cup\Sigma)$ such that $\varphi_j\to u$ in $H^k_\rho(\Omega)$. By Proposition~\ref{ineq weighted Sobolev norms},
$$
\left\|\varphi_j \rho^{\frac{1}{2}} - u \rho^{\frac{1}{2}}\right\|_{H^k(\Omega)} \leq C \left\|\varphi_j - u\right\|_{H^k_\rho(\Omega)} \longrightarrow 0.
$$
Since $\varphi_j \rho^{\frac12} \in C^\infty_c(\Omega\cup\Sigma)$, this shows that $u\rho^{\frac12} \in H^k(\Omega, \Sigma')$.
\end{proof}

\begin{prop}\label{trace H0rho}
For any integer $k\geq 1$, if $u\in H^k_{0, \rho}(\Omega)$, then $u\rho^{\frac12} \in H_0^k(\Omega)$. Moreover, for $0\leq j\leq k-1$,
$$
\gamma_j\left(u \right) = 0\quad\text{on }\Sigma.
$$
Thus, the trace map extends by continuity to a continuous linear map of 
$$\left.\gamma_{j}\right|_\Sigma(u) : H^k_{0, \rho}(\Omega) \rightarrow H^{k- j -\frac12}(\Sigma).$$
\end{prop}
\begin{proof}
    Let $u\in H^k_{0, \rho}(\Omega)$. Then there exists a sequence $\varphi_j\in C^\infty_c(\Omega)$ such that $\varphi_j\to u$ in $H^k_\rho(\Omega)$. By Proposition~\ref{ineq weighted Sobolev norms},
$$
\left\|\varphi_j \rho^{\frac{1}{2}} - u \rho^{\frac{1}{2}}\right\|_{H^k(\Omega)} \leq C \left\|\varphi_j - u\right\|_{H^k_\rho(\Omega)} \longrightarrow 0.
$$
Since $\varphi_j \rho^{\frac12} \in C^\infty_c(\Omega)$, this shows that $u\rho^{\frac12} \in H_0^k(\Omega)$, and for $0\leq j\leq k-1$, $\gamma_j(u \rho^{\frac12}) = 0$ on $\Sigma$.

Note that the condition $\left.\gamma_0\right|_\Sigma(u \rho^{\frac12}) = 0$ immediately implies $\left.\gamma_0\right|_\Sigma(u) = 0$. Now, assume by induction that $\left.\gamma_{j}\right|_\Sigma(u) = 0$ for all $0\leq j\leq k-2$. Consider the next term:
\begin{align*}
0 & = \left.\gamma_{k-1}\right|_\Sigma\left(u \rho^{\frac12}\right) = \left.D^{k-1}_\nu\left(u \rho^{\frac12}\right)\right|_\Sigma\\
	& = \left.D^{k-1}_\nu u \cdot \rho^{\frac12}\right|_\Sigma + \left.C^{k-1}_1 D^{k-2}_\nu u \cdot D_\nu\left(\rho^{\frac12}\right)\right|_\Sigma + \dots + \left.u \cdot D^{k-1}_\nu\left(\rho^{\frac12}\right)\right|_\Sigma\\
	& = \left.D^{k-1}_\nu u \cdot \rho^{\frac12}\right|_\Sigma,
\end{align*}
where we used the inductive hypothesis and $|D^{k}_\nu(\rho^{\frac12})|\le CN^k\rho^{\frac12}\theta^{-k}\le CN^k\theta^{\frac{N}2-k}$.
Thus $\left.\gamma_{k-1}\right|_\Sigma(u) = 0$.

Therefore, by induction, $\gamma_j\left(u \right) = 0$ on $\Sigma$ for all $0\leq j\leq k-1$.
\end{proof}


Having established the inequality $\|u \rho^{\frac12}\|_{H^k(\Omega)} \leq C \|u\|_{H^k_{\rho}(\Omega)}$ for all $u \in H^k_{\rho}(\Omega)$ in Proposition~\ref{ineq weighted Sobolev norms}, we now turn to the question of norm equivalence. This leads us to investigate the reverse inequality. Specifically, we would like to show that if $w \in H^1(\Omega, \Sigma')$, then $w \rho^{-\frac12}\in H^1_{\rho}(\Omega)$, and
$$
\left\|w \rho^{-\frac12}\right\|_{H^1_{\rho}(\Omega)} \le C\|w\|_{H^1(\Omega)}.
$$
Note that
\begin{align*}
\left\|w \rho^{-\frac12}\right\|^2_{H^1_{\rho}(\Omega)} & = \int_\Omega w^2d \mu_g + \int_\Omega \left|D \left(w \rho^{-\frac12}\right)\right|^2\rho\,d \mu_g\\
	& = \int_\Omega w^2d \mu_g + \int_\Omega \left|D w - \frac{N}2w\theta^{-1}D\theta\right|^2d \mu_g\\
	& \le \int_\Omega w^2d \mu_g + 2\int_\Omega |D w|^2d \mu_g + \frac{N^2(1+\epsilon)^2}2\int_\Omega w^2\theta^{-2}d \mu_g.
\end{align*}
So it suffices to show 
$$\int_\Omega w^2\theta^{-2}d \mu_g \le C\|w\|^2_{H^1(\Omega)},$$
or equivalently, 
\begin{equation}\label{Hardy}
\int_{\Omega} u^{2} \theta^{-2} \rho \leq C\left\|u \rho^{\frac12}\right\|_{H^{1}(\Omega)}^{2}
\end{equation}
for $u \in H^1_{\rho}(\Omega)$.

The validity of inequality (\ref{Hardy}) is intimately connected to Hardy's inequality and, unfortunately, is not guaranteed. It depends critically on the geometric relationship between the approximate distance $\theta$ and the domain's boundary. Specifically:
\begin{itemize}[leftmargin=*]
\item If $\theta$ is equivalent to the distance to the \textbf{entire boundary} $\partial\Omega$, then (\ref{Hardy}) holds for all $u \in H^1_{\rho}(\Omega)$. This is the setting considered by Corvino \cite{C}, which differs from ours.
\item Likewise, (\ref{Hardy}) holds for $u \in H^1_{\rho}(\Sigma)$ if $\theta$ is equivalent to the distance to the entire boundary $\partial\Sigma$.
\item In contrast, if $\theta$ is equivalent to the distance to only a \textbf{partial boundary} $\Sigma'$, then (\ref{Hardy}) may fail for general $u \in H^1_{\rho}(\Omega)$. The failure occurs near the boundary $\Gamma \triangleq \partial\Sigma'$ of the partial boundary itself. This is a codimension-2 singularity, which induces the critical case where Hardy's inequality no longer holds (see \cite{Ku84}*{p. 35}). Nevertheless, the inequality remains valid for functions $u \in H^1_{0, \rho}(\Omega)$ that are supported away from the full boundary $\partial\Omega$.
\end{itemize}

The proofs of the preceding three results follow analogous arguments, relying on the following Hardy's inequality for $F\in C^1_c((0, \infty))$:
$$
\int_0^\infty t^{-2}F^2(t)dt \le 4\int_0^\infty \left|F'(t)\right|^2dt.
$$
We now summarize results (ii) and (iii) into the following propositions for subsequent reference. For brevity, we prove only Proposition~\ref{inverse ineq weighted Sobolev norms 2}.


\begin{prop}\label{inverse ineq weighted Sobolev norms 1}
Let $u\in H^k_{\rho}(\Sigma)$. Then there is a constant $C$ uniform in $\mathcal{U}_0$ such that
$$
C^{-1} \|u\|_{H^k_{\rho}(\Sigma)} \le \left\|u \rho^{\frac12}\right\|_{H^k(\Sigma)} \leq C \|u\|_{H^k_{\rho}(\Sigma)}.
$$
\end{prop}

\begin{prop}\label{inverse ineq weighted Sobolev norms 2}
Let $u\in H^k_{0, \rho}(\Omega)$. Then there is a constant $C$ uniform in $\mathcal{U}_0$ such that
$$
C^{-1} \|u\|_{H^k_{\rho}(\Omega)} \le \left\|u \rho^{\frac12}\right\|_{H^k(\Omega)} \leq C \|u\|_{H^k_{\rho}(\Omega)}.
$$
\end{prop}
\begin{proof}
In this proof, we denote by $\bar{d}$ the distance to the \textbf{entire boundary} $\partial\Omega$ in $\tilde{V}_\Omega$. Note that $\bar{d}\le\theta$ in $\tilde{V}_\Omega$.

Let us denote $w=u \rho^{\frac12}$, and by density, we may assume $w \in C^\infty_c(\Omega)$ with $\operatorname{supp}(w)\subset\{\bar{d}\ge\epsilon\}$. For fixed $\delta>0$ depending only on $\Omega$, we have
\begin{equation}\label{Hardy est}
\int_{\{\bar{d}\ge\delta\}} w^2\theta^{-2}d \mu_g \le \delta^{-2}\int_{\{\bar{d}\ge\delta\}} w^2d \mu_g\le \delta^{-2}\int_{\Omega} w^2d \mu_g.
\end{equation}
Thus, it suffices to consider the region where $\operatorname{supp}(w)\subset\{\epsilon \le \bar{d} \le \delta\}$. 

By the co-area formula,
$$
\int_{\{\epsilon \le \bar{d} \le \delta\}} w^2\theta^{-2}d \mu_g = \int^\delta_\epsilon\int_{\{\bar{d}=t\}}w^2\theta^{-2}d\sigma_t dt \le \int^\delta_\epsilon t^{-2}\left(\int_{\{\bar{d}=t\}}w^2d\sigma_t\right) dt,
$$
where $d\sigma_t$ denotes the surface measure on the level set $\{\bar{d}=t\}$. Denote $F(t) = \left(\int_{\{\bar{d}=t\}}w^2d\sigma_t\right)^{\frac12}\in C^1_c((0, \delta])$. Then
$$
F'(t) = \frac12\left(\int_{\{\bar{d}=t\}}w^2d\sigma_t\right)^{-\frac12}\left(\int_{\{\bar{d}=t\}} 2w\p_t w d \sigma_t+\int_{\{\bar{d}=t\}} w^2 H_t d \sigma_t\right),
$$
where $H_t$ denotes the mean curvature of the level set $\{\bar{d}=t\}$. Note that $H_t$ is uniformly bounded in $\{\epsilon \le \bar{d} \le \delta\}$. Hence, by Hardy’s inequality,
\begin{align*}
& \int_{\{\epsilon \le \bar{d} \le \delta\}} w^2\theta^{-2}d \mu_g\\
\le & \int^\delta_\epsilon \left(\int_{\{\bar{d}=t\}}w^2d\sigma_t\right)^{-1}\left(\int_{\{\bar{d}=t\}} 2w\p_t w d \sigma_t+\int_{\{\bar{d}=t\}} w^2 H_t d \sigma_t\right)^2 dt\\
\le & 2\int^\delta_\epsilon \left(\int_{\{\bar{d}=t\}}w^2d\sigma_t\right)^{-1} \left[\left(\int_{\{\bar{d}=t\}} 2w\p_t w d \sigma_t\right)^2+\left(\int_{\{\bar{d}=t\}} w^2 H_t d \sigma_t\right)^2\right] dt\\
\le & 2\int_\epsilon^\delta \left(\int_{\{\bar{d}=t\}} 4\left(\partial_t w\right)^2+w^2 H_t^2 d \sigma_t\right) d t\\
\le & C\int_{\{\epsilon \le \bar{d} \le \delta\}}\left(|D w|^2 + w^2 \right) d \mu_g\\
\le & C\int_\Omega\left(|D w|^2 + w^2 \right) d \mu_g.
\end{align*}
Combining this with (\ref{Hardy est}), we conclude
$$\int_{\Omega} w^2\theta^{-2}d \mu_g \le C\int_\Omega\left(w^2 + |D w|^2 \right) d \mu_g,$$
which is equivalent to inequality (\ref{Hardy}).

Estimates for the higher-order derivatives can be obtained by induction.
\end{proof}

\begin{figure}
\begin{center}
\begin{tikzpicture}[>=stealth] 

    \draw[->] (-3.5, 0) -- (6.5, 0);
    \node[below] at (6.5, 0) {$\Sigma'$};
    \node[below] at (-3.5, 0) {$\Sigma$};

    \draw[->] (0, -0.5) -- (0, 4.2) node[left] {$\Omega$};

    \fill (0,0) circle (2pt) node[below left] {$\Gamma$};

    \draw[thick] (-3,0) arc (180:90:3); 
    \draw[thick] (0,3) -- (6.3,3) node[right] {$\theta$}; 

    \draw[dashed, thick] (-3,1.5) -- (6.3,1.5) node[right] {$\bar{d}$};

\end{tikzpicture}
\caption{Level sets of $\theta$ and $\bar{d}$ in Fermi coordinates}
\label{fig: level sets}
\end{center}
\end{figure}

This proof also shows that inequality (\ref{Hardy}) fails for general $u\in C^\infty_c(\Omega\cup\Sigma)$ in the partial boundary case. The application of Hardy’s inequality requires the foliation $\{\theta = t\}$, but the mean curvature of these level sets blows up near $\Gamma$.

A direct consequence is a precise characterization of the space $H^k_{0, \rho}(\Omega)$.
\begin{cor}[Characterization of $H^k_{0, \rho}(\Omega)$]\label{H0rho}
For any integer $k\geq 1$, 
$$H^k_{0, \rho}(\Omega) = \left\{u\in H^k_{\rho}(\Omega): \left.\gamma_{j}\right|_\Sigma\left(u\rho^{\frac12}\right) = 0,\,\text{ for }0\leq j\leq k-1\right\}.$$
\end{cor}
\begin{proof}
Since Proposition~\ref{trace H0rho} establishes one inclusion, it suffices to prove the converse:
$$H^k_{0, \rho}(\Omega) \supset \left\{u\in H^k_{\rho}(\Omega): \left.\gamma_{j}\right|_\Sigma\left(u\rho^{\frac12}\right) = 0,\,\text{ for }0\leq j\leq k-1\right\}.$$

Suppose $u \in H^k_{\rho}(\Omega)$ satisfies $\left.\gamma_{j}\right|_\Sigma(u\rho^{\frac12}) = 0$ for $0 \le j \le k-1$. Corollary~\ref{trace Sigma' zero} implies $\left.\gamma_{j}\right|_{\Sigma'}(u \rho^{\frac12}) = 0$. Hence, the trace $\gamma_{j}(u \rho^{\frac12})$ vanishes on the entire boundary, yielding
$$
u \rho^{\frac12} \in H^k_{0}(\Omega).
$$

Consequently, there exists a sequence $w_j\in C^\infty_c(\Omega)$ such that $w_j\to u \rho^{\frac12}$ in $H^k(\Omega)$. By the proof of Proposition~\ref{inverse ineq weighted Sobolev norms 2}, we have
$$
\left\|w_j \rho^{-\frac12} - u\right\|_{H^k_{\rho}(\Omega)} \le C\left\|w_j - u \rho^{\frac12}\right\|_{H^k(\Omega)}\longrightarrow0.
$$
Since each $w_j \rho^{-\frac12}$ belongs to $C^\infty_c(\Omega)$, this implies $u \in H^k_{0,\rho}(\Omega)$.
\end{proof}

Finally, we would like to remark that this norm equivalence is a principal reason for choosing the power weight over the exponential weight considered in \cite{C-H}, as the same argument fails for exponential weights.

\subsection{Weighted H\"older spaces}\label{weighted Holder space}
In this subsection, we define the weighted H\"older space and discuss several of its key properties. Following the idea of \cite{C-D}, we consider weighted H\"older norms in small balls $B_{\phi(x)}(x)$ that cover $\Omega$. The weight function $\phi = \phi_g$ satisfies certain uniform estimates for all metrics $g$ in a $\mathcal{C}^{k+4}(\overline{\Omega})$ neighborhood $\mathcal{U}_0$. Recall the neighborhood $V_\Omega$ from Section~\ref{Weight function}, and suppose we have chosen a suitable $N$. 

As noted in \cite{C-H}*{Proposition 2.11}, the weight function enjoys useful interior properties. With minor modifications, these properties extend naturally to our current setting:
\begin{prop}\label{C-H estimate}
For $g \in \mathcal{U}_0$, we define $\phi(x) = \frac12\theta(x)$ in $V_\Omega$. There exists a constant $C > 0$, uniform across $\mathcal{U}_0$, such that we can extend $\phi$ to $\Omega$ with $0 < \phi < 1$ and with the following properties.\\
(i) $\phi$ has a positive lower bound on $\Omega\backslash V_\Omega$ uniformly in $g\in\mathcal{U}_0$, and for each $x$, $\phi(x)<d(x)$, so that $\overline{B_{\phi(x)}(x)}\subset\Omega$.\\
(ii) For $x\in\Omega$ and $k\le m$, we have $\left|\phi^{k} \rho^{-1} D^{k} \rho\right| \leq C.$\\
(iii) For $x\in\Omega$ and for $y\in B_{\phi(x)}(x)$, we have
\begin{align*}\begin{aligned}
    C^{-1}\rho(y)&\le\rho(x)\le C\rho(y)\\
    C^{-1}\phi(y)&\le\phi(x)\le C\phi(y).
\end{aligned}\end{align*}
\end{prop}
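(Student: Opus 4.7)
The plan is to work almost entirely in the collar $V_\Omega$, where $\phi$ and $\rho$ have explicit expressions, and to extend $\phi$ smoothly outside $V_\Omega$ by a positive constant. Concretely, fix $0<c_0<r_0$ and pick a smooth cutoff $\eta$ that is a function of $d_g$, equal to $1$ on $V_\Omega$ and equal to $0$ where $d_g$ exceeds some fixed constant slightly larger than $r_0$; then set $\phi=\eta\,d_g^2+(1-\eta)c_0$. Because $\mathcal{U}_0$ consists of uniformly equivalent metrics with $\|d_g\|_{C^m}$ uniformly controlled near $\partial\Omega\setminus\Sigma$, the cutoff and the resulting $\phi$ are controlled uniformly in $g\in\mathcal{U}_0$, and $\phi\equiv d_g^2$ on $V_\Omega$ as required. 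Property (i) is then immediate: on $V_\Omega$ one has $\phi=d^2<r_0 d<\tfrac12 d$ using $r_0<\tfrac12$; outside $V_\Omega$ one has $d\ge r_0>c_0$ and the convex combination gives $\phi\le\max(d^2,c_0)<d$ after shrinking $c_0$ if necessary; and $\phi\ge c_0$ on $\Omega\setminus V_\Omega$, uniformly in $g$.

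For (ii), outside $\{d<r_1\}$ the function $\rho$ is smooth with $0<\rho\le 1$ and uniform $C^m$-bounds, while $\phi$ is uniformly positive, so both factors are trivially controlled. The substantive estimate is in $\{d<r_1\}$, where I would start from $\log\rho=-N/d$ and first show by induction on $j$ that $|D^j\log\rho|\le C_j/d^{j+1}$: each derivative of $-N/d$ is a universal polynomial in $Dd,\ldots,D^j d$ with coefficients involving inverse powers of $d$, and one uses the uniform bound $\|d_g\|_{C^m}\le C$. Next, Fa\`a di Bruno applied to $\rho=e^{\log\rho}$ writes $\rho^{-1}D^k\rho$ as a universal polynomial in $D\log\rho,\ldots,D^k\log\rho$, each monomial of the form $D^{j_1}\log\rho\cdots D^{j_m}\log\rho$ with $j_1+\cdots+j_m=k$ and $1\le m\le k$. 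Such a monomial is bounded by $C/d^{(j_1+1)+\cdots+(j_m+1)}=C/d^{k+m}\le C/d^{2k}=C/\phi^k$, so $|\phi^k\rho^{-1}D^k\rho|\le C$ with constants depending only on $N$, $k$, and the uniform $C^m$-bound for $d_g$.

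For (iii), in $V_\Omega\cap\{d(x)<r_1\}$ the ball $B_{\phi(x)}(x)$ has radius $\phi(x)=d(x)^2\le r_0 d(x)<d(x)/2$, so the triangle inequality gives $d(x)/2<d(y)<3d(x)/2$ for every $y\in B_{\phi(x)}(x)$. The comparison $\phi(y)=d(y)^2\in[\phi(x)/4,\,9\phi(x)/4]$ is then immediate, and for $\rho$ one computes
\begin{equation*}
\bigl|\log\rho(y)-\log\rho(x)\bigr|=N\,\frac{|d(y)-d(x)|}{d(x)d(y)}\le N\cdot\frac{d(x)^2}{d(x)^2/2}=2N,
\end{equation*}
which yields $e^{-2N}\rho(x)\le\rho(y)\le e^{2N}\rho(x)$. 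Outside $V_\Omega$, both $\phi$ and $\rho$ are bounded above and below, uniformly in $g\in\mathcal{U}_0$, on a compact set, so the comparisons hold with a (possibly larger) uniform constant. I expect the main technical subtlety to lie not in any single pointwise estimate but in the bookkeeping that renders all constants uniform across $g\in\mathcal{U}_0$; this reduces to the standing assumption that $d_g$ is $C^m$-controlled near $\partial\Omega\setminus\Sigma$ and that the metrics are uniformly equivalent on $\Omega$, together with care that the cutoff defining the extension of $\phi$ is chosen independently of $g$.
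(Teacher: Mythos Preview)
The paper does not give its own proof of this proposition; it is quoted verbatim from Corvino--Huang \cite{C-H} and used as a black box. Your argument is correct and is essentially the standard proof one finds in that reference: define $\phi=d^2$ in the collar and extend by a constant via a cutoff, use $d<r_0<\tfrac12$ to get $\phi<d$ in the collar, use Fa\`a di Bruno on $\rho=e^{-N/d}$ to reduce (ii) to bounds on $D^j(1/d)$, and use the $1$-Lipschitz property of $d$ together with $\phi(x)=d(x)^2<\tfrac12 d(x)$ to get the Harnack-type comparisons in (iii). One small point worth tightening: in (iii) you restrict to $\{d(x)<r_1\}$ and conclude $d(y)<\tfrac32 d(x)<\tfrac32 r_1$, but to ensure $y$ still lies in the region where $\phi=d^2$ and $\rho=e^{-N/d}$ you should note (or arrange) that $\tfrac32 r_1<r_0$, and then handle the intermediate shell $\{r_1\le d<r_0\}$ together with $\Omega\setminus V_\Omega$ by the uniform positivity argument. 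This is routine, but as written the case split leaves a thin annulus unaddressed.
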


Let $r, s\in \mathbb{R}$ and $\varphi = \phi^r\rho^s$. For $u \in C_{\mathrm{loc}}^{k, \alpha}(\Omega)$, we define the weighted H\"older norm following \cite{C-H} as
$$
\|u\|_{C_{\phi, \varphi}^{k, \alpha}(\Omega)} \triangleq \sup _{x \in \Omega}\left(\sum_{j=0}^{k} \varphi(x) \phi^{j}(x)\left\|D_{g}^{j} u\right\|_{C^{0}\left(B_{\phi(x)}(x)\right)}+\varphi(x) \phi^{k+\alpha}(x)\left[D_{g}^{k} u\right]_{0, \alpha ; B_{\phi(x)}(x)}\right).
$$
The weighted H\"older space $C_{\phi, \varphi}^{k, \alpha}(\Omega)$ consists of all $C_{\mathrm{loc}}^{k, \alpha}(\Omega)$ functions or tensor fields for which this norm is finite. If $u\in C_{\phi, \varphi}^{k, \alpha}(\Omega)$, then $u$ is dominated by $\varphi^{-1}$ in the sense that $u = O(\varphi^{-1})$ and $D^j u = O(\varphi^{-1}\phi^{-j})$ near the boundary.

\begin{prop}[Corvino-Huang \cite{C-H}]\label{operator Holder}
We have some properties for weighted H\"older spaces:
\begin{itemize}
\item[(i)] The differentiation is a continuous map:
$$
D: C_{\phi, \varphi}^{k, \alpha}(\Omega) \to C_{\phi, \phi\varphi}^{k-1, \alpha}(\Omega).
$$
\item[(ii)] For $u\in C_{\phi, \varphi}^{k, \alpha}(\Omega)$, $v\in C^{k, \alpha}(\overline{\Omega})$, we have $uv\in C_{\phi, \varphi}^{k, \alpha}(\Omega)$ and $C=C(k)$ with
$$
\|u v\|_{C_{\phi, \varphi}^{k, \alpha}(\Omega)} \leq C\|u\|_{C_{\phi, \varphi}^{k, \alpha}(\Omega)}\|v\|_{C^{k, \alpha}(\overline{\Omega})}.
$$
\item[(iii)] The multiplication by $\rho$ is a continuous map from $C_{\phi, \varphi}^{k, \alpha}$ to $C_{\phi, \varphi\rho^{-1}}^{k, \alpha}$.
\end{itemize}
\end{prop}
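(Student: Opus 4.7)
The plan is to prove each of the three statements directly from the definition of $\|\cdot\|_{C^{k,\alpha}_{\phi,\varphi}(\Omega)}$, using $0<\phi<1$, the Leibniz rule, and the pointwise bounds on $\rho$ and $\phi$ supplied by Proposition \ref{C-H estimate}. The unifying observation is that the norm is assembled from data on the balls $B_{\phi(x)}(x)$, on which both $\phi$ and $\rho$ are comparable to their central values by Proposition \ref{C-H estimate}(iii); once this comparability is in hand, each estimate reduces to a pointwise combinatorial manipulation of the Leibniz expansion.

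For (i), I write out $\|Du\|_{C^{k-1,\alpha}_{\phi,\phi\varphi}(\Omega)}$ and observe that the extra $\phi$ in the weight combined with the factors $\phi^j$ for $0\le j\le k-1$ and $\phi^{k-1+\alpha}$ yields precisely the factors $\varphi\phi^{j+1}$ and $\varphi\phi^{k+\alpha}$; these match the summands of $\|u\|_{C^{k,\alpha}_{\phi,\varphi}(\Omega)}$ for indices $1,\dots,k$, so continuity of $D$ with unit norm is immediate. For (ii), I apply the Leibniz rule $D^j(uv)=\sum_{l=0}^{j}\binom{j}{l}\,D^l u\otimes D^{j-l}v$, multiply by $\varphi\phi^j$, and absorb $\phi^{j-l}\le 1$ into $\|D^{j-l}v\|_{C^0(B_{\phi(x)}(x))}\le\|v\|_{C^{k,\alpha}(\bar\Omega)}$, while $\varphi\phi^l\|D^l u\|_{C^0(B_{\phi(x)}(x))}\le\|u\|_{C^{k,\alpha}_{\phi,\varphi}}$. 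For the H\"older seminorm I use the standard product rule $[fg]_{\alpha;B}\le \|f\|_{C^0(B)}[g]_{\alpha;B}+[f]_{\alpha;B}\|g\|_{C^0(B)}$, split $[D^k(uv)]_{0,\alpha;B_{\phi(x)}(x)}$ into products of a $C^0$ norm and a H\"older seminorm, and repeat the same absorption; summing the $\binom{k}{l}$ contributions supplies the constant $C=C(k)$.

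For (iii), Leibniz gives $D^j(\rho u)=\sum_{l=0}^{j}\binom{j}{l}D^l\rho\otimes D^{j-l}u$, and Proposition \ref{C-H estimate}(ii) yields $\phi^l|D^l\rho|\le C\rho$ pointwise. Multiplying through by $\varphi\rho^{-1}\phi^j$ and evaluating at $y\in B_{\phi(x)}(x)$ produces
\[
\varphi(x)\rho(x)^{-1}\phi(x)^j|D^j(\rho u)(y)|\le C\sum_{l=0}^{j}\binom{j}{l}\varphi(x)\rho(x)^{-1}\rho(y)\phi(x)^{j-l}|D^{j-l}u(y)|.
\]
Proposition \ref{C-H estimate}(iii) then allows me to replace $\rho(y)$ by $\rho(x)$ and $\phi(x)^{j-l}$ by $\phi(y)^{j-l}$ up to a uniform constant, leaving a bound of the form $C\sum_l\binom{j}{l}\varphi(x)\phi(y)^{j-l}|D^{j-l}u(y)|\le C\|u\|_{C^{k,\alpha}_{\phi,\varphi}}$. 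The H\"older seminorm piece is handled by the same product-rule-and-comparison argument, noting that the oscillation of $\rho$ on $B_{\phi(x)}(x)$ is controlled by $\phi(x)\sup_{B}|D\rho|\le C\rho(x)$, which in particular gives $[\rho]_{0,\alpha;B_{\phi(x)}(x)}\le C\rho(x)\phi(x)^{-\alpha}$ and fits correctly into the weighted count.

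I do not anticipate any genuine analytic obstacle; the argument is essentially bookkeeping of $\phi$-powers and constants from Proposition \ref{C-H estimate}. The subtlest point is the H\"older seminorm in (iii), where both $[\rho D^{j-l}u]_{0,\alpha}$ and $[D^l\rho\cdot D^{j-l}u]_{0,\alpha}$ must be controlled in terms of $\|u\|_{C^{k,\alpha}_{\phi,\varphi}}$; this is where the full force of the uniform bound $|\phi^k\rho^{-1}D^k\rho|\le C$ up to the required order, together with the tuning of the ball radius to $\phi(x)$, is used.
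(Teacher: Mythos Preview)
Your proof is correct. The paper does not actually prove this proposition; it is quoted verbatim from Corvino--Huang \cite{C-H} and stated without argument, so there is nothing to compare against in the paper itself. Your direct verification from the definition of the weighted H\"older norm---using $0<\phi<1$, the Leibniz rule, the pointwise bounds $|\phi^{l}\rho^{-1}D^{l}\rho|\le C$, and the comparability of $\rho$ and $\phi$ across each ball $B_{\phi(x)}(x)$ from Proposition~\ref{C-H estimate}---is exactly the expected route and is how the result is obtained in \cite{C-H}.

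One small cosmetic remark on your treatment of (iii): after bounding $|D^{l}\rho(y)|\le C\rho(y)\phi(y)^{-l}$ and replacing $\phi(y)^{-l}$ by $C\phi(x)^{-l}$, there is no need to convert $\phi(x)^{j-l}$ back to $\phi(y)^{j-l}$. The quantity $\varphi(x)\phi(x)^{j-l}\|D^{j-l}u\|_{C^{0}(B_{\phi(x)}(x))}$ is already one of the summands in the definition of $\|u\|_{C^{k,\alpha}_{\phi,\varphi}}$ at the center $x$, so the bound follows immediately. Your detour through $\phi(y)$ is harmless but superfluous.
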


We now extend this framework to the setting $\Omega\cup\Sigma$, particularly near the boundary $\Sigma$. Again, we consider weighted H\"older norms in small balls $B_{\phi(x)}(x)$ that cover $\Omega\cup\Sigma$. It is easy to see that the same properties continue to hold, with $\Omega$ replaced by $\Omega\cup\Sigma$, and $B_{\phi(x)}(x)$ replaced by $B^+_{\phi(x)}(x) \triangleq B_{\phi(x)}(x)\cap(\Omega\cup\Sigma)$. 

For $u \in C_{\mathrm{loc}}^{k, \alpha}(\Omega\cup\Sigma)$, we define its weighted H\"older norm $\|u\|_{C_{\phi, \varphi}^{k, \alpha}(\Omega\cup\Sigma)}$ by
$$
\sup _{x \in \Omega\cup\Sigma}\left(\sum_{j=0}^{k} \varphi(x) \phi^{j}(x)\left\|D_{g}^{j} u\right\|_{C^{0}\left(B^+_{\phi(x)}(x)\right)}+\varphi(x) \phi^{k+\alpha}(x)\left[D_{g}^{k} u\right]_{0, \alpha ; B^+_{\phi(x)}(x)}\right).
$$

\begin{rmk}
With this norm, the properties stated in Propositions \ref{C-H estimate} and \ref{operator Holder} continue to hold near the boundary $\Sigma$.
\end{rmk}

For integer $k\ge0$, we use the following Banach spaces for functions or tensor fields:
\begin{align*}
    \mathcal{B}_{0}(\Omega) & =C_{\phi, \phi^{4+\frac{n}{2}} \rho^{-\frac{1}{2}}}^{k, \alpha}(\Omega\cup\Sigma) \cap \left(H_{\nu, \rho}^{2}(\Omega)\right)^*,\\
   \mathcal{B}_{2}(\Omega) & =C_{\phi, \phi^{2+\frac{n}{2}} \rho^{-\frac{1}{2}}}^{k+2, \alpha}(\Omega\cup\Sigma) \cap L_{\rho^{-1}}^{2}(\Omega),\\
   \mathcal{B}_{4}(\Omega) & =C_{\phi, \phi^{\frac{n}{2}} \rho^{\frac{1}{2}}}^{k+4, \alpha}(\Omega\cup\Sigma) \cap H_{\rho}^{2}(\Omega),
\end{align*}
with the Banach norms:
\begin{align*}
    \|u\|_{\mathcal{B}_{0}(\Omega)} & =\|u\|_{C^{k, \alpha}_{\phi, \phi^{4+\frac{n}{2}}\rho^{-\frac{1}{2}}}(\Omega\cup\Sigma)}+\|u\|_{\left(H_{\nu, \rho}^{2}(\Omega)\right)^*},\\
    \|u\|_{\mathcal{B}_{2}(\Omega)} & =\|u\|_{C^{k+2, \alpha}_{\phi, \phi^{2+\frac{n}{2}}\rho^{-\frac{1}{2}}}(\Omega\cup\Sigma)}+\|u\|_{L_{\rho^{-1}}^{2}(\Omega)},\\
     \|u\|_{\mathcal{B}_{4}(\Omega)} & =\|u\|_{C^{k+4, \alpha}_{\phi, \phi^{\frac{n}{2}}\rho^{\frac{1}{2}}}(\Omega\cup\Sigma)}+\|u\|_{H_{\rho}^{2}(\Omega)}.
\end{align*}
It is clear that these Banach spaces contain the smooth functions with compact supports in $\Omega\cup\Sigma$. We can similarly define Banach spaces $\mathcal{B}_l(\Sigma)$, but note that the weight and the regularity may be different on the boundary. 
Throughout this paper, we will only use 
$$
\mathcal{B}_{1}(\Sigma) =C_{\phi, \phi^{3+\frac{n}{2}} \rho^{-\frac{1}{2}}}^{k+1, \alpha}(\Sigma) \cap \mathcal{D}^*.
$$
We will clarify the definitions of the spaces $\mathcal{D}$ and $\mathcal{D}^*$ more clear in Section~\ref{sec4}.

We will sometimes use the product norms, e.g.
$$
    \|(f, \psi)\|_{\mathcal{B}_{0}(\Omega)\times\mathcal{B}_{1}(\Sigma)} = \|f\|_{\mathcal{B}_{0}(\Omega)} + \|\psi\|_{\mathcal{B}_{1}(\Sigma)}.
$$


\subsection{A basic lemma}
Finally, we would like to introduce a basic lemma, which will be used frequently in later discussions. Also recall in \cite{C}*{Proposition 2.5} Corvino found that $H_{\mathrm{loc}}^{2}(\Omega)$ elements in $\operatorname{ker} L^*$ are actually in $C^2(\overline{\Omega})$.
\begin{lem}\label{no kernel}
There is no non-trivial solution in $H_{\operatorname{loc}}^{2}(\Omega)$ to the following equations:
$$
\left\{\begin{aligned} 
L^*u & = 0 \quad&\text{in } &\Omega\\ 
u = u_\nu & = 0 &\text{on } &\Sigma.
\end{aligned}\right.
$$
\end{lem}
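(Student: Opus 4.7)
The plan is to reduce the overdetermined tensor equation $L^*u = 0$ to a single second-order scalar elliptic equation for $u$ and then invoke unique continuation from the hypersurface $\Sigma$ on which the Cauchy data vanish.

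First, I would take the $g$-trace of $L^*u=0$. Since
$$
\operatorname{tr}_{g} L^*u \,=\, -n\Delta u + \Delta u - uR \,=\, -(n-1)\Delta u - Ru,
$$
the equation $L^*u=0$ forces the scalar equation
$$
\Delta u + \tfrac{R}{\,n-1\,}\,u \,=\, 0 \qquad \text{in } \Omega.
$$
Since $g_0 \in C^{4,\alpha}$ gives $R \in C^{2,\alpha}$, this equation has Lipschitz (in fact $C^{2,\alpha}$) coefficients, so Aronszajn's strong unique continuation theorem applies. Note that we do \emph{not} actually use the full Hessian information from $L^*u=0$; the trace alone is enough to drive the argument.

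Second, I would use the boundary conditions $u = u_\nu = 0$ on $\Sigma$ to convert the Cauchy problem on $\Omega$ into an interior unique-continuation problem in $M$. By the regularity observation of Corvino cited just before the statement, $u \in C^2(\overline{\Omega})$. Pick any interior point $p \in \Sigma \setminus \partial\Sigma$ and a tubular neighborhood $U \subset M$ of $p$ meeting both sides of $\Sigma$, and define $\tilde u$ on $U$ by $\tilde u = u$ on $U \cap \Omega$ and $\tilde u = 0$ on $U \setminus \overline{\Omega}$. The two vanishing Cauchy conditions guarantee that $\tilde u$ is $C^{1,1}$ across $\Sigma$, hence lies in $H^2_{\mathrm{loc}}(U)$. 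A standard integration-by-parts against test functions $\varphi \in C^\infty_c(U)$ shows that $\tilde u$ is a weak solution of $\Delta \tilde u + \tfrac{R}{n-1}\tilde u = 0$ on all of $U$: the surface contributions over $\Sigma$ involve only $u\,\varphi_\nu$ and $u_\nu\,\varphi$ and therefore cancel.

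Third, since $\tilde u \equiv 0$ on the nonempty open set $U \setminus \overline{\Omega}$, Aronszajn's theorem forces $\tilde u \equiv 0$ on the connected component of $U$ containing $p$. Therefore $u$ vanishes on a nonempty open subset of $\Omega$ adjacent to $\Sigma$, and a final application of interior unique continuation, together with the hypothesis that $\Omega$ is connected, yields $u \equiv 0$ throughout $\Omega$.

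I expect the subtlest step to be the weak-solution verification across $\Sigma$: this is precisely where both Cauchy conditions (and not only $u=0$) are essential, and where one must localize at interior points of $\Sigma$ in order to avoid the possibly nonempty lower-dimensional set $\partial\Sigma$. Since $\Sigma$ is a smooth connected $(n-1)$-dimensional submanifold, its manifold interior $\Sigma \setminus \partial \Sigma$ is open and nonempty, so unique continuation from a single interior point is enough to initiate the propagation.
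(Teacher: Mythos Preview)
Your reduction to the trace equation $(n-1)\Delta u + Ru = 0$ and the idea of propagating vanishing from the Cauchy data on $\Sigma$ are both correct, and this is exactly how the paper closes the argument. However, your middle step has a genuine gap in the present geometric setting: in this paper $\Sigma = \partial\Omega \cap \partial M$ lies in the boundary of the ambient manifold, so near an interior point $p\in\Sigma$ the sets $\partial\Omega$ and $\partial M$ coincide and $\Omega$ already fills all of $M$. Consequently there is no tubular neighborhood $U\subset M$ ``meeting both sides of $\Sigma$''; the set $U\setminus\overline{\Omega}$ on which you place the zero extension is empty, and the reflection argument as written does not get off the ground. The fix is easy---attach an abstract collar to $M$ across $\partial M$, extend $g$ (hence $R$) smoothly, and extend $u$ by zero there; then your Aronszajn step goes through verbatim---but it needs to be said.

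By contrast, the paper avoids any extension across $\Sigma$ by using more of the equation than the trace. From $L^*u=0$ one has $\operatorname{Hess}_g u = u\bigl(\operatorname{Ric}_g - \tfrac{R}{n-1}g\bigr)$, so along each unit-speed geodesic $\gamma_x$ issuing normally from $x\in\Sigma$ the function $f(t)=u(\gamma_x(t))$ satisfies the linear ODE $f'' = \bigl[(\operatorname{Ric}_g - \tfrac{R}{n-1}g)(\gamma_x',\gamma_x')\bigr]f$ with $f(0)=f'(0)=0$, forcing $f\equiv 0$. This yields $u\equiv 0$ on a full one-sided neighborhood of $\Sigma$ in $\Omega$ directly, after which the trace equation plus interior unique continuation finishes. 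Your approach trades the Hessian ODE for a PDE unique-continuation theorem; that is a legitimate alternative, and arguably more robust since it uses only the scalar trace equation, but in this boundary configuration it costs you an auxiliary extension step that the ODE route sidesteps entirely.
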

\begin{proof}
Define the set
$$
Z = \{p\in\Omega\cup\Sigma: u(p)=du(p)=0\}.
$$
By the boundary conditions, $\Sigma\subset Z$, so $Z$ is non-empty. We will prove that $Z$ is both open and closed in the connected domain $\Omega\cup\Sigma$, forcing $Z$ to be the entire domain and thus $u\equiv0$.

It is easy to see that $Z$ is closed. Now let $p\in Z$. If $p\in\Sigma$, let $\gamma$ be the unit-speed geodesic starting at $p$ and normal to $\Sigma$. If $p\in\Omega$, let $\gamma$ be any unit-speed geodesic starting at $p$. Consider the function $f(t) = u(\gamma(t))$ along this geodesic. Then $f(t)$ satisfies the linear second order differential equation
$$
\begin{aligned}
f^{\prime \prime}(t) &=\left.(\operatorname{Hess}_g u)\right|_{\gamma(t)} \left(\gamma^{\prime}(t), \gamma^{\prime}(t)\right) \\
&=\left[\left(\operatorname{Ric}_g-\frac{R_g}{n-1} g\right)\left(\gamma^{\prime}(t), \gamma^{\prime}(t)\right)\right] f(t).
\end{aligned}
$$
Since $p\in Z$, the initial conditions are $f(0)=u(p)=0 \text { and } f^{\prime}(0)=du(p) \cdot \gamma^{\prime}(0)=0$. By the uniqueness theorem, the only solution is the trivial one: $f(t)\equiv0$. Therefore, $u$ vanishes identically along every geodesic starting at $p$. Since an open neighborhood of $p$ can be covered by such geodesics, it follows that $u\equiv0$ in that neighborhood. Hence, $Z$ is open.
\end{proof}

\begin{rmk}
For some results, we will work with metrics $g$ in a neighborhood of a metric $g_0$. For the H\"older spaces $C^{k,\alpha}(\Omega\cup\Sigma)$ that appear in these results, we have the flexibility to construct the norm using any chosen metric, such as $g_0$, as any pair of such norms will be equivalent. Regarding the weighted norms defining $\mathcal{B}_{l}(\Omega)$, we could work with norms based on the fixed metric $g_0$, or alternatively, when working at a metric $g$, we could employ weighted norms tailored to $g$. It's worth noting that while the weighted norms for two different metrics may not be equivalent, relevant estimates below are appropriately formulated so long as we consistently utilize the same metric in defining the weighted norms on both sides of the inequality. Therefore, we just need to use a metric consistently in this manner when interpreting the stated results.

We also observe from the analysis of the proofs presented in \cite{C-H} (with some modification regarding the $\Sigma$ part) that we could actually work with respect to a fixed weight function, such as $\rho_{g_0}$. Although it appears intuitive to utilize $\rho_{g}$ when working at a metric $g$, what's essential concerning the weight function is that it behaves as noted in \cite{C-H} in terms of a defining function for $\Sigma'$, such as $\theta_{g_0}$.
\end{rmk}

\section{Generic conditions}\label{sec3}
As previously mentioned, we must assume certain generic conditions to enable localized deformations. Now let us take a closer look at these conditions.

\subsection{The kernels of $\Psi^*$ and $\Phi^*$}\label{sec3.1}
Let $(M^n,g)$ be a Riemannian manifold ($n \geq 2$), and let $\Omega \subset M$ be a domain such that its closure $\overline{\Omega} = \Omega \cup \partial\Omega$ is a compact smooth $n$-manifold with smooth boundary $\Sigma = \partial\Omega$.

Define the operator $\Psi: \mathcal{C}^\infty(\Omega\cup\Sigma) \longrightarrow C^{\infty}(\Omega\cup\Sigma) \times C^{\infty}(\Sigma)$ by
$$
\Psi(a) \triangleq (L(a), B(a)),
$$
where $B(a) = 2\dot{H}(a)$, and let $\Psi^*$ denote its formal $L^2$-adjoint. We investigate the surjectivity of $\Psi$, which is equivalent to the solvability of the boundary value problem
\begin{align}\label{BVP}
\left\{\begin{aligned} 
L(a) & = f \quad & \text{in } &\Omega\\ 
B(a) & = \psi & \text{on } &\Sigma,
\end{aligned}\right.
\end{align}
for all smooth data $(f, \psi)$.

Let us first characterize the space $\operatorname{ker}\Psi^*$. Let $(u,v)\in C^{\infty}(\Omega\cup\Sigma) \times C^{\infty}(\Sigma)$ be in $\operatorname{ker}\Psi^*$. Then for any $a\in \mathcal{C}^\infty(\Omega\cup\Sigma)$,
\begin{align}\label{kernel phi}
0 & = \left<a, \Psi^*(u,v)\right> = \left<\Psi(a), (u,v)\right>\nonumber\\
& = \left<(L(a), B(a)), (u, v)\right>\nonumber\\
& = \int_\Omega L(a)u  \, d \mu_g+ \int_\Sigma B(a)v \, d\sigma_g\nonumber\\
& = \int_\Omega \left<a, L^*u\right> d \mu_g+  \int_\Sigma \left<a, u_\nu \hat{g}-u h\right>_{\hat{g}} d\sigma_g + \int_\Sigma B(a)(v-u) \, d\sigma_g,
\end{align}
where we used Proposition \ref{IBP} for the integration by parts.
\begin{itemize}[leftmargin=*]
\item[1.] If we take $a$ to be compactly supported in $\Omega$, then the boundary term vanishes, and we are left with
$$
0 = \int_\Omega \left<a, L^*u\right> d \mu_g.
$$
Since this holds for all such smooth, compactly supported tensors $a$, it follows that $L^*u = 0$ in $\Omega$. 
\item[2.] Now, suppose $a = 0$ on $\Sigma$. It follows that
$$
0=\int_\Sigma B(a)(v-u) \, d\sigma_g.
$$
From Proposition~\ref{linearized RH}, we recall that
$$
B(a) = 2\dot{H}(a) = \nu(\operatorname{tr}_\Sigma a).
$$
To proceed, we show that for any function $\varphi\in C^{\infty}(\Sigma)$, there is a tensor $a\in \mathcal{C}^\infty(\Omega\cup\Sigma)$ such that $a = 0$ and $B(a) = \varphi$ on $\Sigma$. 

\textbf{Construction:} Let $d\triangleq \operatorname{dist}(\cdot, \Sigma)$ denote the distance to $\Sigma$. Note that $d=0$ and $\nu(d)=1$ on $\Sigma$. Let $(x', x_n)$ be Fermi coordinates near a point $p \in \Sigma$, with $x_n = d$. With a minor abuse of notation, we may take 
$$a(x)=\frac1{n-1}\varphi(x') (d\cdot g)(x)$$ 
in a neighborhood of $\Sigma$, and extend it smoothly to the rest of $\Omega\cup\Sigma$. By construction, $a = 0$ on $\Sigma$, and
$$
B(a) = \nu(\operatorname{tr}_\Sigma a) = \nu(\varphi d) = \varphi\qquad\text{on }\Sigma. 
$$

Therefore, we conclude that $u = v$ on $\Sigma$.
\item[3.] Finally, from (\ref{kernel phi}) we have
$$
0 = \int_\Sigma \left<a, u_\nu \hat{g}-u h\right>_{\hat{g}} d\sigma_g,
$$
which means $u_\nu \hat{g}-u h=0$ on $\Sigma$.
\end{itemize}
In conclusion, $\operatorname{ker}\Psi^*$ is the space of solutions to the following equations:
\begin{align*}
\left\{\begin{aligned}
L^*u & = 0 \quad &\text{in } &\Omega\\ 
u_\nu \hat{g}-u h & = 0 &\text{on } &\Sigma\\
u & = v &\text{on } &\Sigma.
\end{aligned}\right.
\end{align*}

On the other hand, from Proposition \ref{IBP} the formal adjoint problem of (\ref{BVP}) with respect to Green's formula is
\begin{align*}
\left\{\begin{aligned}
L^*u & = a \quad &\text{in } &\Omega\\ 
u_\nu \hat{g}-u h & = b &\text{on } &\Sigma.
\end{aligned}\right.
\end{align*}
This leads us to consider the operator $\Phi^*: C^{\infty}(\Omega\cup\Sigma) \longrightarrow \mathcal{C}^\infty(\Omega\cup\Sigma) \times \mathcal{C}^\infty(\Sigma)$ defined by
$$
\Phi^*(u) \triangleq (L^*u, u_\nu \hat{g}-u h).
$$ 
Although $\Phi^*$ differs from $\Psi^*$, their kernels are closely related (see \cite{L-M}*{p. 162, Proposition 5.4}). In fact, their kernels are almost identical and can be characterized as the space of solutions to the following equations:
\begin{align}\label{generic}
\left\{\begin{aligned} 
L^*u & = 0 \quad & \text{in } &\Omega\\ 
u_\nu \hat{g} & = u h & \text{on } &\Sigma.
\end{aligned}\right.
\end{align}
Since our primary interest lies in $\ker \Psi^*$, we will focus on the operator $\Phi^*$ instead, whose simpler structure (especially in weighted spaces) makes it easier to analyze.

\subsection{Generic conditions}
We say that generic conditions are satisfied on $(\Omega, \Sigma)$ when $\operatorname{ker}\Phi^*$ is trivial. The definition of generic conditions also extends to the non-compact setting, and as the results of the rest of this section are local, they extend as well.

We begin by analyzing the boundary condition. At an umbilical point $x_0 \in \Sigma$, the second fundamental form satisfies $h_{ij}(x_0) = \frac{H(x_0)}{n-1}\hat{g}_{ij}(x_0)$, which implies the boundary condition 
$$
u_\nu = \frac{H}{n-1}u \qquad \text{at } x_0.
$$
Conversely, if $x_0 \in \Sigma$ is a non-umbilic point, then we must have 
$$u_\nu = u = 0 \qquad \text{at } x_0.$$
Furthermore, since the non-umbilic condition is open, there exists a neighborhood $O \subset \Sigma$ of $x_0$ consisting entirely of non-umbilic points. Consequently, $u$ and $u_\nu$ vanish on $O$. By a unique continuation argument (as in the proof of Lemma~\ref{no kernel}), it follows that $u\equiv0$ on all of $\Omega$.

On the other hand, it is well known \cite{F-M} that the scalar curvature is constant in case $\operatorname{ker}L^*_g$ is nontrivial on a connected manifold. As noted in \cite{C}, this condition is related to static spacetimes in general relativity. Recall that a static spacetime is a four-dimensional Lorentzian manifold which possesses a timelike Killing field and a spacelike hypersurface which is orthogonal to the integral curves of this Killing field. We recall the following proposition in \cite{C}*{Proposition 2.7}:
\begin{prop}
Let $(M^n,g)$ be a Riemannian manifold. Then $0\ne f\in\operatorname{ker} L^*_g$ if and only if the warped product metric $\bar{g} \equiv -f^2 dt^2 + g$ is Einstein (away from the zeros of $f$).
\end{prop}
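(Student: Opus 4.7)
The plan is to compute the Ricci tensor of the static warped product $\bar g = -f^2\, dt^2 + g$ in block form with respect to the splitting $T(\mathbb{R} \times M) = \mathbb{R}\,\partial_t \oplus TM$, and then match the Einstein condition term by term against the equation $L^*_g f = 0$.

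First I would record the warped-product Ricci formulas (derivable either directly from the only non-zero Christoffel symbols $\bar\Gamma^0_{0i} = \partial_i f / f$, $\bar\Gamma^i_{00} = f\nabla^i f$, $\bar\Gamma^i_{jk} = \Gamma^i_{jk}$, or by specializing O'Neill's warped-product curvature identities with base $(M,g)$, one-dimensional fiber $(\mathbb{R},-dt^2)$, and warping function $f$): for $X,Y$ tangent to $M$,
\begin{align*}
\overline{\operatorname{Ric}}(\partial_t,\partial_t) &= f\,\Delta_g f,\\
\overline{\operatorname{Ric}}(\partial_t,X) &= 0,\\
\overline{\operatorname{Ric}}(X,Y) &= \operatorname{Ric}_g(X,Y) - \tfrac{1}{f}\,\operatorname{Hess}_g f(X,Y).
\end{align*}
The division by $f$ in the last line is exactly why the statement is phrased on $\{f \ne 0\}$.

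Next I would unpack the equation $L^*_g f = -(\Delta_g f)g + \operatorname{Hess}_g f - f\operatorname{Ric}_g = 0$ algebraically. Tracing with $g$ yields $(1-n)\Delta_g f = f R_g$, so
\[
\Delta_g f = -\tfrac{R_g}{n-1}\,f \qquad\text{and}\qquad \operatorname{Hess}_g f = f\operatorname{Ric}_g - \tfrac{R_g}{n-1}\,f\,g.
\]
For the forward direction, set $\lambda := R_g/(n-1)$ and substitute these two identities into the block formulas: the $tt$-entry becomes $\overline{\operatorname{Ric}}(\partial_t,\partial_t) = f\Delta_g f = -\lambda f^2 = \lambda\bar g(\partial_t,\partial_t)$, and the $M$-block becomes $\overline{\operatorname{Ric}}(X,Y) = \operatorname{Ric}_g - (\operatorname{Ric}_g - \lambda g) = \lambda g = \lambda\bar g(X,Y)$. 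Because $\dim\bar g = n+1 \ge 3$, the contracted second Bianchi identity forces the pointwise factor $\lambda$ (equivalently $R_g/(n-1)$) to be locally constant on $\{f\ne 0\}$, so $\bar g$ is Einstein in the usual sense. For the converse, if $\overline{\operatorname{Ric}} = \lambda\bar g$ on $\{f \ne 0\}$ with $\lambda$ a constant, the $tt$-block gives $\Delta_g f = -\lambda f$, and the $M$-block gives $\operatorname{Hess}_g f = f\operatorname{Ric}_g - \lambda f g = f\operatorname{Ric}_g + (\Delta_g f)g$, which rearranges to exactly $L^*_g f = 0$ on $\{f \ne 0\}$; continuity of the tensor $L^*_g f$ then extends the identity across the closed zero set of $f$.

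The only real obstacle is producing the three block formulas for $\overline{\operatorname{Ric}}$ correctly; after that, the rest of the argument is linear bookkeeping. A minor conceptual point worth flagging is the compatibility between the Einstein constant $\lambda$ coming from $\bar g$ and the quantity $R_g/(n-1)$ coming from the trace of $L^*_g f = 0$: their agreement is automatic and is precisely the Fischer–Marsden phenomenon that $R_g$ must be constant whenever $\ker L^*_g$ is nontrivial on a connected set.
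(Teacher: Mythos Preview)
Your proof is correct. Note, however, that the paper does not give its own proof of this proposition: it is simply quoted from Corvino \cite{C}*{Proposition 2.7}, so there is nothing in this paper to compare against. Your argument---compute $\overline{\operatorname{Ric}}$ in block form via the O'Neill warped-product formulas, then match the Einstein condition against the trace and traceless parts of $L^*_g f = 0$---is the standard route and is essentially what Corvino does in the original reference.

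One small wording comment: in the forward direction you invoke the contracted second Bianchi identity on $\bar g$ to conclude that $\lambda = R_g/(n-1)$ is locally constant. That is fine, but it is perhaps cleaner (and closer in spirit to how the paper frames things just above the proposition) to note directly that nontriviality of $\ker L^*_g$ already forces $R_g$ to be locally constant on $M$ by the Fischer--Marsden argument, so $\lambda$ is constant before you ever pass to $\bar g$. Either way the logic closes.
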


In view of the preceding discussion of the boundary condition, we have
\begin{thm}\label{generic thm}
Let $(\Omega, \Sigma)$ be a domain in $(M^n,g)$. If $\operatorname{ker}L^*$ is trivial, or there is any non-umbilical point on $\Sigma$, then $\operatorname{ker}\Phi^*$ is trivial.
\end{thm}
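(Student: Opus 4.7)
The plan is essentially bookkeeping: both hypotheses are designed so that a $u\in\ker\Phi^\ast$ is forced to satisfy the Cauchy data $u = u_\nu = 0$ somewhere on $\Sigma$, at which point Lemma \ref{no kernel} (or its local version) finishes the job. Let $u\in\ker\Phi^\ast$, so
\begin{align*}
L^\ast u &= 0 \quad \text{in } \Omega,\\
u_\nu\,\hat g - u\,h &= 0 \quad \text{on } \Sigma.
\end{align*}

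\textbf{Case 1: $\ker L^\ast$ is trivial.} Then $L^\ast u=0$ in $\Omega$ gives $u\equiv 0$ directly, so $\ker\Phi^\ast$ is trivial.

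\textbf{Case 2: there is a non-umbilical point $x_0\in\Sigma$.} This case is already essentially carried out in the paragraph following (\ref{generic}); I would simply organize it as a formal proof. The boundary tensor equation $u_\nu\hat g = uh$, viewed pointwise and using nondegeneracy of $\hat g$, forces the dichotomy that either $h(x)=\tfrac{u_\nu(x)}{u(x)}\hat g(x)$ (so $x$ is umbilical) or $u(x)=u_\nu(x)=0$. Since the non-umbilical locus $\{x\in\Sigma : h(x)-\tfrac{H(x)}{n-1}\hat g(x)\ne 0\}$ is open in $\Sigma$ by continuity, $x_0$ lies in an open neighborhood $O\subset\Sigma$ on every point of which $u=u_\nu=0$.

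At this stage I would rerun the argument of Lemma \ref{no kernel} verbatim, with $\Sigma$ replaced by $O$: for each $x\in O$ the function $f(t)=u(\gamma_x(t))$ along the normal geodesic $\gamma_x$ satisfies the linear second order ODE
$$f''(t)=\Bigl[\bigl(\operatorname{Ric}_g-\tfrac{R_g}{n-1}g\bigr)(\gamma_x'(t),\gamma_x'(t))\Bigr]f(t),$$
with $f(0)=f'(0)=0$, so $f\equiv 0$. Because the normal exponential map is a local diffeomorphism near the smooth hypersurface $\Sigma$, the union of these geodesic segments covers an open subset of $\Omega$ on which $u$ vanishes identically. Taking the trace of $L^\ast u=0$ yields the elliptic equation $(n-1)\Delta_g u=-R_g u$, and standard unique continuation (Aronszajn) propagates the vanishing from the open set to the connected manifold $\Omega$.

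The only non-routine point worth stating carefully in the writeup is the openness of the non-umbilical locus and the observation that Lemma \ref{no kernel} only ever used the Cauchy data $u=u_\nu=0$ on an open subset of $\Sigma$ (not on all of $\Sigma$), since the geodesic sweep plus unique continuation is genuinely local at the boundary. No additional analytic input beyond what is already in Section \ref{sec2} is needed.
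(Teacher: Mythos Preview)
Your proposal is correct and essentially identical to the paper's argument: the paper proves this theorem by the discussion immediately preceding its statement, which is exactly your Case 2 (the non-umbilical open set forces $u=u_\nu=0$ there, then invoke the geodesic/unique-continuation argument of Lemma \ref{no kernel}), together with the trivial Case 1. Your writeup simply makes explicit what the paper summarizes as ``Combined with the above discussion.''
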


Many authors call a non-trivial element $u\in\operatorname{ker}L^*$ a static potential. With the boundary term, however, we will call a non-trivial element $u\in\operatorname{ker}L^*$ a \textbf{possible} static potential on $(\Omega, \Sigma)$, and if it satisfies the boundary condition on $\Sigma$ as well, we then call it a static potential on $(\Omega, \Sigma)$ (see \cites{A-dL, A-dL2} for similar definitions of static potentials and static manifolds with boundary). That is, a non-trivial element $u\in\operatorname{ker}\Phi^*$ is called a static potential in our setting, in which case generic conditions are not satisfied. Let us study the static potentials more carefully and derive some geometric properties of $\Omega$ and $\Sigma$. In fact, we have the following result:

\begin{thm}\label{const mean}
If $\operatorname{ker}\Phi^*$ is non-trivial, then the scalar curvature of $\Omega$ is constant, the boundary $\Sigma$ is umbilic, and the mean curvature is locally constant on $\Sigma$.
\end{thm}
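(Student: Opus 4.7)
The plan is to combine the classical Fischer--Marsden divergence argument for the interior with a boundary analysis that exploits umbilicity and the Codazzi--Mainardi equation. Let $u \in \ker\Phi^*$ be a nontrivial static potential; the three facts I will use repeatedly are the rewritten interior PDE $\operatorname{Hess}(u) = (\Delta u)g + u\operatorname{Ric}$, the boundary identity $u_\nu\hat g = u h$, and the elliptic unique continuation for $(n-1)\Delta u = -Ru$ that already powered Lemma \ref{no kernel}.

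For the scalar curvature statement, I would take the divergence of the interior PDE. Using the commutator formula $\nabla^i\nabla_i\nabla_j u = \nabla_j\Delta u + R_{ji}\nabla^i u$ together with the contracted second Bianchi identity $\nabla^i R_{ij} = \tfrac{1}{2}\nabla_j R$, the gradient-of-$u$ terms cancel and one obtains $u\,\nabla_j R = 0$. Since any open subset of $\Omega$ on which $u$ vanishes forces $u\equiv 0$ by unique continuation, the set $\{u \neq 0\}$ is open and dense in $\Omega$, and so $\nabla R \equiv 0$, i.e.\ $R$ is locally constant.

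Next, I would argue by contradiction that $\Sigma$ is umbilic. If some $x_0 \in \Sigma$ were non-umbilical, then $h$ would fail to be a scalar multiple of $\hat g$ on a whole open neighborhood $O \subset \Sigma$, and at each point of $O$ the equation $u_\nu\hat g = u h$ would force $u = u_\nu = 0$. The normal-geodesic argument in the proof of Lemma \ref{no kernel} applies \emph{mutatis mutandis} to $O$ (it only needs a one-sided tubular neighborhood of an open piece of $\Sigma$), producing an open subset of $\Omega$ on which $u$ vanishes, and unique continuation then yields $u \equiv 0$, a contradiction. Hence $h = \lambda\hat g$ with $\lambda = H/(n-1)$, and the boundary condition simplifies to $u_\nu = \lambda u$.

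Finally, I would show that $\lambda$ is locally constant via the mixed Hessian on $\Sigma$. For $X \in T\Sigma$, since $g(X,\nu) = 0$ the interior PDE gives $\operatorname{Hess}(u)(X,\nu) = u\operatorname{Ric}(X,\nu)$. Computing the same quantity directly as $X(u_\nu) - du(D_X\nu)$ and using $D_X\nu = \lambda X$ (the shape operator of an umbilic hypersurface), the substitution $u_\nu = \lambda u$ yields $u\,X(\lambda) = u\operatorname{Ric}(X,\nu)$. Independently, tracing the Codazzi--Mainardi equation for $h = \lambda\hat g$ supplies $\operatorname{Ric}(X,\nu) = -(n-2)X(\lambda)$, and combining the two produces $(n-1)X(\lambda) = 0$ on $\{u \neq 0\}\cap\Sigma$; hence $\lambda$, and therefore $H$, is locally constant on this open dense subset, and on all of $\Sigma$ by continuity. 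The one step requiring genuine attention is the adaptation of Lemma \ref{no kernel} from all of $\Sigma$ to an open subset, but an inspection of the proof confirms it goes through unchanged; otherwise the argument is bookkeeping, with some care needed for the paper's sign convention $h_{ij} = -\langle \nu, D_{e_i}e_j\rangle$ when tracing the Codazzi identity.
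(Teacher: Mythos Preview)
Your proposal is correct and follows essentially the same route as the paper: the Fischer--Marsden divergence identity for constant $R$ (which the paper simply cites from \cite{F-M}), the non-umbilic point contradiction via Lemma \ref{no kernel}, and then for $H$ constant the comparison of the mixed Hessian $u_{i\nu}$ with $u R_{i\nu}$ together with the traced Codazzi identity for an umbilic hypersurface. Your direct computation of $\operatorname{Hess}(u)(X,\nu)$ is exactly the paper's differentiation of the boundary equation $u_\nu\hat g = uh$ in different packaging; the only point to make explicit is that density of $\{u\neq 0\}$ on $\Sigma$ (not just in $\Omega$) follows because umbilicity gives $u_\nu=\lambda u=0$ wherever $u=0$, feeding back into the adapted Lemma \ref{no kernel}.
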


\begin{proof}
From the preceding discussion, we know that $\Sigma$ is umbilic and its second fundamental form satisfies $h = \frac{H}{n-1}\hat{g}$. Let $\{e_1, \dots, e_{n-1}, \nu\}$ be a local orthonormal frame adapted to $\Sigma$, where $e_i$'s are tangent to $\Sigma$ and $\nu$ is the outward unit normal.

We begin by differentiating the second equation from (\ref{generic}) in the direction of $e_i$.
\begin{align*}
0 & = \nabla_{e_i} \left( u_\nu \hat{g}_j^i - u h_j^i \right)\\
		& = \nabla_{e_i} \left( u_\nu \hat{g}_j^i - u \frac{H}{n-1}\hat{g}_j^i \right)\\
		& = e_i \left( u_\nu - u \frac{H}{n-1} \right)\hat{g}_j^i\\
		& = \left(u_{i\nu} + (D_i \nu)u - u_i \frac{H}{n-1} - \frac{u}{n-1}\nabla_i H\right)\hat{g}_j^i\\
		& = \left(u_{i\nu} + h_i^k u_k - u_i \frac{H}{n-1} - \frac{u}{n-1}\nabla_i H\right)\hat{g}_j^i\\
		& = u_{j\nu} + h_j^k u_k - u_i h_j^i - \frac{u}{n-1}\nabla_j H\\
		& = u_{j\nu} - \frac{u}{n-1}\nabla_j H.
\end{align*}

Next, we examine the $i\nu$-component of the equation $L^*u = 0$. Since $L^*u = -\left(\Delta_{g} u\right) g+\operatorname{Hess}(u) - u \operatorname{Ric}_g$, we have
$$
0 = u_{i\nu} - u R_{i\nu}.
$$

Finally, by the Codazzi equation,
\begin{align*}
\nabla_i H & = \nabla_i (h_{jk}\hat{g}^{jk}) = \nabla_i h_{jk}\hat{g}^{jk} = \left(\nabla_j h_{ik} - R_{\nu kij}\right)\hat{g}^{jk}\\
	& = \nabla_j \left(\frac{H}{n-1}\delta_{i}^j\right) - R_{i\nu} = \frac{1}{n-1}\nabla_i H - R_{i\nu}.
\end{align*}

Combining the above three equations, we obtain
$$
\frac{1}{n-1}u\nabla_i H = u_{i\nu} = u R_{i\nu} = -\frac{n-2}{n-1}u\nabla_i H,
$$
and this means $u\nabla_i H = 0$.

We now conclude that $\nabla H = 0$ on $\Sigma$, meaning the mean curvature is locally constant. Suppose, for contradiction, that $\nabla H(x_0) \neq 0$ at some $x_0 \in \Sigma$. Then $\nabla H$ is non-zero in an open neighborhood $O \subset \Sigma$ of $x_0$. The above equation then forces $u \equiv 0$ on $O$. Furthermore, since $\Sigma$ is umbilic, the boundary condition $u_\nu = \frac{H}{n-1}u$ implies $u_\nu$ also vanishes on $O$. A unique continuation argument, identical to the proof of Lemma~\ref{no kernel}, now implies that $u$ vanishes identically on $\Omega$, which contradicts our assumptions.
\end{proof}

If generic conditions are not satisfied, then $(\Omega, \Sigma)$ is called a non-generic domain in $(M^n,g)$. We have particular interest in non-generic domains and the space of static potentials on them. One of the reasons is that, non-generic domains are often related to certain rigidity results for the positive mass theorem (e.g. \cites{A-B-dL, A-dL, A-dL2, H-M-R, M, S-T}). 

In general, for non-generic domains $(\Omega, \Sigma)$, the following two results hold as a direct consequence of \cite{C}*{Corollary 2.4, Proposition 2.5}. The key idea is that the equation $L^*u = 0$ reduces to a second-order linear ODE along geodesics, following the proof of \cite[Corollary 2.4]{C}, cf. \cite{F-M}. The map which takes a possible static potential $u$ to its one-jet $(u(p), du(p))$ at a point is injective, so that the dimension of the space of possible static potentials is at most $n+1$. The boundary condition here shows the map $u \mapsto \left(u(p), \left.du\right|_\Sigma(p)\right)$ is injective on static potentials.

\begin{prop}
For non-generic domains $(\Omega, \Sigma)$ in $(M^n,g)$, we have $\operatorname{dim} \operatorname{ker} \Phi^{*} \le n$ in $H_{\operatorname{loc}}^{2}(\Omega\cup\Sigma)$.
\end{prop}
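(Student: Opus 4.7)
The plan is to combine Corvino's existing analysis of $\ker L^{*}$ on $\Omega$ with the extra boundary constraint encoded in $\Phi^{*}$, which cuts the dimension by one. From \cite{C}*{Proposition 2.5}, any $H^2_{\mathrm{loc}}$ solution of $L^{*}u=0$ is automatically $C^{2}(\overline{\Omega})$. From \cite{C}*{Corollary 2.4}, such a solution is uniquely determined by its $1$-jet at any single point: taking the trace of $L^{*}u=0$ yields $(n-1)\Delta_g u = -R_g u$, and then the equation $\operatorname{Hess}_g u = (\Delta_g u)\, g + u\,\operatorname{Ric}_g$ can be read as a linear system of ODEs along any geodesic for the pair $(u,Du)$, so unique continuation from a prescribed $1$-jet holds. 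Consequently the restriction map
\begin{equation*}
J_p:\ker L^{*}\longrightarrow \mathbb{R}\oplus T_p^{*}M,\qquad u\mapsto (u(p),\,Du(p))
\end{equation*}
is injective, which already gives $\dim\ker L^{*}\le n+1$.

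Next I would use the non-generic hypothesis to bring in the boundary. Since $\ker\Phi^{*}$ is assumed non-trivial, Theorem~\ref{generic thm} (together with the analysis preceding it showing that a single non-umbilical point forces $u\equiv0$) implies that $\Sigma$ is totally umbilic, so $h=\tfrac{H}{n-1}\hat g$ and the boundary condition $u_\nu \hat g = u\, h$ is equivalent to the scalar identity
\begin{equation*}
u_\nu \;=\; \tfrac{H}{n-1}\,u \qquad \text{on } \Sigma.
\end{equation*}
Pick any point $p\in\Sigma$ (possible since $\Sigma\neq\varnothing$) and let $\nu$ be the outward unit normal there. The relation above says that the value of the normal component of $Du(p)$ is a prescribed multiple of $u(p)$, i.e.\ $J_p(u)$ lies in the codimension-one subspace
\begin{equation*}
W_p \;=\; \bigl\{(c,\xi)\in \mathbb{R}\oplus T_p^{*}M \,:\, \xi(\nu) = \tfrac{H(p)}{n-1}\,c\bigr\}\subset \mathbb{R}\oplus T_p^{*}M,
\end{equation*}
which has dimension $n$.

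Combining the two steps, $J_p$ restricts to an injection $\ker\Phi^{*}\hookrightarrow W_p$, hence $\dim\ker\Phi^{*}\le \dim W_p = n$, as claimed. The only potentially delicate point is justifying the unique continuation from a $1$-jet on $\overline{\Omega}$ up to and including boundary points of $\Sigma$; this is handled because Proposition~\ref{no kernel} already propagates vanishing of $(u,u_\nu)$ on $\Sigma$ into the interior via the geodesic ODE argument, so the same device (applied at a single point $p\in\Sigma$ with arbitrary $1$-jet rather than the zero jet) gives the injectivity of $J_p$ in this boundary setting. No further regularity work is needed beyond what \cite{C}*{Prop.~2.5} already supplies.
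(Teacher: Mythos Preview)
Your argument is correct and is essentially the same approach as the paper's: the paper simply notes that Corvino's ODE argument gives $\dim\ker L^{*}\le n+1$ via the $1$-jet map, and that the boundary relation $u_\nu=\tfrac{H}{n-1}u$ at a point $x_0\in\Sigma$ cuts the space of admissible initial data $(u(x_0),\nabla u(x_0))$ down to dimension $n$. You have written out the same idea in more detail, including the explicit codimension-one subspace $W_p$ and the observation (via Theorem~\ref{generic thm}) that non-genericity forces $\Sigma$ to be umbilic so the boundary condition reduces to the scalar one.
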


\begin{prop}\label{kernel C2}
$H_{\operatorname{loc}}^{2}(\Omega\cup\Sigma)$ elements in $\operatorname{ker} \Phi^*$ are actually in $C^2(\overline{\Omega})$.
\end{prop}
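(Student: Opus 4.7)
The strategy closely mirrors Corvino's proof of \cite{C}*{Proposition 2.5}: the new boundary condition $u_\nu\hat g = u h$ contained in $\Phi^*u=0$ plays no role for regularity, so the entire argument only exploits $L^*u=0$ in $\Omega$. The plan breaks naturally into interior regularity, followed by a boundary extension via an ODE along inward normal geodesics.

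First, I would take the trace of $L^*u = -(\Delta u)g + \operatorname{Hess}(u) - u\,\operatorname{Ric}_g = 0$ to produce the scalar linear elliptic equation
\[
(n-1)\Delta_g u + Ru = 0 \quad \text{in } \Omega.
\]
Since $g_0$ is $C^{4,\alpha}$, the coefficient $R$ lies in $C^{2,\alpha}$, and a standard interior elliptic bootstrap applied to the $H^2_{\mathrm{loc}}$ assumption upgrades $u$ to $C^{4,\alpha}_{\mathrm{loc}}(\Omega)$. Next, rewriting $L^*u=0$ tensorially yields the Hessian formula
\[
\operatorname{Hess}(u) \;=\; u\left(\operatorname{Ric}_g - \tfrac{R}{n-1}g\right) \;=:\; u\cdot T,
\]
where $T$ is a $C^{2,\alpha}$ tensor defined on a neighborhood of $\overline{\Omega}$.

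Second, to extend $u$ up to an arbitrary boundary point $x_0 \in \p\Omega$, I would introduce Fermi coordinates $(x',s)$ on a small collar $U\times[0,s_*]$ about $x_0$, using the smoothness of $\p\Omega$. In these coordinates $\Gamma^{a}_{ss}\equiv 0$ for all $a$, so the $(s,s)$-component of the Hessian formula reduces to the linear ODE
\[
\p_s^2 u(x',s) \;=\; T_{ss}(x',s)\,u(x',s),
\]
with coefficient $T_{ss}$ that extends $C^{2,\alpha}$ up to $s=0$. Fix a small $s_0>0$; by Step~1 the Cauchy data $\bigl(u(x',s_0),\,\p_s u(x',s_0)\bigr)$ are $C^{4,\alpha}$ functions of $x'\in U$. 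Standard linear ODE theory with smooth parameter dependence then produces a unique $C^{4,\alpha}$ solution $\tilde u(x',s)$ on the full slab $U\times[0,s_*]$, and backward uniqueness forces $\tilde u\equiv u$ on $U\times(0,s_*)$. Hence $u$ extends as a $C^{4,\alpha}$ function up to $x_0$; covering the compact $\p\Omega$ by finitely many such collars upgrades $u$ to $C^{4,\alpha}(\overline{\Omega})$, which in particular yields $u\in C^2(\overline{\Omega})$.

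The one mild subtlety is the corner set $\p\Sigma\subset\p\Omega$ where $\Sigma$ meets $\p\Omega\setminus\Sigma$; however, the ODE extension above is insensitive to the provenance of the boundary point as long as an inward-pointing Fermi collar exists along the normal direction, so the same argument applies at corners (possibly employing the Fermi collar associated with either side). The main conceptual point is really the rigidity built into the overdetermined system $L^*u=0$: the $(s,s)$-component alone is already a non-degenerate second-order ODE along normal geodesics, which is exactly what converts interior smoothness into boundary smoothness without invoking any boundary data.
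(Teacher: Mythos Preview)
Your proposal is correct and follows exactly the approach the paper intends: the paper simply cites \cite{C}*{Proposition 2.5}, and you have faithfully reproduced Corvino's argument (trace to get $(n-1)\Delta_g u + R u = 0$ for interior regularity, then the normal-geodesic ODE $f''(t) = T(\gamma',\gamma')f(t)$ to carry smoothness up to $\p\Omega$). One small contextual remark: in Section~\ref{sec3} where this proposition is stated, the convention is $\Sigma = \p\Omega$, so there is no corner set $\p\Sigma$ to worry about; your extra discussion there is unnecessary but harmless.
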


We will take a closer look at non-generic domains in another paper \cite{Sheng}.

\section{Localized Deformation}\label{sec4}
This section is devoted to the proof of our main theorem. The proof proceeds in several stages, beginning with a detailed analysis of the central linearized problem.

The linear analysis focuses on the following \emph{non-homogeneous boundary value problem (BVP)}:
\begin{align}\label{prob: 0}
\left\{\begin{aligned}
L(a) & = f \quad &\text{in } &\Omega\\ 
B(a) & = \psi &\text{on } &\Sigma.
\end{aligned}\right.
\end{align}
We decompose this into two sub-problems: the interior problem
\begin{equation}\label{prob: 1}
    L(a) = f \qquad \mbox{in $\Omega$},
\end{equation}
and the \emph{semi-homogeneous BVP}
\begin{align}\label{prob: 2}
\left\{\begin{aligned}
L(a) & = 0 \quad &\text{in } &\Omega\\ 
B(a) & = \psi &\text{on } &\Sigma.
\end{aligned}\right.
\end{align}
Because the BVP \eqref{prob: 0} lacks ellipticity for tensor solutions, we follow the approach in \cite{C} to seek a special solution of the form $a = \rho L^*u$, which also provides desirable decay rates near the boundary. This substitution transforms the interior problem \eqref{prob: 1} into a fourth-order homogeneous Dirichlet problem, which is solved in Theorem~\ref{Dirichlet1}:
\begin{align*}
\left\{\begin{aligned}
L(\rho L^*u) & = f \quad &\text{in } &\Omega\\ 
u & = 0 &\text{on } &\Sigma\\
u_\nu & = 0 &\text{on } &\Sigma.
\end{aligned}\right.
\end{align*}

The main difficulty is thus transferred to the semi-homogeneous problem \eqref{prob: 2}. Its solvability is equivalent to the surjectivity of the boundary operator $B$ restricted to the subspace of tensors $S = \ker L$. We decompose this subspace $S$ into
\begin{equation*}
    S = \overline{S'} \oplus (S')^\perp,
\end{equation*}
where $S' = \{a = \rho L^* u: L(a) = 0,\, u_\nu = 0\}$ is called the \emph{principal component} of tensors in $S$. By perturbing a self-adjoint operator $P$ (defined in Section~\ref{sec4.3}) by a compact operator, Fredholm theory (Theorem~\ref{Fredholm}) implies that the image $B(S')$ has finite codimension. This means the corresponding fourth-order BVP
\begin{align}\label{prob: 2u}
\left\{\begin{aligned}
L(\rho L^*u) & = 0 \quad &\text{in } &\Omega\\ 
B(\rho L^*u) & = \psi &\text{on } &\Sigma\\
u_\nu & = 0 &\text{on } &\Sigma
\end{aligned}\right.
\end{align}
has a weak solution $u$ for all $\psi$ in a finite-codimension subspace.
We then show that the full problem \eqref{prob: 2} is solvable by weak tensor solutions for all $\psi$ (under generic conditions) by complementing this image. Specifically, in Theorem~\ref{basis}, we find a set of tensors $a_1, \dots, a_p \in S\setminus S'$ such that their images form a finite-dimensional space $W \triangleq \operatorname{span}\{B(a_1), \dots, B(a_p) \}$ that algebraically complements $B(S')$.

A crucial step is establishing the regularity of these solutions. The principal component $\rho L^* u$ in \eqref{prob: 2u} satisfies an elliptic problem, so we can establish its regularity using Schauder estimates. However, this standard strategy fails for tensors $a_1, \dots, a_p$ in the complementary space, as \eqref{prob: 2} lacks this elliptic structure. Therefore, these tensors must be carefully selected from the outset (using density theorems) to ensure they possess the required regularity.

Finally, we employ a Picard iteration scheme to extend this comprehensive linear analysis to the full nonlinear problem, thereby proving the main theorem.

\subsection{Poincar\'e-type inequalities}\label{sec4.1}
We will first focus on the following two operators on weighted Sobolev spaces:
$$
L^*: H_{\rho}^2(\Omega) \rightarrow \mathcal{L}_{\rho}^2(\Omega),
$$
and
\begin{align*}
\Phi^*: H_{\rho}^2(\Omega) & \rightarrow \mathcal{L}_{\rho}^2(\Omega)\times \mathcal{L}_{\rho}^2(\Sigma);\\
u & \mapsto (L^*u, u_\nu \hat{g}-u h)
\end{align*}
with norm 
$$\left\|\Phi^{*} u\right\|^2_{\mathcal{L}^{2}_\rho} =  \left\|L^{*} u\right\|^2_{\mathcal{L}^{2}_\rho(\Omega)}+\|u_\nu \hat{g}-u h\|^2_{\mathcal{L}^2_\rho(\Sigma)}.$$ 
The two Poincar\'e-type inequalities for $L^*$ and $\Phi^*$ in this subsection will be useful for later proof.

The following basic estimate, which relies solely on the overdeterminedness of $L^*$ and is proved in \cite{C}, will be important. Moreover, the constant $C$ can be chosen uniformly for all nearby metrics.
\begin{lem}\label{basic est}
There is a constant $C = C(n, g, \Omega)$, uniform for metrics $\mathcal{C}^{k+4}$-near $g$, so that for $u \in H_{\operatorname{loc}}^{2}(\Omega)$,
$$
\|u\|_{H^2(\Omega)} \leq C\left(\left\|L^{*} u\right\|_{\mathcal{L}^{2}(\Omega)}+\|u\|_{L^{2}(\Omega)}\right).
$$
\end{lem}

In fact, for any $\epsilon\ge0$ small we have the following estimate, where the constant $C$ is independent of $\epsilon$:
$$
\|u\|_{H^2(\Omega_\epsilon)} \leq C(n, g, \Omega)\left(\left\|L^{*} u\right\|_{\mathcal{L}^{2}(\Omega_\epsilon)}+\|u\|_{L^{2}(\Omega_\epsilon)}\right).
$$
Recall $\Omega_\epsilon$ is defined in Section \ref{Weight function}. Then by a standard integration technique discussed in \cite{C}*{Theorem 3}, we have the following corollary. 
\begin{cor}\label{basic est2}
There is a constant $C = C(n, g, \Omega, \rho)$, uniform for metrics $\mathcal{C}^{k+4}$-near $g$, so that for $u \in H^2_{\rho} (\Omega)$,
$$
\|u\|_{H_\rho^2(\Omega)} \leq C\left(\left\|L^{*} u\right\|_{\mathcal{L}_\rho^{2}(\Omega)}+\|u\|_{L_\rho^{2}(\Omega)}\right).
$$
\end{cor}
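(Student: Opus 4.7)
The plan is to integrate the unweighted basic estimate from Lemma \ref{basic est} (in the $\epsilon$-uniform form noted immediately after its statement) against a measure manufactured from the weight function $\rho$, following Corvino's layer-cake argument in \cite{C}*{Theorem 3}. The only two ingredients I need are the uniform basic estimate on each $\Omega_\epsilon$ and a Fubini identity expressing the weighted norms as integrals of unweighted norms on the exhausting family $\{\Omega_s\}_{s \in (0,r_0)}$.

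The first step is to write $\rho(x) = \psi(d_g(x))$ with $\psi = \tilde{\rho}^N$ smooth, monotone nondecreasing, vanishing at $0$, and identically equal to $1$ for $t \geq r_0$; in particular $\psi' \geq 0$ is supported in $[0, r_0]$. Since $\psi(0) = 0$, the fundamental theorem of calculus yields
$$
\rho(x) = \int_0^{r_0} \psi'(s) \, \chi_{\{d_g(x) > s\}} \, ds,
$$
and hence, for any nonnegative measurable $f$, Fubini gives
$$
\int_\Omega f \, \rho \, d\mu_g = \int_0^{r_0} \psi'(s) \int_{\Omega_s} f \, d\mu_g \, ds.
$$
Applying this identity in turn with $f = |D^j u|^2$ for $j = 0, 1, 2$, with $f = |L^* u|^2$, and with $f = u^2$ rewrites $\|u\|_{H^2_\rho(\Omega)}^2$, $\|L^* u\|_{\mathcal{L}^2_\rho(\Omega)}^2$, and $\|u\|_{L^2_\rho(\Omega)}^2$ as integrals against $\psi'(s) \, ds$ of the unweighted norms on $\Omega_s$.

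The second step is to insert the $\epsilon$-uniform basic estimate pointwise in $s$ inside the right-hand integral and collect terms. The main (and really the only) obstacle is to justify that the constant in that basic estimate on $\Omega_s$ is uniform in $s \in [0, r_0)$ and stable under $\mathcal{C}^2$-perturbation of $g$; this is precisely what the setup of Section \ref{Weight function} arranges. Indeed, in the collar $V_\Omega$ the level sets $\{d_g = s\}$ foliate by hypersurfaces that are piecewise as regular as the metric and at worst $C^{1,1}$, with geometric bounds uniform across the $\mathcal{C}^2$-neighborhood $\mathcal{U}_0$, so inspection of the proof of Lemma \ref{basic est} (which only uses interior ellipticity of $L^*$ together with integration by parts on $\Omega_s$) shows that the constant depends only on $n, g, \Omega, \Sigma$ and not on $s$. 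Granting this uniformity, the estimate
$$
\int_0^{r_0} \psi'(s) \|u\|^2_{H^2(\Omega_s)} \, ds \leq C \int_0^{r_0} \psi'(s) \left( \|L^* u\|^2_{\mathcal{L}^2(\Omega_s)} + \|u\|^2_{L^2(\Omega_s)} \right) ds
$$
translates directly via the Fubini identity into the desired weighted estimate, after taking square roots.
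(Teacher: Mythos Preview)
Your proposal is correct and is precisely the approach the paper invokes: the paper does not give any details beyond citing the ``standard integration technique discussed in \cite{C}*{Theorem 3}'' together with the remark that the level sets of $d_g$ are $C^{1,1}$, and your layer-cake/Fubini argument against the measure $\psi'(s)\,ds$ applied to the $\epsilon$-uniform basic estimate on $\Omega_s$ is exactly that technique spelled out. The only cosmetic point is that, since the $H^2_\rho$-norm here is defined as a sum (not $\ell^2$) of $\|D^j u\|_{\mathcal{L}^2_\rho}$, you should integrate the squared estimate component-by-component and then sum, which of course only affects the constant.
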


Using Lemma \ref{no kernel} and the variational method, we get our first Poincar\'e-type inequality.

\begin{thm}[Poincar\'e-type inequality for $L^*$]\label{main est}
There is a constant $C = C(n, g, \Omega, \Sigma, \rho)$, uniform for metrics $\mathcal{C}^{k+4}$-near $g$, so that for $u \in H^2_{0, \rho} (\Omega)$, we have
$$
\|u\|_{H^2_\rho\left(\Omega\right)} \leq C\left\|L^* u\right\|_{\mathcal{L}^2_\rho\left(\Omega\right)}.
$$
\end{thm}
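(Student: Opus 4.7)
My approach is the standard contradiction/compactness argument. By Corollary \ref{basic est2} applied with $\epsilon = 0$, we have
$$\|u\|_{H^2_\rho(\Omega)} \leq C\bigl(\|L^* u\|_{\mathcal{L}^2_\rho(\Omega)} + \|u\|_{L^2_\rho(\Omega)}\bigr),$$
so it suffices to absorb the $L^2_\rho$ term, i.e.\ to prove a bound of the form $\|u\|_{L^2_\rho} \leq C\|L^* u\|_{\mathcal{L}^2_\rho}$ for $u \in H^2(\Omega)$ with $u = u_\nu = 0$ on $\Sigma$. Suppose this fails. Then there is a sequence $u_k \in H^2(\Omega)$ with $u_k = (u_k)_\nu = 0$ on $\Sigma$, normalized by $\|u_k\|_{L^2_\rho} = 1$, while $\|L^* u_k\|_{\mathcal{L}^2_\rho} \to 0$. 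The basic estimate then bounds $\{u_k\}$ in $H^2_\rho(\Omega)$, so after passing to a subsequence we obtain a weak limit $u \in H^2_\rho(\Omega)$ with $L^* u = 0$ in the distributional sense.

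The main technical step is to upgrade the weak convergence to strong convergence in $L^2_\rho$; this is where I expect the main difficulty, since compact Sobolev embeddings are not automatic in weighted spaces with an exponentially degenerate weight. The plan is a Hardy-type substitution: set $v_k = \rho^{1/2} u_k$, so that $\|v_k\|_{L^2(\Omega)} = \|u_k\|_{L^2_\rho(\Omega)}$. Differentiating and using Proposition \ref{C-H estimate}(ii), $|D\rho|^2/\rho \leq C \phi^{-2} \rho$, which is uniformly bounded on $\Omega$ because $\rho$ decays exponentially while $\phi^{-2}$ grows only polynomially toward $\partial\Omega\setminus\Sigma$. This gives $\|Dv_k\|_{L^2(\Omega)} \leq C(\|u_k\|_{H^2_\rho} + \|u_k\|_{L^2_\rho})$, hence $\{v_k\}$ is bounded in $H^1(\Omega)$. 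By the Rellich--Kondrachov theorem, $v_k \to v$ strongly in $L^2(\Omega)$ along a further subsequence, and therefore $u_k \to \rho^{-1/2}v =: u$ strongly in $L^2_\rho(\Omega)$. In particular $\|u\|_{L^2_\rho} = 1$, so $u \not\equiv 0$.

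It remains to check the boundary conditions and conclude. Near the interior of $\Sigma$, the weight $\rho$ is bounded away from $0$ and $\infty$ (since $\Sigma$ is separated from $\partial\Omega\setminus\Sigma$ except along the lower-dimensional set $\partial\Sigma$), so locally $H^2_\rho = H^2$ and the standard trace theorem applies. Weak convergence $u_k \rightharpoonup u$ in $H^2$ on any such neighborhood yields weak trace convergence of $u_k$ and $(u_k)_\nu$ in $H^{3/2}$ and $H^{1/2}$ respectively on relatively compact subsets of $\mathrm{int}(\Sigma)$; since the traces are all zero, the limiting traces vanish on $\mathrm{int}(\Sigma)$, hence on $\Sigma$. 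Thus $u \in H^2_{\mathrm{loc}}(\Omega)$ satisfies $L^* u = 0$ in $\Omega$ with $u = u_\nu = 0$ on $\Sigma$, so by Lemma \ref{no kernel} we must have $u \equiv 0$, contradicting $\|u\|_{L^2_\rho} = 1$. Uniformity of $C$ for metrics $\mathcal{C}^2$-near $g$ follows from the corresponding uniformity in Corollary \ref{basic est2} and the fact that the weight function, as discussed in the remark at the end of Section \ref{weighted Holder space}, may be fixed independently of the metric.
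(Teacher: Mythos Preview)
Your approach is correct and closely parallels the paper's, though phrased differently. The paper proves the inequality by minimizing the functional $\mathcal{F}_0(u)=\|L^*u\|_{\mathcal{L}^2_\rho}^2$ over the constraint set $\{u\in H^2_\rho:\|u\|_{L^2_\rho}=1,\ u=u_\nu=0\text{ on }\Sigma\}$, invoking coercivity (from Corollary~\ref{basic est2}) to obtain a minimizer and then Lemma~\ref{no kernel} to rule out a vanishing infimum. Your contradiction--compactness argument is the standard dual formulation of this: the minimizing sequence in the paper's proof is precisely your sequence $u_k$, and the step ``coercivity $\Rightarrow$ infimum attained in $\mathcal{A}$'' that the paper leaves implicit is exactly the compact embedding $H^2_\rho\hookrightarrow L^2_\rho$ that you spell out via the substitution $v_k=\rho^{1/2}u_k$ and Rellich. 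In that sense your write-up is more explicit about where the weight enters.

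There is, however, one genuine slip in your justification of the key bound. From $|D\rho|^2/\rho\le C\phi^{-2}\rho$ and the (correct) observation that $\phi^{-2}\rho$ is uniformly bounded, you only obtain $\int_\Omega |D\rho|^2\rho^{-1}u_k^2\le C\int_\Omega u_k^2$, i.e.\ control by the \emph{unweighted} $L^2$ norm of $u_k$, which is not a priori bounded along the sequence (you only normalized $\|u_k\|_{L^2_\rho}=1$, and $\rho\le 1$). The inequality $\|Dv_k\|_{L^2}\le C\|u_k\|_{H^2_\rho}$ is true, but it needs either the Hardy-type estimate $\int_\Omega d^{-4}\rho\,u^2\le C\|u\|_{H^1_\rho}^2$ from \cite{C-H}, or more directly the estimate $\|\rho^{1/2}u\|_{H^2(\Omega)}\le C\|u\|_{H^2_\rho(\Omega)}$ (also from \cite{C-H}; the paper invokes it verbatim in the proof of Theorem~\ref{compact1}). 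Once you cite that, your argument goes through.
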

\begin{proof}
Recall from Proposition \ref{trace H0rho}, if $u \in H_{0, \rho}^2(\Omega)$, then $u = u_\nu = 0$ on $\Sigma$.

If $\|u\|_{L_\rho^{2}(\Omega)} = 0$, then $u=0$ and the inequality is trivial. Now let us assume $\|u\|_{L_\rho^{2}(\Omega)} > 0$. We may further assume $\|u\|_{L_\rho^{2}(\Omega)} = 1$ by normalization. Consider the functional $\mathcal{F}_0$ defined on $H_{0, \rho}^2(\Omega)$ by
$$
\mathcal{F}_0(u)= \|L^*u\|^2_{\mathcal{L}_\rho^{2}(\Omega)},
$$
and minimize it over
$$
\mathcal{A} = \{u\in H_{0, \rho}^2(\Omega): \|u\|_{L_\rho^2(\Omega)} = 1\}.
$$
By Corollary \ref{basic est2}, $\left\|L^{*} u\right\|_{\mathcal{L}_\rho^{2}(\Omega)} \ge C^{-1}\|u\|_{H_\rho^{2}(\Omega)} - 1$, so when $\|u\|_{H_\rho^{2}(\Omega)} >> 1$ this implies
$$
\mathcal{F}_0(u) \ge \left(C^{-1}\|u\|_{H_\rho^{2}(\Omega)} - 1\right)^2.
$$
Thus $\mathcal{F}_0$ satisfies the coercivity condition, and attains its infimum in $\mathcal{A}$. 

Suppose the functional is minimized by $\mathcal{F}_0(u_0) = \lambda$ over $\mathcal{A}$. It is obvious that $\lambda \ge 0$. Furthermore, if $\lambda = 0$, we would have $\|L^*u_0\|^2_{\mathcal{L}_\rho^2} = 0$; by Lemma \ref{no kernel}, $u_0\equiv0$, which is a contradiction to $\|u_0\|_{L_\rho^2} = 1$. Thus we must have $\lambda > 0$. So,
$$
\|u\|_{H_\rho^{2}} \leq C\left(\left\|L^{*} u\right\|_{\mathcal{L}_\rho^{2}}+1\right) \le C\left(\left\|L^{*} u\right\|_{\mathcal{L}_\rho^{2}}+\frac1{\sqrt{\lambda}} \left\|L^{*} u\right\|_{\mathcal{L}_\rho^{2}}\right) \le C'\left\|L^{*} u\right\|_{\mathcal{L}_\rho^{2}}.
$$
\end{proof}

The following theorem using the no-kernel condition is our second Poincar\'e-type inequality. 

\begin{thm}[Poincar\'e-type inequality for $\Phi^*$]\label{Poincare}
Assuming $\operatorname{ker}\Phi^*=\{0\}$, there is a constant $C = C(n, g, \Omega, \Sigma, \rho)$, uniform for metrics $\mathcal{C}^{k+4}$-near $g$, so that for $u \in H^2_{\rho} (\Omega)$,
$$
\|u\|_{H^{2}_\rho\left(\Omega\right)} \leq C\left\|\Phi^* u\right\|_{\mathcal{L}_\rho^{2}}.
$$
\end{thm}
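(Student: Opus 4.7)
The plan is to follow the variational strategy of Theorem \ref{main est} verbatim, replacing Lemma \ref{no kernel} by the standing no-kernel hypothesis $\ker \Phi^* = 0$. The inequality is trivial when $\|u\|_{L^2_\rho} = 0$, and is homogeneous in $u$, so it suffices to work on the constraint $\|u\|_{L^2_\rho(\Omega)} = 1$ and minimize
$$
\mathcal{F}(u) = \|\Phi^* u\|^2_{\mathcal{L}^2_\rho} = \|L^* u\|^2_{\mathcal{L}^2_\rho(\Omega)} + \|u_\nu \hat g - u h\|^2_{\mathcal{L}^2_\rho(\Sigma)}
$$
over $\mathcal{A} = \{u \in H^2_\rho(\Omega) : \|u\|_{L^2_\rho(\Omega)} = 1\}$. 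Coercivity of $\mathcal{F}$ on $\mathcal{A}$ follows directly from Corollary \ref{basic est2}, since
$$
\|u\|_{H^2_\rho} \le C\bigl(\|L^* u\|_{\mathcal{L}^2_\rho} + \|u\|_{L^2_\rho}\bigr) \le C\bigl(\sqrt{\mathcal{F}(u)} + 1\bigr).
$$

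The next step is to extract a minimizer by the direct method. A minimizing sequence is bounded in $H^2_\rho$ by the coercivity estimate, so one can pass to a subsequence converging weakly in $H^2_\rho$; Rellich-type compactness (the weight degenerates only at $\partial\Omega \setminus \Sigma$, while away from it $\rho$ is bounded above and below, so the embedding into $L^2_\rho$ is compact up to a standard diagonal argument) upgrades this to strong $L^2_\rho$-convergence, preserving the constraint, and weak lower semicontinuity of the quadratic form $\mathcal{F}$ produces a minimizer $u_0 \in \mathcal{A}$ with $\mathcal{F}(u_0) = \lambda := \inf_{\mathcal{A}} \mathcal{F}$. Since $u_0 \in H^2_\rho \subset H^2_{\operatorname{loc}}$ and the trace $u_0|_\Sigma$ together with its normal derivative are well-defined (because $\rho \equiv 1$ away from the corner $\Sigma \cap \overline{\partial\Omega \setminus \Sigma}$), if $\lambda = 0$ we would have $\Phi^* u_0 = 0$ and hence $u_0 \equiv 0$ by hypothesis, contradicting $\|u_0\|_{L^2_\rho} = 1$. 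Therefore $\lambda > 0$, and combining with the coercivity bound yields
$$
\|u\|_{H^2_\rho} \le C\bigl(1 + \lambda^{-1/2}\bigr)\|\Phi^* u\|_{\mathcal{L}^2_\rho}
$$
for all $u \in \mathcal{A}$; the general inequality follows by rescaling.

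The main obstacle is the attainment step: one needs Rellich-type compactness for the weighted embedding $H^2_\rho \hookrightarrow L^2_\rho$ so that the $L^2_\rho$-constraint is preserved under weak $H^2_\rho$-convergence, and a reasonable trace theory near the corner $\Sigma \cap \overline{\partial\Omega \setminus \Sigma}$ so that the boundary component $u_\nu \hat g - u h$ of $\Phi^*$ is a continuous functional on $H^2_\rho$ whose vanishing, together with $L^* u = 0$, can be fed into the kernel hypothesis. Both are technical rather than conceptual: the decay of $\rho$ at $\partial\Omega \setminus \Sigma$ is designed precisely to suppress contributions there, and in the interior and near the smooth part of $\Sigma$ the weighted and unweighted norms are equivalent. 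Uniformity in metrics $\mathcal{C}^2$-close to $g$ is inherited from that of Corollary \ref{basic est2}, provided one fixes a single weight $\rho_{g_0}$ throughout, as already explained in the remark following Proposition \ref{operator Holder}.
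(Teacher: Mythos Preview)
Your variational approach is plausible but differs from the paper's, and the gap you flag is precisely the one the paper's route is designed to avoid. The paper does \emph{not} minimize directly in $H^2_\rho(\Omega)$ for this theorem; instead it first proves the unweighted estimate
\[
\|u\|_{H^2(\Omega_\epsilon)} \le C\|\Phi^* u\|_{\mathcal{L}^2(\Omega_\epsilon)}
\]
on the truncated domains $\Omega_\epsilon$ with $C$ independent of small $\epsilon$, by a contradiction argument using ordinary Rellich compactness on compact $\Omega_\epsilon$ together with Lemma~\ref{basic est}. Only then does it pass to the weighted inequality via Corvino's integration technique (\cite{C}*{Theorem 3}). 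The point is that on each $\Omega_\epsilon$ the weight is bounded above and below, so no weighted compactness is needed; uniformity in $\epsilon$ is what carries the information across to the full weighted space.

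Your direct route needs two ingredients you do not establish: compactness of $H^2_\rho(\Omega)\hookrightarrow L^2_\rho(\Omega)$ (so that the constraint $\|u\|_{L^2_\rho}=1$ survives weak $H^2_\rho$-limits), and uniformity of the infimum $\lambda=\lambda(g)>0$ for metrics $\mathcal{C}^2$-near $g$ (your claim that this is ``inherited'' from Corollary~\ref{basic est2} is not right, since $\lambda$ also encodes the no-kernel condition and the boundary term). The first is in fact true for the exponential weight---it follows from Hardy-type inequalities such as $\int u^2 d^{-4}\rho \le C\|u\|_{H^1_\rho}^2$ in \cite{C-H}, which give tightness near $\partial\Omega\setminus\Sigma$---but you would need to state and use this, not wave at it. The second requires a separate perturbation argument. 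The paper's $\Omega_\epsilon$ approach handles both issues simultaneously and with less machinery, which is presumably why it was chosen here even though the direct minimization template was used for Theorem~\ref{main est} (where the boundary term is absent and the constraint set is simpler).
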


The argument follows the strategy of \cite{C}, with necessary adaptations. Specifically, the main theorem is a direct consequence of the integration technique in \cite{C}*{Theorem 3}, once the following uniform estimate is established.
\begin{prop}
Assuming $\operatorname{ker}\Phi^*=\{0\}$, for $\epsilon\ge0$ sufficiently small, and for any $u\in H^2_{\operatorname{loc}}(\Omega\cup\Sigma)$,
$$
\|u\|_{H^{2}\left(\Omega_{\epsilon}\right)} \leq C\left\|\Phi^* u\right\|_{\mathcal{L}^{2}\left(\Omega_{\epsilon}\right)}
$$
where $C$ is independent of $\epsilon$ small and uniform for metrics $\mathcal{C}^{k+4}$-near $g$.
\end{prop}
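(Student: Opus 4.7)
The plan is to argue by contradiction, combining the basic elliptic estimate for $L^*$, the no-kernel hypothesis on $\Phi^*$, and a thin-shell estimate that rules out $L^2$ concentration near $\p\Omega\setminus\Sigma$. The basic estimate stated just after Lemma \ref{basic est} (with constant uniform in $\epsilon$) gives
$$\|u\|_{H^2(\Omega_\epsilon)} \le C\bigl(\|L^*u\|_{\mathcal{L}^2(\Omega_\epsilon)} + \|u\|_{L^2(\Omega_\epsilon)}\bigr),$$
and since $\|L^*u\|_{\mathcal{L}^2(\Omega_\epsilon)}\le\|\Phi^*u\|_{\mathcal{L}^2(\Omega_\epsilon)}$, it suffices to prove the absorbed inequality
$$\|u\|_{L^2(\Omega_\epsilon)}\le C\|\Phi^*u\|_{\mathcal{L}^2(\Omega_\epsilon)}$$
with $C$ independent of $\epsilon\ge 0$ small.

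If this fails, one finds $\epsilon_k\to 0^+$ and $u_k\in H^2_{\operatorname{loc}}(\Omega)$ with $\|u_k\|_{L^2(\Omega_{\epsilon_k})}=1$ but $\|\Phi^*u_k\|_{\mathcal{L}^2(\Omega_{\epsilon_k})}\to 0$. The basic estimate then gives a uniform bound $\|u_k\|_{H^2(\Omega_{\epsilon_k})}\le C'$. For each fixed $\delta>0$, once $\epsilon_k<\delta$ we have $\Omega_\delta\subset\Omega_{\epsilon_k}$ and $u_k$ uniformly bounded in $H^2(\Omega_\delta)$; Rellich compactness plus a diagonal argument over $\delta_j\to 0^+$ produces a subsequence $u_k\to u$ weakly in $H^2_{\operatorname{loc}}(\Omega)$ and strongly in $L^2_{\operatorname{loc}}(\Omega)$ for some $u\in H^2_{\operatorname{loc}}(\Omega)$. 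Because the interior of $\Sigma$ is exhausted by $\Sigma\cap\overline{\Omega_\delta}$ as $\delta\to 0$ (recall $\Sigma$ is at positive distance from $\p\Omega\setminus\Sigma$) and the traces of both $u_k$ and $(u_k)_\nu$ pass to the limit by continuity of the trace map on each such subdomain, we may pass to the limit in $L^*u_k\to 0$ and $(u_k)_\nu\hat g - u_k h\to 0$, obtaining $\Phi^*u=0$. Proposition \ref{kernel C2} and the standing assumption $\operatorname{ker}\Phi^*=0$ then force $u\equiv 0$.

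The main obstacle is upgrading $u\equiv 0$ into a contradiction with $\|u_k\|_{L^2(\Omega_{\epsilon_k})}=1$, since $L^2_{\operatorname{loc}}$ convergence by itself allows the mass of $u_k$ to concentrate in the thin shell $S_{\epsilon_k,\delta}:=\Omega_{\epsilon_k}\setminus\Omega_\delta$ adjacent to $\p\Omega\setminus\Sigma$. To preclude this uniformly in $k$, I would invoke the Sobolev embedding $H^2(\Omega_{\epsilon_k})\hookrightarrow L^p(\Omega_{\epsilon_k})$ for some $p>2$ depending on $n$ (any $p<\infty$ when $n\le 4$, and $p=\tfrac{2n}{n-4}$ when $n\ge 5$); the embedding constant is uniform in small $\epsilon_k$ because the domains $\Omega_{\epsilon_k}$ are smoothly diffeomorphic to one another via the short-time flow of $-\nabla d_g$ in the collar. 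H\"older's inequality together with the measure bound $|S_{\epsilon_k,\delta}|\le C\delta$ (the shell sits inside a regular collar of $\p\Omega\setminus\Sigma$ of thickness $\le\delta$) then gives
$$\|u_k\|_{L^2(S_{\epsilon_k,\delta})}\le |S_{\epsilon_k,\delta}|^{\frac12-\frac1p}\|u_k\|_{L^p(\Omega_{\epsilon_k})}\le C\delta^{\alpha}$$
for some $\alpha=\alpha(n)>0$ uniform in $k$. The orthogonal splitting
$$1=\|u_k\|_{L^2(\Omega_\delta)}^2+\|u_k\|_{L^2(S_{\epsilon_k,\delta})}^2$$
then forces $1\le 0+C\delta^{2\alpha}$ upon letting $k\to\infty$ first and $\delta\to 0$ second, the required contradiction. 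Once the proposition is in hand, Theorem \ref{Poincare} follows by the standard integration-over-level-sets-of-$d_g$ technique from \cite{C}*{Theorem 3}; the subtle point in the above argument is the uniformity of both the Sobolev constant and the shell measure bound across the family $\{\Omega_{\epsilon_k}\}$, which is where care is most needed.
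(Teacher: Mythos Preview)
Your argument is correct, but it takes a different route from the paper's. The paper normalizes in $H^2(\Omega_{\epsilon_i})$ rather than $L^2$, and then \emph{extends} each $u_i$ to $H^2(\Omega)$ with a uniform bound (using that $\partial\Omega_{\epsilon_i}$ is $C^{1,1}$ uniformly). This single extension step lets the paper apply Rellich on the fixed domain $\Omega$ once and for all, yielding strong $H^1(\Omega)$ convergence and trace convergence on all of $\Sigma$ simultaneously; the mass-escape issue you worry about simply never arises. The contradiction is then closed not by a thin-shell estimate but by reapplying the basic estimate of Lemma~\ref{basic est} to $u_i-\varphi$ on $\Omega_{\epsilon_i}$, which forces $\|u_i-\varphi\|_{H^2(\Omega_{\epsilon_i})}<\tfrac12$ and hence $\varphi\not\equiv 0$.

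Your approach instead stays on the varying domains and controls the shell $S_{\epsilon_k,\delta}$ via Sobolev embedding and H\"older. This works, but it costs you the extra verification of a uniform Sobolev constant across the family $\{\Omega_{\epsilon_k}\}$, which you correctly flag as the delicate point. One small slip: your parenthetical ``recall $\Sigma$ is at positive distance from $\partial\Omega\setminus\Sigma$'' is false in the paper's setting, since $\Sigma$ may have boundary meeting $\overline{\partial\Omega\setminus\Sigma}$ (cf.\ Figure~\ref{fig: weight function}). Fortunately you only use the weaker, correct statement that the interior of $\Sigma$ is exhausted by $\Sigma\cap\overline{\Omega_\delta}$, so the argument survives. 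The paper's extension trick is worth internalizing: it replaces the Sobolev--H\"older shell estimate with a one-line application of uniform $H^2$ extension.
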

\begin{proof}
Since $\Phi^*$ has a trivial $H^2_{\operatorname{loc}}$-kernel in $\Omega$, it follows from the same reasoning as in \cite{C}*{Proposition 3.2} that for sufficiently small $\epsilon > 0$, the operator $\Phi^*$ also has no $H^2$-kernel in $\Omega_{\epsilon}$. Recall that in our setting $\varnothing\neq\Sigma\cap\Omega_{\epsilon}\subset\Omega_{\epsilon}$, so the restrictions and extensions in the proof keep the boundary condition. Furthermore, Proposition~\ref{kernel C2} guarantees the existence of pointwise boundary values.

By a standard application of Rellich’s lemma, we have for all $\epsilon\ge0$ small enough and $u\in H^2(\Omega_{\epsilon})$,
$$
\|u\|_{H^{2}\left(\Omega_{\epsilon}\right)} \leq C\left\|\Phi^* u\right\|_{\mathcal{L}^{2}\left(\Omega_{\epsilon}\right)}.
$$
We claim furthermore that the constant $C$ above is independent of $\epsilon$ small. Suppose the contrary. Then there is a sequence $\epsilon_i \downarrow 0$ and $u_i \in H^2(\Omega_i)$ such that 
$$
\|u_i\|_{H^{2}\left(\Omega_i\right)} > i\left\|\Phi^* (u_i)\right\|_{\mathcal{L}^{2}\left(\Omega_i\right)}.
$$
Here we denote $\Omega_i = \Omega_{\epsilon_i}$ for simplicity. By normalizing $\|u_i\|_{H^{2}\left(\Omega_i\right)}$ to 1, we have $\left\|\Phi^* (u_i)\right\|_{\mathcal{L}^{2}\left(\Omega_i\right)}<1/i$. Since $\p \Omega_i$ is $C^{1,1}$, we can extend $u_i$ to $H^2(\Omega)$ in such a way that for all $i$, $\|u_i\|_{H^{2}\left(\Omega\right)}\le C_1$.

Since the inclusion $H^2(\Omega) \hookrightarrow H^1(\Omega)$ is compact, by relabeling indices we have a function $\varphi\in H^1(\Omega)$ with $u_i\to\varphi$ in $H^1(\Omega)$; similarly, by Corollary \ref{compact cor} we have up to a subsequence $\left.u_i\right|_\Sigma$ also converges in $L^2(\Sigma)$, and by the uniqueness, it converges to $\left.\varphi\right|_\Sigma$. Moreover, for any $\eta\in\mathcal{C}_c^\infty(\Omega\cup\Sigma)$,
\begin{align*}
0=\lim _{i \rightarrow \infty}\left\langle\eta, \Phi^{*} (u_{i})\right\rangle_{\mathcal{L}^{2}} & = \lim _{i \rightarrow \infty}\left(\int_\Omega L(\eta)u_i \, d \mu_g + \int_\Sigma 2u_i\dot{H}(\eta) d\sigma_g\right)\\
		& = \int_\Omega L(\eta)\varphi \, d \mu_g + \int_\Sigma 2\varphi\dot{H}(\eta) d\sigma_g\\
		& = \left\langle\eta, \Phi^*(\varphi)\right\rangle_{\mathcal{L}^{2}}
\end{align*}
we have $\Phi^*(\varphi) = 0$ weakly. By elliptic regularity, $\varphi\in H^2_\text{loc}(\Omega\cup\Sigma)$. Then by the no-kernel condition of $\Phi^*$, $\varphi$ must be zero. 

To obtain a contradiction, we demonstrate that $\varphi$ cannot be zero. By Lemma \ref{basic est}, 
\begin{equation}\label{basic lemma}
\|u\|_{H^{2}(\Omega_\epsilon)} \leq C\left(\left\|L^{*} u\right\|_{\mathcal{L}^{2}(\Omega_\epsilon)}+\|u\|_{L^{2}(\Omega_\epsilon)}\right),
\end{equation}
where $C$ is independent of $\epsilon$ small and uniform for metrics $\mathcal{C}^{k+4}$-near $g$. By the $H^1$-convergence of $\{u_i\}$, there is an $i_1$ so that for $i \ge i_1$, 
$$\|u_i - \varphi\|_{L^2(\Omega)} < \frac1{4C}.$$
Moreover, we can choose $i_1$ large enough so that $i \ge i_1$ implies 
$$\|L^*(u_i - \varphi)\|_{\mathcal{L}^2(\Omega_i)} \le \|\Phi^*(u_i - \varphi)\|_{\mathcal{L}^2(\Omega_i)} =  \|\Phi^*(u_i)\|_{\mathcal{L}^2(\Omega_i)} < \frac1{4C}.$$
Plugging $(u_i - \varphi)$ into the above estimate (\ref{basic lemma}) we get for $i$ large
$$\|u_i - \varphi\|_{H^2(\Omega_i)} < \frac12.$$ 
Note we have strongly used the independence of $C$ on $i$ large. This shows that $\varphi$ cannot be zero by the normalization of $u_i$. Therefore, the constant $C$ must be independent of $\epsilon$.
\end{proof}

\subsection{The Dirichlet problems and the space $\mathcal{D}$}\label{sec4.2}
\begin{thm}[The Dirichlet problem 1]\label{Dirichlet1}
For $f \in \left(H_{0, \rho}^{2}(\Omega)\right)^*$, there is a unique solution $u \in H_{0, \rho}^2(\Omega)$ to
\begin{align}\label{Dirichlet eqn2 weak}
L(\rho L^*u)  = f \quad &\text{in } \Omega.
\end{align}
Moreover, there is a constant $C = C(n, g, \Omega, \Sigma, \rho)$ uniform for metrics $\mathcal{C}^{k+4}$-near $g$ such that 
\begin{equation}\label{DP}
    \|u\|_{H_{\rho}^2(\Omega)} \le C\|f\|_{\left(H_{0, \rho}^{2}(\Omega)\right)^*}.
\end{equation}
\end{thm}
\begin{proof}
From the density of $C^\infty_c(\Omega)$ in $H_{0, \rho}^2(\Omega)$, for $u \in H_{\rho}^2(\Omega)$ and $v \in H_{0, \rho}^2(\Omega)$, we have
$$
\left<L(\rho L^*u), v\right> = \int_\Omega \left<\rho L^*u, L^*v\right> d \mu_g.
$$
Note that this also implies $L(\rho L^*u) \in \left(H_{0, \rho}^{2}(\Omega)\right)^*$, as
$$
\left|\left<L(\rho L^*u), v\right>\right| \le \|L^*u\|_{L_{\rho}^2(\Omega)}\cdot \|L^*v\|_{L_{\rho}^2(\Omega)} \le C\|u\|_{H_{\rho}^2(\Omega)}\cdot \|v\|_{H_{\rho}^2(\Omega)},
$$
where $C$ depends only on the coefficients of $L^*$. 

For the uniqueness, assume $u_1, u_2 \in H_{0, \rho}^2(\Omega)$ are both solutions. Then $w=u_1 - u_2\in H_{0, \rho}^2(\Omega)$ satisfies $L(\rho L^*w) = 0$ in $\Omega$. Since
$$
0 = \left<L(\rho L^*w), w\right> = \int_{\Omega}\|L^*w\|^2\rho \, d \mu_g,
$$
we have $L^*w = 0$ in $\Omega$. Then by Lemma \ref{no kernel}, $w\equiv0$.

We will use the standard variational method to show the existence, which is similar to the proof of Theorem \ref{main est}. Consider the functional $\mathcal{F}_1$ defined on $H_{0, \rho}^2(\Omega)$ by
$$
\mathcal{F}_1(u)=\frac12 \|L^*u\|^2_{\mathcal{L}_\rho^{2}(\Omega)}-\left<f,u\right>.
$$
By Theorem \ref{main est}, $\left\|L^* u\right\|_{\mathcal{L}^2_\rho(\Omega)} \ge \frac1C\|u\|_{H^2_\rho(\Omega)}$, so we have
\begin{equation}\label{F1}
\mathcal{F}_1(u) \geq \frac1{2C^2}\|u\|^2_{H^2_\rho(\Omega)} - \|f\|_{\left(H_{0, \rho}^{2}(\Omega)\right)^*}\cdot\|u\|_{H^2_\rho(\Omega)}.
\end{equation}
Thus $\mathcal{F}_1$ satisfies the coercivity condition and attains its infimum. Suppose the functional is minimized by $\mathcal{F}_1(u_0) = \lambda$, and let $\eta\in H_0^2(\Omega)$. Then
\begin{align*}
0 & =\left.\frac{d}{d t}\right|_{t=0} \mathcal{F}_1(u_0+t \eta)\\
		& =\left.\frac{d}{d t}\right|_{t=0} \left[\int_{\Omega}\frac12\|L^{*}(u_0+t \eta)\|^{2}\rho \,d \mu_g- \left<f, u_0+t \eta\right>\right]\\
		& = \left.\frac{d}{d t}\right|_{t=0}\left[\int_{\Omega} \frac12\sum_{\alpha, \beta}\left(L^* u_0+t L^* \eta\right)_{\alpha\beta}^{2}\rho \, d \mu_{g}\right] - \left<f, \eta\right>\\
		& = \int_{\Omega}\left<\rho L^*u_0, L^* \eta\right> d \mu_g - \left<f, \eta\right>\\
		& = \left<L(\rho L^*u_0) - f, \eta\right>.
\end{align*}
This means $u_0\in H_{0, \rho}^2(\Omega)$ is a weak solution.

Finally, the infimum $\lambda = \mathcal{F}_1(u_0) \le\mathcal{F}_1(0) = 0$, so (\ref{F1}) gives us
$$
0\ge\mathcal{F}_1(u_0) \geq \frac1{2C^2}\|u_0\|^2_{H^2_\rho(\Omega)} - \|f\|_{\left(H_{0, \rho}^{2}(\Omega)\right)^*}\cdot\|u_0\|_{H^2_\rho(\Omega)}.
$$
This implies
$$
\|u_0\|_{H_{\rho}^2(\Omega)} \le 2C^2\|f\|_{\left(H_{0, \rho}^{2}(\Omega)\right)^*}.
$$
\end{proof}

Let us define 
$$\mathcal{D} \triangleq \left\{\hat{u}\in L^2_\rho(\Sigma): \exists u\in H^2_\rho(\Omega)\text{ such that }\gamma_0\left(u\rho^{\frac12}\right) = \hat{u}\rho^{\frac12} \text{ on }\Sigma\right\}.$$ 
By Corollary~\ref{trace cor}, if $\hat{u}\in\mathcal{D}$, then $\hat{u}\rho^{\frac12}\in H^{\frac32}(\Sigma)$. Furthermore, Corollary~\ref{trace Sigma' zero} implies that $\hat{u}\rho^{\frac12}$ can be extended by zero to all of $\p\Omega$ to a function in $H^{\frac32}(\p\Omega)$. The following Dirichlet problem shows that it suffices to work within the space $\mathcal{D}$, since there exists a one-to-one correspondence between functions defined on $\Sigma$ and those on $\Omega$.

\begin{thm}[The Dirichlet problem 2]\label{ext}
For each $\hat{u} \in \mathcal{D}$, there is a unique solution $u \in H_{\rho}^2(\Omega)$ to the following system of equations:
\begin{align}\label{Dirichlet}
\left\{\begin{aligned}
L(\rho L^*u) & = 0 \quad &\text{in } &\Omega\\ 
u & = \hat{u} &\text{on } &\Sigma\\
u_\nu & = 0 &\text{on } &\Sigma.
\end{aligned}\right.
\end{align}
\end{thm}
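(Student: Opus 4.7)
The plan is to separate Theorem \ref{ext} into existence and uniqueness, reducing uniqueness to the zero-data case of Theorem \ref{Dirichlet2} and obtaining existence either directly from the definition of $\mathcal{D}$ or by a subtraction argument.

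\emph{Existence.} This is essentially built into the definition of $\mathcal{D}$: by hypothesis there is some $u \in H^2_\rho(\Omega)$ with $u|_\Sigma = \hat{u}$, $u_\nu|_\Sigma = 0$, and $L(\rho L^* u) = 0$. To see this constructively, pick any bounded lifting $E\hat{u} \in H^2_\rho(\Omega)$ with the correct traces $(E\hat{u})|_\Sigma = \hat{u}$ and $(E\hat{u})_\nu|_\Sigma = 0$; set $f := -L(\rho L^*(E\hat{u})) \in H^{-2}_{\rho^{-1}}(\Omega)$ and apply Theorem \ref{Dirichlet2} to obtain $v \in H^2_\rho(\Omega)$ with $v = v_\nu = 0$ on $\Sigma$ solving $L(\rho L^* v) = f$. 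Then $u := v + E\hat{u}$ solves (\ref{Dirichlet}), and estimate (\ref{DP}) shows that the extension map $\hat{u} \mapsto u$ is bounded.

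\emph{Uniqueness.} Suppose $u_1, u_2 \in H^2_\rho(\Omega)$ both solve (\ref{Dirichlet}). Then $w := u_1 - u_2 \in H^2_\rho(\Omega)$ satisfies the zero-data problem
$$L(\rho L^* w) = 0 \text{ in } \Omega, \qquad w = w_\nu = 0 \text{ on } \Sigma,$$
which is precisely (\ref{Dirichlet eqn2}) with $f = 0$. The uniqueness part of Theorem \ref{Dirichlet2} immediately gives $w \equiv 0$. Equivalently, one can argue directly by pairing $L(\rho L^* w)$ against $w$ and invoking the integration by parts identity (which extends to such $w$ by the density result in Lemma \ref{density lemma}, exactly as in the proof of Theorem \ref{Dirichlet2}) to obtain
$$0 = \int_\Omega L(\rho L^* w)\, w\, d\mu_g = \int_\Omega \rho\, \|L^* w\|^2\, d\mu_g,$$
so $L^* w \equiv 0$; combined with $w = w_\nu = 0$ on $\Sigma$, Lemma \ref{no kernel} yields $w \equiv 0$.

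The main (and essentially only) technical subtlety is justifying the integration by parts formula at merely $H^2_\rho$ regularity, but this was already handled in the proof of Theorem \ref{Dirichlet2} via the density of smooth functions (Lemma \ref{density lemma}). Modulo that point, Theorem \ref{ext} is a direct consequence of Theorem \ref{Dirichlet2} together with the defining property of $\mathcal{D}$.
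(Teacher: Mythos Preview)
Your proof is correct and essentially identical to the paper's. The paper carries out exactly your subtraction argument: take any extension $u_0\in H^2_\rho(\Omega)$ with $(u_0)_\nu|_\Sigma=0$, set $f=-L(\rho L^*u_0)$, and invoke Theorem~\ref{Dirichlet2} to get both existence and uniqueness of $v=u-u_0$ simultaneously; your separation into existence (from the definition of $\mathcal{D}$, or constructively via the same subtraction) and uniqueness (via Theorem~\ref{Dirichlet2} with $f=0$) is just a minor reorganization of the same idea.
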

\begin{rmk}
Strictly speaking, the boundary conditions should be interpreted in the sense of trace. More precisely,
$$\left.\gamma_0\right|_\Sigma\left(u\rho^{\frac12}\right) = \hat{u}\rho^{\frac12} \quad\text{ and }\quad \left.\gamma_1\right|_\Sigma\left(u\rho^{\frac12}\right) = \hat{u}\left.D_\nu\left(\rho^{\frac12}\right)\right|_\Sigma.$$
\end{rmk}
\begin{proof}
Let $u_0 \in H_{\rho}^2(\Omega)$ be an extension of $\hat{u}$ satisfying the condition
$$\left.\gamma_1\right|_\Sigma(u_0\rho^{\frac12}) = \hat{u}\left.D_\nu\left(\rho^{\frac12}\right)\right|_\Sigma,$$ 
and denote $f \triangleq -L(\rho L^*u_0) \in \left(H_{0, \rho}^{2}(\Omega)\right)^*$. By Theorem~\ref{Dirichlet1}, there is a unique $v \in H_{0, \rho}^2(\Omega)$ satisfying $L(\rho L^*v) = f$. Then $u = u_0+v\in H_{\rho}^2(\Omega)$ solves (\ref{Dirichlet}).

For the uniqueness, assume $u_1, u_2 \in H_{\rho}^2(\Omega)$ are both solutions. Then $w=u_1 - u_2\in H_{\rho}^2(\Omega)$ satisfies $L(\rho L^*w) = 0$ in $\Omega$ and
$$
\left.\gamma_0\right|_\Sigma\left(w\rho^{\frac12}\right) = \left.\gamma_1\right|_\Sigma\left(w\rho^{\frac12}\right) = 0.
$$
It follows from Corollary~\ref{H0rho} that $w\in H_{0, \rho}^2(\Omega)$. Therefore, the same argument as in Theorem~\ref{Dirichlet1} gives $w\equiv0$.
\end{proof}

From now on, for $\hat{u}\in\mathcal{D}$ we will denote $u\in H_{\rho}^2(\Omega)$ to be the unique extension as a solution of the Dirichlet Problem (\ref{Dirichlet}).

Assuming $\operatorname{ker}\Phi^*=\{0\}$, let us define the inner product on $\mathcal{D}$ by
$$
\left<\hat{u},\hat{v}\right>_\mathcal{D} \triangleq \int_\Omega \left<L^*u,L^*v\right> \rho \, d\mu_g +  C_0\int_\Sigma \hat{u}\hat{v}\rho \, d\sigma_g,
$$
where $C_0 = \operatorname{sup}_\Sigma\|h\|^2 < \infty$. The inner product is constructed so that the self-adjoint operator discussed in the following subsection becomes unitary. Note that if $C_0 = \operatorname{sup}_\Sigma\|h\|^2= 0$, then $\Sigma$ is totally geodesic and the operator simplifies to $\Phi^*(u) = (L^*(u), 0)$. Consequently, we have $\ker\Phi^* = \ker L^*$ and $\|\Phi^*\| = \|L^*\|$. Therefore, all the discussion regarding the space $\mathcal{D}$ remains valid without the boundary term.


\begin{prop}
Assuming $\operatorname{ker}\Phi^*=\{0\}$, $\mathcal{D}$ is a Hilbert space.
\end{prop}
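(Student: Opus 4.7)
The plan is to verify the two pieces of ``Hilbert space'' separately---that $\langle\cdot,\cdot\rangle_\mathcal{D}$ is a genuine inner product and that $\mathcal{D}$ is complete---by transferring both properties through the Dirichlet extension of Theorem \ref{ext} to the well-understood Hilbert space $H^2_\rho(\Omega)$. Bilinearity and symmetry of $\langle\cdot,\cdot\rangle_\mathcal{D}$ are immediate; for positive-definiteness, if $\langle\hat{u},\hat{u}\rangle_\mathcal{D}=0$ then both nonnegative summands vanish, so $L^*u\equiv 0$ in $\Omega$ and $\hat{u}\equiv 0$ on $\Sigma$. Combined with the defining condition $u_\nu=0$ on $\Sigma$ for extensions of elements of $\mathcal{D}$, Lemma \ref{no kernel} forces $u\equiv 0$, hence $\hat{u}=0$.

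Next I would show that the $\mathcal{D}$-norm is equivalent to the $H^2_\rho(\Omega)$-norm of the Dirichlet extension $u$. The upper bound $\|\hat{u}\|_\mathcal{D}\le C\|u\|_{H^2_\rho(\Omega)}$ is routine: the interior term is controlled because $L^*$ is a second-order operator with coefficients bounded on $\overline{\Omega}$, while the boundary term is controlled by the weighted trace bound $\|\hat{u}\|_{L^2_\rho(\Sigma)}\le C\|u\|_{H^2_\rho(\Omega)}$ already recorded in the paragraph preceding the proposition. For the lower bound I would invoke Theorem \ref{Poincare}: since $u_\nu=0$ on $\Sigma$, the second component of $\Phi^*u$ reduces to $-uh$, and the choice $C_0=\sup_\Sigma\|h\|^2$ gives
$$\|\Phi^*u\|^2_{\mathcal{L}^2_\rho}=\|L^*u\|^2_{\mathcal{L}^2_\rho(\Omega)}+\int_\Sigma \|uh\|^2\rho\,d\sigma_g\le\|\hat{u}\|^2_\mathcal{D},$$
so Theorem \ref{Poincare} yields $\|u\|_{H^2_\rho(\Omega)}\le C\|\Phi^*u\|_{\mathcal{L}^2_\rho}\le C\|\hat{u}\|_\mathcal{D}$.

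Completeness then follows in the standard way. Given $\{\hat{u}_n\}$ Cauchy in $\mathcal{D}$, the norm equivalence makes $\{u_n\}$ Cauchy in $H^2_\rho(\Omega)$, so $u_n\to u$ there. The PDE $L(\rho L^*u)=0$ survives the limit because $u\mapsto L(\rho L^*u)$ is continuous from $H^2_\rho(\Omega)$ into $H^{-2}_{\rho^{-1}}(\Omega)$ (noting that $\|\rho L^*w\|_{\mathcal{L}^2_{\rho^{-1}}}=\|L^*w\|_{\mathcal{L}^2_\rho}$). To pass the boundary condition $u_\nu=0$ on $\Sigma$ to the limit, I would apply the standard trace theorem to $u_n\rho^{1/2}\to u\rho^{1/2}$ in $H^2(\Omega)$---using the bound $\|u\rho^{1/2}\|_{H^2(\Omega)}\le C\|u\|_{H^2_\rho(\Omega)}$ quoted from Corvino--Huang---and expand $(u_n\rho^{1/2})_\nu=(u_n)_\nu\rho^{1/2}+u_n(\rho^{1/2})_\nu$, letting $n\to\infty$ to force $u_\nu\rho^{1/2}=0$ on $\Sigma$.

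The main obstacle I anticipate is this last trace argument, since $\rho$ degenerates near $\partial\Sigma\subset\partial\Omega\setminus\Sigma$; I plan to handle it by exhausting $\Sigma$ with subdomains on which $\rho$ is bounded below, or equivalently by interpreting $u_\nu=0$ as the vanishing of the weighted trace $u_\nu\rho^{1/2}$, which is the natural formulation consistent with the weights implicit in the definition of $\mathcal{D}$. Once this is addressed, $\hat{u}:=u|_\Sigma$ lies in $\mathcal{D}$ with $\hat{u}_n\to\hat{u}$ in $\mathcal{D}$, completing the proof.
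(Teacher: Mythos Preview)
Your proposal is correct and follows essentially the same route as the paper: both establish the two-sided estimate $C^{-1}\|u\|_{H^2_\rho(\Omega)}\le\|\hat u\|_{\mathcal D}\le C\|u\|_{H^2_\rho(\Omega)}$ via Theorem~\ref{Poincare} and the bound $\|\Phi^*u\|_{\mathcal L^2_\rho}\le\|\hat u\|_{\mathcal D}$ (from $u_\nu=0$ and the choice of $C_0$), then push a Cauchy sequence through to $H^2_\rho(\Omega)$ and check that the limit lies in $\mathcal D$. You are in fact more explicit than the paper in two places---the positive-definiteness of $\langle\cdot,\cdot\rangle_{\mathcal D}$ (which the paper leaves implicit; note your argument as written assumes $C_0>0$, but the totally geodesic case $C_0=0$ is covered directly by $\ker\Phi^*=0$) and the preservation of $L(\rho L^*u)=0$ and $u_\nu=0$ under the $H^2_\rho$-limit (which the paper dismisses with ``it is easy to see''); your weighted trace strategy for the latter is the natural one and matches how the paper uses the Corvino--Huang bound $\|u\rho^{1/2}\|_{H^2(\Omega)}\le C\|u\|_{H^2_\rho(\Omega)}$ elsewhere.
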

\begin{proof}
For any Cauchy sequence $\hat{u}_j$ in $\mathcal{D}$, $u_j$'s are solutions of (\ref{Dirichlet}), so
\begin{align*}
\|\Phi^*(u_j-u_k)\|^2_{\mathcal{L}_\rho^{2}} & = \|L^*(u_j-u_k)\|^2_{\mathcal{L}^2_\rho(\Omega)}+  \int_\Sigma \|(u_j-u_k)_\nu \hat{g}-(\hat{u}_j-\hat{u}_k) h\|^2_{\hat{g}} \rho \, d\sigma_g\\
		& = \|L^*(u_j-u_k)\|^2_{\mathcal{L}^2_\rho(\Omega)}+  \int_\Sigma \|h\|^2(\hat{u}_j-\hat{u}_k)^2 \rho \, d\sigma_g\\
		& \le \|L^*(u_j-u_k)\|^2_{\mathcal{L}^2_\rho(\Omega)}+  C_0\|\hat{u}_j-\hat{u}_k\|^2_{L^2_\rho(\Sigma)}\\
		& = \|\hat{u}_j-\hat{u}_k\|^2_\mathcal{D}.
\end{align*}
Together with Theorem \ref{Poincare}, we get
$$
\|u_j-u_k\|_{H^{2}_\rho\left(\Omega\right)} \leq C\left\|\Phi^* (u_j-u_k)\right\|_{\mathcal{L}_\rho^{2}} \leq C\|\hat{u}_j-\hat{u}_k\|_\mathcal{D}.
$$
Thus, $u_j$ is a Cauchy sequence in $H_{\rho}^2(\Omega)$. Since $H_{\rho}^2(\Omega)$ is a Hilbert space, there is $u\in H_{\rho}^2(\Omega)$ such that $u_j\rightarrow u$ in $H_{\rho}^2(\Omega)$. Define $\hat{u}\rho^{\frac12} = \gamma_0(u\rho^{\frac12})$ so that $\hat{u}\in\mathcal{D}$, then by Corollary \ref{trace cor}, $\|\hat{u}_j-\hat{u}\|^2_{L^2_\rho(\Sigma)} = \|\gamma_0((u_j-u)\rho^{\frac12})\|^2_{L^2(\Sigma)}\le C\|u_j-u\|^2_{H^{2}_\rho\left(\Omega\right)}$. So
\begin{align*}
\|\hat{u}_j-\hat{u}\|^2_\mathcal{D}  & = \|L^*(u_j-u)\|^2_{\mathcal{L}^2_\rho(\Omega)} + C_0\|\hat{u}_j-\hat{u}\|^2_{L^2_\rho(\Sigma)}\\
	& \le C'\|u_j-u\|^2_{H^{2}_\rho\left(\Omega\right)} + CC_0\|u_j-u\|^2_{H^{2}_\rho\left(\Omega\right)}\\
	& = C''\|u_j-u\|^2_{H^{2}_\rho\left(\Omega\right)}\longrightarrow0.
\end{align*}
This means $\mathcal{D}$ is a Hilbert space.
\end{proof}

The above proof also gives us a useful estimate for the Dirichlet problem (\ref{Dirichlet}).
\begin{cor}\label{Dirichlet estimate}
Assuming $\operatorname{ker}\Phi^*=\{0\}$, for the Dirichlet problem (\ref{Dirichlet}) there is a constant $C = C(n, g, \Omega, \rho)$ uniform for metrics $\mathcal{C}^{k+4}$-near $g$ such that 
$$
C^{-1}\|u\|_{H^{2}_\rho\left(\Omega\right)}\le \|\hat{u}\|_\mathcal{D}\le C\|u\|_{H^{2}_\rho\left(\Omega\right)}.
$$
\end{cor}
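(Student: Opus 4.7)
Both inequalities are essentially already contained in the calculations carried out while proving that $\mathcal{D}$ is a Hilbert space, so the plan is to unpack those two chains of estimates.

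For the upper bound $\|\hat{u}\|_\mathcal{D} \le C\|u\|_{H^2_\rho(\Omega)}$, I would estimate each of the two terms in $\|\hat{u}\|_\mathcal{D}^2 = \int_\Omega \|L^*u\|^2\rho\,d\mu_g + C_0\int_\Sigma \hat{u}^2\rho\,d\sigma_g$ separately. The interior term is immediate, since $L^*$ is a second-order linear operator whose coefficients depend smoothly on $g$ and $\operatorname{Ric}_g$, giving $\|L^*u\|_{\mathcal{L}^2_\rho(\Omega)} \le C_1 \|u\|_{H^2_\rho(\Omega)}$ with $C_1$ uniform for metrics $\mathcal{C}^2$-near $g$. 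For the boundary term, I would use exactly the chain already displayed in the excerpt: the estimate $\|u\rho^{1/2}\|_{H^2(\Omega)} \le C\|u\|_{H^2_\rho(\Omega)}$ from \cite{C-H}, followed by the trace theorem applied to $u\rho^{1/2}$ to land in $H^1(\Sigma)$, and then the obvious bound $\|\hat{u}\rho^{1/2}\|_{L^2(\Sigma)} \le \|\hat{u}\rho^{1/2}\|_{H^1(\Sigma)}$. This yields $\int_\Sigma \hat{u}^2\rho\,d\sigma_g \le C^2\|u\|^2_{H^2_\rho(\Omega)}$, and combining gives the upper inequality.

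For the lower bound $C^{-1}\|u\|_{H^2_\rho(\Omega)} \le \|\hat{u}\|_\mathcal{D}$, I would combine the Poincaré-type inequality of Theorem \ref{Poincare} with the fact that $u_\nu = 0$ on $\Sigma$ by construction of the Dirichlet extension. The latter gives $\Phi^*u = (L^*u, -\hat{u}h)$, so
\begin{align*}
\|\Phi^*u\|^2_{\mathcal{L}^2_\rho}
  &= \|L^*u\|^2_{\mathcal{L}^2_\rho(\Omega)} + \int_\Sigma \|h\|^2\hat{u}^2\rho\,d\sigma_g \\
  &\le \|L^*u\|^2_{\mathcal{L}^2_\rho(\Omega)} + C_0\int_\Sigma \hat{u}^2\rho\,d\sigma_g = \|\hat{u}\|^2_\mathcal{D},
\end{align*}
using the definition $C_0 = \sup_\Sigma \|h\|^2$. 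Theorem \ref{Poincare} then delivers $\|u\|_{H^2_\rho(\Omega)} \le C\|\Phi^*u\|_{\mathcal{L}^2_\rho} \le C\|\hat{u}\|_\mathcal{D}$.

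There is no real obstacle here; the only thing to be careful about is the uniformity of constants across metrics $\mathcal{C}^2$-close to $g$, but this uniformity is already asserted in Theorem \ref{Poincare} and in the auxiliary estimate $\|u\rho^{1/2}\|_{H^2(\Omega)} \le C\|u\|_{H^2_\rho(\Omega)}$ from \cite{C-H}, so the uniformity of $C$ in the corollary is inherited. The statement is therefore essentially a bookkeeping consequence, worth stating because it will let one freely pass back and forth between the Dirichlet data viewpoint $\hat{u} \in \mathcal{D}$ and its extension $u \in H^2_\rho(\Omega)$ in the self-adjoint problem analyzed in the next subsection.
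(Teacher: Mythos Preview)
Your proposal is correct and matches the paper's approach exactly: the paper states this corollary as a direct byproduct of the proof that $\mathcal{D}$ is a Hilbert space, and the two chains of estimates you unpack (the $L^*$ bound plus trace-theorem argument for the upper bound, and the $u_\nu=0$ plus Theorem~\ref{Poincare} for the lower bound) are precisely the ones appearing there.
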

Moreover, since $C^\infty_c(\Omega\cup\Sigma)$ is dense in $H^2_\rho(\Omega)$ by Proposition~\ref{lem: density cpt supp}, it follows that
\begin{cor}
$C^\infty_c(\Sigma)$ is dense in $\mathcal{D}$.
\end{cor}

\begin{rmk}
Since $\mathcal{D}$ is a Hilbert space only if $\operatorname{ker}\Phi^*=\{0\}$, we will always assume $\operatorname{ker}\Phi^*=\{0\}$ from now on, and emphasize this assumption when needed.
\end{rmk}

\begin{rmk}\label{extension D}
It is worth noting that not every function in $H^{\frac32}(\Sigma)$ (or even in $H_0^{\frac32}(\Sigma)$) can be extended by zero to a function in $H^{\frac32}(\p\Omega)$; see \cite{L-M}*{p. 60, Theorem 11.4}. To address this issue, we will need to consider the finer space $H_{00}^{\frac32}(\Sigma)$; see \cite{L-M}*{p. 66, Theorem 11.7}. As discussed above, for any $\hat{u}\in\mathcal{D}$, the function $\hat{u}\rho^{\frac12}$ can be extended by zero to all of $\p\Omega$, and hence $\hat{u}\rho^{\frac12}\in H_{00}^{\frac32}(\Sigma)$.
\end{rmk}

\subsection{The self-adjoint problem}\label{sec4.3}
Define a self-adjoint operator $P: \mathcal{D}\to\mathcal{D}^*$ by
$$
\left<P(\hat{u}), \hat{v}\right> = \int_\Omega \left<L^*u, L^*v\right> \rho \, d\mu_g+ C_0\int_\Sigma \hat{u}\hat{v}\rho\, d\sigma_g = \left<P(\hat{v}), \hat{u}\right>,
$$
where $\hat{u}, \hat{v}\in\mathcal{D}$.

\begin{rmk}\label{P Cc form}
If $u, v \in C^\infty_c(\Omega\cup\Sigma)$ satisfy (\ref{Dirichlet}), then from Proposition~\ref{IBP},
\begin{align*}
\begin{aligned}
\left<P(\hat{u}), \hat{v}\right>	& = \int_\Omega \left<L^*u, L^*v\right> \rho \, d\mu_g+ C_0\int_\Sigma \hat{v}\hat{u}\rho\, d\sigma_g\\
	& = \int_\Sigma 2\hat{v}\dot{H}(\rho L^*u) \, d\sigma_g+ \int_\Sigma \hat{v}\left<\rho L^*u, h \right>_{\hat{g}} \, d\sigma_g+ C_0\int_\Sigma \hat{v}\hat{u}\rho\, d\sigma_g\\
	& = \left<2\dot{H}(\rho L^*u) + \left<\rho L^*u, h\right>_{\hat{g}} + C_0\hat{u}\rho, \hat{v}\right>.
\end{aligned}\end{align*}
\end{rmk}

\begin{prop}\label{unitary}
The operator $P$ is unitary.
\end{prop}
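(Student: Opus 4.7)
The key observation is that the calculation immediately preceding the proposition already shows
$$(P\hat{u})(\hat{v}) = \int_\Omega \langle L^*u, L^*v\rangle \rho\, d\mu_g + C_0 \int_\Sigma \hat{u}\hat{v}\rho\, d\sigma_g = \langle \hat{u}, \hat{v}\rangle_{\mathcal{D}}.$$
So $P$ is nothing but the Riesz representation map of the Hilbert space $\mathcal{D}$ into its dual $\mathcal{D}^*$. My plan is to exploit this identification to check the three things needed for $P$ to be unitary (i.e.\ an isometric isomorphism $\mathcal{D} \to \mathcal{D}^*$): boundedness, isometry, and surjectivity.

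First I would verify that $P\hat{u} \in \mathcal{D}^*$ for every $\hat{u} \in \mathcal{D}$. From the identity above and Cauchy--Schwarz in $\mathcal{D}$,
$$|(P\hat{u})(\hat{v})| = |\langle \hat{u}, \hat{v}\rangle_\mathcal{D}| \le \|\hat{u}\|_\mathcal{D}\,\|\hat{v}\|_\mathcal{D},$$
which both shows $P\hat{u}$ is a bounded linear functional on $\mathcal{D}$ and yields $\|P\hat{u}\|_{\mathcal{D}^*} \le \|\hat{u}\|_\mathcal{D}$. Conversely, testing against $\hat{v} = \hat{u}$ gives $(P\hat{u})(\hat{u}) = \|\hat{u}\|_\mathcal{D}^2$, so $\|P\hat{u}\|_{\mathcal{D}^*} \ge \|\hat{u}\|_\mathcal{D}$. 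Combining these two inequalities produces $\|P\hat{u}\|_{\mathcal{D}^*} = \|\hat{u}\|_\mathcal{D}$, which proves that $P$ is an isometry (and in particular injective).

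For surjectivity I would apply the Riesz representation theorem: since $\mathcal{D}$ is a Hilbert space (under the assumption $\ker\Phi^* = 0$, by the preceding proposition), every $F \in \mathcal{D}^*$ can be represented as $F(\hat{v}) = \langle \hat{u}_F, \hat{v}\rangle_\mathcal{D}$ for a unique $\hat{u}_F \in \mathcal{D}$. By the identity $\langle \hat{u}_F, \hat{v}\rangle_\mathcal{D} = (P\hat{u}_F)(\hat{v})$, we get $F = P\hat{u}_F$, so $P$ is onto. Combined with the isometry property, $P$ is a unitary isomorphism from $\mathcal{D}$ to $\mathcal{D}^*$.

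I do not foresee a serious obstacle here; the work was already done in setting up the inner product on $\mathcal{D}$ (which was designed precisely so that the boundary terms coming from the integration by parts formula of Proposition \ref{IBP} cancel into $C_0\int_\Sigma \hat{u}\hat{v}\rho\, d\sigma_g$) and in the previous proposition showing $\mathcal{D}$ is a Hilbert space. The only minor point worth being careful about is that the integration by parts used to derive $(P\hat{u})(\hat{v}) = \langle \hat{u}, \hat{v}\rangle_\mathcal{D}$ is valid in $\mathcal{D}$ (not just on smooth functions), which is handled by the density result Lemma \ref{density lemma} together with the bound $\|u\|_{H^2_\rho(\Omega)} \le C\|\hat{u}\|_\mathcal{D}$ from Corollary \ref{Dirichlet estimate}, allowing passage to the limit in the boundary and interior terms.
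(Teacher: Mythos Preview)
Your proof is correct. For the isometry statement itself, your argument is essentially the paper's: both obtain $\|P\hat{u}\|_{\mathcal{D}^*}\le\|\hat{u}\|_\mathcal{D}$ via Cauchy--Schwarz and the reverse inequality by testing against $\hat{v}=\hat{u}$. Your presentation is slightly cleaner because you explicitly identify $(P\hat{u})(\hat{v})=\langle\hat{u},\hat{v}\rangle_\mathcal{D}$ as the Riesz pairing and apply abstract Cauchy--Schwarz, whereas the paper writes out the inequality componentwise on the two summands of the $\mathcal{D}$-inner product.

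The one genuine difference is that you also prove surjectivity, via the Riesz representation theorem. The paper does \emph{not} include surjectivity in this proposition; it is established separately in the next theorem (the self-adjoint problem) by minimizing a coercive functional $\mathcal{F}_2$ over $\{u\in H^2_\rho(\Omega):u_\nu=0\text{ on }\Sigma\}$, and only afterwards is the conclusion ``$P$ is an isometric isomorphism'' recorded as a corollary. Your Riesz argument is shorter and, once $P$ is recognized as the Riesz map of the Hilbert space $\mathcal{D}$, renders that variational step unnecessary; the paper's hands-on minimization reaches the same endpoint by a more laborious route.
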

\begin{proof}
For $\hat{u}\in\mathcal{D}$, and for any $\epsilon>0$ there is $0\neq\hat{v}_0\in\mathcal{D}$ such that
\begin{align*}
\|P(\hat{u})\|_{\mathcal{D}^*}  & = \sup_{0\neq\hat{v}\in\mathcal{D}} \frac{|\left<P(\hat{u}), \hat{v}\right>|}{\|\hat{v}\|_{\mathcal{D}}}\le \frac{|\left<P(\hat{u}), \hat{v}_0\right>|}{\|\hat{v}_0\|_{\mathcal{D}}}+\epsilon\\
	& \le \left(\frac{\left(\int_\Omega \left<L^*u, L^*v_0\right> \rho\,d\mu_g + C_0\int_\Sigma \hat{v}_0\hat{u}\rho\,d\sigma_g\right)^2}{\|L^*v_0\|^2_{\mathcal{L}_\rho^{2}\left(\Omega\right)} + C_0\|\hat{v}_0\|_{L_\rho^{2}(\Sigma)}^{2}}\right)^{1/2}+\epsilon\\
	& \le \left(\frac{\left(\|L^*u\|_{\mathcal{L}_\rho^{2}\left(\Omega\right)} \|L^*v_0\|_{\mathcal{L}_\rho^{2}\left(\Omega\right)} + C_0\|\hat{u}\|_{L_\rho^{2}(\Sigma)}\|\hat{v}_0\|_{L_\rho^{2}(\Sigma)} \right)^2}{\|L^*v_0\|^2_{\mathcal{L}_\rho^{2}\left(\Omega\right)} + C_0\|\hat{v}_0\|_{L_\rho^{2}(\Sigma)}^{2}}\right)^{1/2}+\epsilon\\
	& \le \left(\frac{\left(\|L^*u\|^2_{\mathcal{L}_\rho^{2}\left(\Omega\right)} + C_0\|\hat{u}\|_{L_\rho^{2}(\Sigma)}^{2}\right)\left(\|L^*v_0\|^2_{\mathcal{L}_\rho^{2}\left(\Omega\right)} + C_0\|\hat{v}_0\|_{L_\rho^{2}(\Sigma)}^{2}\right)}{\|L^*v_0\|^2_{\mathcal{L}_\rho^{2}\left(\Omega\right)} + C_0\|\hat{v}_0\|_{L_\rho^{2}(\Sigma)}^{2}}\right)^{1/2}+\epsilon\\
	& = \|\hat{u}\|_{\mathcal{D}}+\epsilon.
\end{align*}
Let $\epsilon\to0$, we get $\|P(\hat{u})\|_{\mathcal{D}^*} \le \|\hat{u}\|_{\mathcal{D}}$.

On the other hand, for $\hat{u}\in\mathcal{D}$, $\hat{u}\neq0$,
$$
\|P(\hat{u})\|_{\mathcal{D}^*} \ge \frac{|\left<P(\hat{u}), \hat{u}\right>|}{\|\hat{u}\|_{\mathcal{D}}} = \frac{\left(\|L^*u\|^2_{\mathcal{L}_\rho^{2}\left(\Omega\right)} + C_0\|\hat{u}\|_{L_\rho^{2}(\Sigma)}^{2}\right)}{\left(\|L^*u\|^2_{\mathcal{L}_\rho^{2}\left(\Omega\right)} + C_0\|\hat{u}\|_{L_\rho^{2}(\Sigma)}^{2}\right)^{1/2}} = \|\hat{u}\|_{\mathcal{D}}.
$$
This means $\|P(\hat{u})\|_{\mathcal{D}^*} = \|\hat{u}\|_{\mathcal{D}}$.
\end{proof}


\begin{thm}[The self-adjoint problem]\label{self-adjoint soln weak}
Assuming $\operatorname{ker}\Phi^*=\{0\}$, for each $\psi\in\mathcal{D}^*$, there is a unique solution $\hat{u} \in \mathcal{D}$ such that
\begin{equation}\label{eqn:self-adjoint}
P(\hat{u}) = \psi,
\end{equation}
and we have
$$\|\hat{u}\|_{\mathcal{D}} = \|\psi\|_{\mathcal{D}^*}.$$
\end{thm}

\begin{proof}
Uniqueness and the estimate follow directly from Proposition \ref{unitary}.

Let us consider the functional $\mathcal{F}_2$ defined on $\mathcal{D}$ by
$$
\mathcal{F}_2(\hat{v})=\frac12 \|L^*v\|^2_{\mathcal{L}_\rho^{2}(\Omega)} + \frac{C_0}2\|\hat{v}\|_{L_\rho^{2}(\Sigma)}^{2} - \left<\psi, \hat{v}\right>.
$$
Here we make use of the one-to-one correspondence between $\hat{v} \in \mathcal{D}$ and $v\in H_\rho^2(\Omega)$ from the Dirichlet problem (\ref{Dirichlet}).

By Theorem \ref{Poincare} and Corollary \ref{Dirichlet estimate}, we have
$\|\hat{v}\|_\mathcal{D}\le C'\|v\|_{H_\rho^{2}(\Omega)} \le C\left\|\Phi^* v\right\|_{\mathcal{L}^2_\rho}$, then
$$
\begin{aligned}
\mathcal{F}_2(\hat{v}) & = \frac12\left\|\Phi^* v\right\|^2_{\mathcal{L}^2_\rho} - \frac12 \int_\Sigma \|h\|^2\hat{v}^2\rho\, d\sigma_g + \frac{C_0}2\|\hat{v}\|_{L_\rho^{2}(\Sigma)}^{2} - \left<\psi, \hat{v}\right>\\
	& \ge \frac{1}{2 C^2}\|\hat{v}\|_{\mathcal{D}}^{2} - \|\psi\|_{\mathcal{D}^*}\|\hat{v}\|_{\mathcal{D}}.
\end{aligned}
$$
Thus $\mathcal{F}_2(\hat{v})$ satisfies the coercivity condition and attains its infimum. Suppose the functional is minimized by $\mathcal{F}_2(\hat{v}_0) = \lambda$, and let $\hat{\eta} \in C_c^\infty(\Sigma)\subset\mathcal{D}$, then
\begin{align*}
0 & =\left.\frac{d}{d t}\right|_{t=0} \mathcal{F}_2(\hat{v}_0+t \hat{\eta})\\
		& =\left.\frac{d}{d t}\right|_{t=0} \left[\int_{\Omega}\frac12\|L^{*}(v_0+t \eta)\|^{2} \rho\,d \mu_g + \frac{C_0}2\|\hat{v}_0+t \hat{\eta}\|_{L_\rho^{2}(\Sigma)}^{2} -  \left<\psi, \hat{v}_0+t \hat{\eta}\right>\right]\\
		& = \int_{\Omega}\left<\rho L^*v_0, L^* \eta\right> d \mu_g + C_0\int_\Sigma \hat{v}_0\hat{\eta}\rho \,d\sigma_g- \left<\psi, \hat{\eta}\right>\\
		& = \left<P(\hat{v}_0) - \psi, \hat{\eta}\right>.
\end{align*}
This means $\hat{v}_0\in\mathcal{D}$ is a weak solution.
\end{proof} 

\begin{cor}
The operator $P$ is an isometric isomorphism.
\end{cor}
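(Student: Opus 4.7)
The plan is simply to assemble the two results established immediately above. Proposition \ref{unitary} shows that $P:\mathcal{D}\to\mathcal{D}^*$ is norm-preserving, $\|P(\hat u)\|_{\mathcal{D}^*}=\|\hat u\|_{\mathcal{D}}$ for every $\hat u\in\mathcal{D}$. This already gives injectivity, since $P(\hat u)=0$ forces $\|\hat u\|_{\mathcal{D}}=0$ and hence $\hat u=0$, and also shows that $P$ is a bounded linear operator whose inverse (on its image) is automatically bounded with norm $1$.

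The remaining ingredient is surjectivity, which is exactly the existence statement in Theorem \ref{self-adjoint soln}: for every $\psi\in\mathcal{D}^*$ there exists some $\hat u\in\mathcal{D}$ with $P(\hat u)=\psi$. Combining this with the injectivity from Proposition \ref{unitary}, $P$ is a linear bijection between the Hilbert spaces $\mathcal{D}$ and $\mathcal{D}^*$ that preserves norms, which is precisely the definition of an isometric isomorphism.

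All of the analytical work has already been carried out in the preceding two results: the isometry in Proposition \ref{unitary} comes from the Cauchy--Schwarz inequality applied to the inner product on $\mathcal{D}$, while the existence in Theorem \ref{self-adjoint soln} uses coercivity of the functional $\mathcal{F}_2$ via the Poincaré-type inequality (Theorem \ref{Poincare}) and the Dirichlet estimate (Corollary \ref{Dirichlet estimate}). Consequently there is no serious obstacle here; this corollary is a clean packaging statement, and the proof is essentially a one-line citation of Proposition \ref{unitary} together with Theorem \ref{self-adjoint soln}.
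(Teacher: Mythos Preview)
Your proposal is correct and matches the paper's approach exactly: the corollary is stated without proof in the paper, as it follows immediately from combining Proposition \ref{unitary} (norm preservation, hence injectivity) with Theorem \ref{self-adjoint soln} (surjectivity). There is nothing to add.
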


\subsection{The Fredholm operator}\label{sec4.4}
Define an operator $\hat{B}: \mathcal{D}\to\mathcal{D}^*$ by
\begin{equation}\label{hat B}
    \hat{B}(\hat{u}) \triangleq P(\hat{u}) - \left<\rho L^*u, h\right>_{\hat{g}} - C_0\hat{u}\rho.
\end{equation}
Then $\hat{B}$ differs from the self-adjoint operator $P$ by a lower order term $K = K_1 + K_2$. We will show that the lower order term is a compact operator, and hence $\hat{B}$ is Fredholm. This is not immediately obvious in weighted Sobolev spaces, particularly due to the use of (\ref{Hardy}) in the proof.

\begin{prop}\label{compact1}
The operator $K_1: \mathcal{D}\to\mathcal{D}^*$ defined by $K_1(\hat{u}) = C_0\hat{u}\rho$ is compact.
\end{prop}
\begin{proof}
For $\hat{v} \in \mathcal{D}$, 
$$
\left|\left<K_1(\hat{u}), \hat{v}\right>\right| = C_0\left|\int_\Sigma \hat{u}\hat{v}\rho\, d\sigma_g\right| \le C_0\|\hat{u}\|_{L_{\rho}^2(\Sigma)}\cdot \|\hat{v}\|_{L_{\rho}^2(\Sigma)} \le C^\frac12_0\|\hat{u}\|_{L_{\rho}^2(\Sigma)}\cdot \|\hat{v}\|_{\mathcal{D}}.
$$
Thus $K_1(\hat{u}) \in \mathcal{D}^*$, and $\|K_1(\hat{u})\|_{\mathcal{D}^*} \le C^\frac12_0\|\hat{u}\|_{L_{\rho}^2(\Sigma)}$.

Consider a bounded sequence $\hat{u}_j$ in $\mathcal{D}$ such that $\|\hat{u}_j\|_{\mathcal{D}} = 1$, then $u_j$'s are in $H_{\rho}^2(\Omega)$, and we have
$$
\|u_j\|_{H^{2}_\rho\left(\Omega\right)} \leq C\|\hat{u}_j\|_\mathcal{D} = C.
$$
From Corollary \ref{compact cor}, by relabeling indices we have that $\hat{u}_j\rho^{\frac12}\in L^2(\Sigma)$ is a Cauchy sequence. So
$$
\|K_1(\hat{u}_j) - K_1(\hat{u}_k)\|_{\mathcal{D}^*} \le C^\frac12_0\|\hat{u}_j - \hat{u}_k\|_{L_{\rho}^2(\Sigma)} = C^\frac12_0\|\hat{u}_j\rho^{\frac12} - \hat{u}_k\rho^{\frac12}\|_{L^2(\Sigma)} \longrightarrow 0.
$$
This means $K_1(\hat{u}_j)\in \mathcal{D}^*$ is also Cauchy, thus the operator $K_1$ is compact.

\end{proof}

\begin{prop}\label{compact2}
The operator $K_2: \mathcal{D}\to\mathcal{D}^*$ defined by $K_2(\hat{u}) = \left<\rho L^*u, h\right>_{\hat{g}}$ is compact.
\end{prop}
\begin{proof}
Consider a bounded sequence $\hat{u}_j$ in $\mathcal{D}$, so that $\|u_j\|_{H^{2}_\rho\left(\Omega\right)} \leq C$ for all $j$. Recall that $L^*u=-\left(\Delta_{g} u\right) g+\operatorname{Hess}_g(u) - u \operatorname{Ric}_g$. For any $\hat{v} \in \mathcal{D}$, we decompose the dual pairing $\langle K_2(\hat{u}_j), \hat{v} \rangle$ into three terms:
\begin{equation*}
    \begin{aligned}
    \text{I} & \triangleq \int_\Sigma \left<\rho \left(\Delta_{g} u_j\right) g, h\right>_{\hat{g}}\hat{v}\,d\sigma_g,\\
        \text{II} & \triangleq \int_\Sigma \left<\rho \operatorname{Hess}_g(u_j), h\right>_{\hat{g}}\hat{v}\,d\sigma_g,\\
        \text{III}  & \triangleq \int_\Sigma \left<\rho u_j \operatorname{Ric}_g, h\right>_{\hat{g}}\hat{v}\,d\sigma_g.
    \end{aligned}
\end{equation*}
Here, we slightly abuse notation by using integrals to represent dual pairings. Throughout the proof, we may occasionally omit the measures in integrals when they are clear from context.

\textbf{Term III:}
For
$$
\text{III} = \int_\Sigma \left<\operatorname{Ric}_g, h\right>_{\hat{g}}\rho \hat{u}_j\hat{v},
$$
since $\left<\operatorname{Ric}_g, h\right>_{\hat{g}}$ is bounded, we conclude that up to a subsequence III is Cauchy, which is similar to Proposition \ref{compact1}.

\textbf{Term II:}
On $\Sigma$, we have the decomposition:
$$
\operatorname{Hess}_g(u_j) = \operatorname{Hess}_\Sigma(u_j) + (u_j)_\nu h,
$$
so that
$$
\text{II} = \int_\Sigma \left< \operatorname{Hess}_{\Sigma}(\hat{u}_j), h\right>_{\hat{g}}\rho\hat{v}.
$$
In local Fermi coordinates, where $k,l = 1,\dots, n-1$, this becomes:
\begin{align*}
\text{II} & = \int_\Sigma \nabla_k\nabla_l \hat{u}_j h^{kl}\rho\hat{v} = -\int_\Sigma \nabla_l \hat{u}_j \nabla_k\left(h^{kl}\rho\hat{v}\right)\\
	& = -\int_\Sigma \nabla_l \hat{u}_j \nabla_k h^{kl}\rho\hat{v} -\int_\Sigma \nabla_l \hat{u}_j h^{kl}\rho\nabla_k\hat{v} -\int_\Sigma \nabla_l \hat{u}_j h^{kl}(\nabla_k\rho)\hat{v}.
\end{align*}
Note that on $\Sigma$, we have $|D u_j|^2 = |\nabla u_j|^2 + |(u_j)_\nu|^2 = |\nabla \hat{u}_j|^2$, and since $\|D u_j\|^2_{\mathcal{H}^1_\rho(\Omega)} \le \|u_j\|^2_{H^2_\rho(\Omega)} \le C$, it follows from Corollary~\ref{compact cor} that $\left(\nabla\hat{u}_j\right)\rho^{\frac12}\in\mathcal{L}^2(\Sigma)$ is Cauchy (up to a subsequence). Therefore, the first two integrals in II are Cauchy.

For the third integral, notice that both $\hat{v}$ and $\nabla\hat{v}$ are in $L^2_\rho(\Sigma)$, so $\hat{v}\in H_{\rho}^{1}(\Sigma)$, and we have by Proposition~\ref{ineq weighted Sobolev norms} and (\ref{Hardy}) that
$$\int_{\Sigma} \hat{v}^{2} \theta^{-2} \rho \leq C\left\|\hat{v} \rho^{\frac12}\right\|^2_{H^1(\Sigma)} \leq C\|\hat{v}\|_{H_{\rho}^{1}(\Sigma)}^{2}.$$ 
Hence,
\begin{align*}
& \left|\int_\Sigma \nabla_l \hat{u}_j h^{kl}\nabla_k\rho\hat{v} - \int_\Sigma \nabla_l \hat{u}_m h^{kl}\nabla_k\rho\hat{v}\right|\\
\le & C\int_\Sigma \left|\nabla \hat{u}_j - \nabla \hat{u}_m\right| \rho \theta^{-1}|\hat{v}|\\
\le & C\left(\int_\Sigma \left|\nabla \hat{u}_j - \nabla \hat{u}_m\right|^2\rho\right)^{\frac12}\left(\int_\Sigma \theta^{-2}\hat{v}^2 \rho\right)^{\frac12}\\
\le & C\|\hat{v}\|_{H_{\rho}^{1}(\Sigma)}\left(\int_\Sigma \left|\left(\nabla\hat{u}_j\right)\rho^{\frac12} - \left(\nabla\hat{u}_m\right)\rho^{\frac12}\right|^2\right)^{\frac12} \longrightarrow 0.
\end{align*}
Thus, the third integral is also Cauchy. Therefore, II is Cauchy up to a subsequence.

\textbf{Term I:}
For
$$
\text{I} = \int_\Sigma \Delta_{g} u_j H\rho\hat{v},
$$
let us choose a test function $w \in H^3_\rho(\Omega)$ such that $w=0$ on $\p\Omega$, $w_\nu = H\hat{v}$ on $\Sigma$, and $w_\nu = 0$ on $\Sigma'$ (such $w$ exists by the trace theorem). By Proposition~\ref{IBP} and Proposition~\ref{lem: density cpt supp},
\begin{align*}
0 & = \int_\Omega L(\rho L^*u_j) w\\
	& = \int_\Omega \left<L^*u_j, L^*w\right> \rho + \int_\Sigma w_\nu\operatorname{tr}_\Sigma (\rho L^*u_j)\\
	& = \int_\Omega \left<L^*u_j, \rho L^*w\right> + \int_\Sigma H\hat{v}\left(-(n-1)\rho\Delta_g u_j + \rho\Delta_\Sigma u_j - \hat{u}_j\rho\operatorname{tr}_\Sigma \operatorname{Ric}_g\right).
\end{align*}
Rearranging gives:
$$
(n-1)\text{I} = \int_\Omega \left<L^*u_j, \rho L^*w\right> + \int_\Sigma H\rho\hat{v}\Delta_\Sigma u_j - \int_\Sigma H\left(\operatorname{tr}_\Sigma \operatorname{Ric}_g\right)\hat{v}\hat{u}_j\rho.
$$
Note that
$$
H\operatorname{tr}_\Sigma \operatorname{Ric}_g = H(R_g-\operatorname{Ric}_g(\nu,\nu)) = \frac12H(R_g+R_\Sigma+\|h\|^2-H^2)
$$
is bounded. Hence, as in Proposition~\ref{compact1}, the third integral has a Cauchy subsequence.

For the second integral:
$$
\int_\Sigma H\rho\hat{v}\Delta_\Sigma u_j = -\int_\Sigma \nabla_\Sigma (H\rho\hat{v})\cdot\nabla_\Sigma u_j.
$$
So it also has a Cauchy subsequence, which is similar to Term II. 

For the first integral, since $L^*w \in \mathcal{H}^1_\rho(\Omega)$, integration by parts allows us to transfer a derivative from $L^*u$ to $\rho L^*w$ and get a Cauchy subsequence. As a result, I is Cauchy up to a subsequence.

\textbf{Conclusion:} We have shown that, up to a subsequence, $\left<K_2(\hat{u}_j), \hat{v}\right>$ is Cauchy for every $\hat{v} \in \mathcal{D}$. Now, for any $\epsilon>0$ there is $\hat{v}_0\in\mathcal{D}$ with $\|\hat{v}_0\|_{\mathcal{D}}=1$ such that
$$\|K_2(\hat{u}_j) - K_2(\hat{u}_m)\|_{\mathcal{D}^*} \le \left|\left<K_2(\hat{u}_j), \hat{v}_0\right> - \left<K_2(\hat{u}_m), \hat{v}_0\right>\right| + \epsilon.$$
Since $\left<K_2(\hat{u}_j), \hat{v}_0\right>$ is Cauchy, there is $N$ large such that for all $j, m>N$, 
$$
\left|\left<K_2(\hat{u}_j), \hat{v}_0\right> - \left<K_2(\hat{u}_m), \hat{v}_0\right>\right| < \epsilon.
$$
This implies
$$\|K_2(\hat{u}_j) - K_2(\hat{u}_m)\|_{\mathcal{D}^*} < 2\epsilon \quad\text{ for all }j, m>N.$$
Therefore, up to a subsequence, $K_2(\hat{u}_j)$ is Cauchy, and hence the operator $K_2$ is compact.
\end{proof}

\begin{thm}\label{Fredholm}
The operator $\hat{B}$ is a bounded Fredholm operator and $\operatorname{ind}(\hat{B}) = \operatorname{ind}(P) = 0$. Moreover, $\hat{B}\left(\mathcal{D}\right)$ is closed and has finite codimension in $\mathcal{D}^*$.
\end{thm}
\begin{proof}
From the above propositions, we have $\hat{B} = P + K$, where $K$ is a compact operator. The result then follows directly from standard properties of Fredholm operators \cites{bS, G}.
\end{proof}


\subsection{Solving the linearized equation}\label{sec: weak soln}
We now focus on the densely defined operator $\Psi$ and study its surjectivity.

The operator is given by
\begin{align*}
\Psi: \mathcal{L}_{\rho^{-1}}^2(\Omega) & \rightarrow \left(H_{\nu, \rho}^{2}(\Omega)\right)^*\times\mathcal{D}^*;\\
a & \mapsto (L(a), B(a)).
\end{align*}
Specifically, we consider the realization of $\Psi$ with the domain
\begin{align*}
    \operatorname{dom}(\Psi) \triangleq \Big\{a& \in \mathcal{L}_{\rho^{-1}}^2(\Omega):\: L(a)\in \left(H_{\nu, \rho}^{2}(\Omega)\right)^*, B(a)\in \mathcal{D}^*, \langle  a, h\rangle_{\hat g}\in \mathcal{D}^* \text{ such that}\\ &\forall  u\in H_{\nu, \rho}^{2}(\Omega),\: \langle L(a), u\rangle + \langle B(a), \hat u\rangle = \langle a, L^* u\rangle - \big\langle\langle a, h\rangle_{\hat g} ,\hat u \rangle \Big\}.
\end{align*}
Here, the brackets denote the appropriate dualities. The domain is endowed with the graph norm
\begin{align}\label{eq: norm}
    \|a\|_{\mathrm{dom(\Psi)}}^2 = \|a\|^2_{\mathcal{L}^2_{\rho^{-1}}(\Omega)}+ \|L(a)\|_{\left(H^2_{\nu, \rho}(\Omega)\right)^*}^2 + \|B(a)\|_{\mathcal{D}^*}^2 + \|\langle a, h \rangle_{\hat g}\|_{\mathcal{D}^*}^2.
\end{align}

We begin with a density lemma, which implies that the image $\Psi\left(\mathcal{C}^\infty_c(\Omega\cup \Sigma)\right)$ is dense in $\left(H_{\nu, \rho}^{2}(\Omega)\right)^*\times\mathcal{D}^*$ under generic conditions.
\begin{lem}\label{lem:density}
$\mathcal{C}_c^\infty(\Omega\cup \Sigma)$ is dense in $\operatorname{dom}(\Psi)$ with respect to the graph norm.
\end{lem}
{\begin{proof}
By the Hahn-Banach theorem, it suffices to show that any bounded linear functional $\ell$ on $\operatorname{dom}(\Psi)$ that vanishes on $\mathcal{C}_c^\infty(\Omega\cup \Sigma)$ must be identically zero.

Let $\ell$ be such a functional. There must exist $b\in \mathcal{L}_\rho^2(\Omega)$, $u \in H_{\nu, \rho}^{2}(\Omega)$, and $\hat{v}_1, \hat{v}_2 \in \mathcal{D}$ such that for any $a\in \operatorname{dom}(\Psi)$,
    \begin{align}\label{eq: functional}
        \ell(a) = &\langle a, b\rangle_{\mathcal{L}^2_{1/\rho}(\Omega), \mathcal{L}^2_\rho(\Omega)} + \langle L(a), u\rangle_{(H^2_{\nu, \rho}(\Omega))^*, H^2_{\nu, \rho}(\Omega)} \\
        + & \langle B(a), \hat{v}_1\rangle_{\mathcal{D}^*, \mathcal{D}} + \langle \langle a, h\rangle_{\hat{g}}, \hat{v}_2\rangle_{\mathcal{D}^*, \mathcal{D}},\notag
    \end{align}
where $\langle\cdot, \cdot\rangle$ denotes the duality pairing between the corresponding spaces. By the definition of $\operatorname{dom}(\Psi)$, 
\begin{align*}
    \langle L(a), u\rangle_{(H^2_{\nu, \rho}(\Omega))^*, H^2_{\nu, \rho}(\Omega)} = \langle a, L^* u\rangle_{\mathcal{L}^2_{1/\rho}(\Omega), \mathcal{L}^2_\rho(\Omega)} - \langle B(a), \hat u\rangle_{\mathcal{D}^*, \mathcal{D}}   
    - \big\langle\langle  a, h\rangle_{\hat g} ,\hat u \rangle_{\mathcal{D}^*, \mathcal{D}}.
\end{align*}
So we can rewrite (\ref{eq: functional}) as
    \begin{align}\label{eq: functional2}
        \ell(a) & = \langle a, b + L^* u\rangle_{\mathcal{L}^2_{1/\rho}(\Omega), \mathcal{L}^2_\rho(\Omega)} + \langle B(a), \hat{v}_1-\hat u\rangle_{\mathcal{D}^*, \mathcal{D}} + \langle \langle a, h\rangle_{\hat g}, \hat{v}_2-\hat u\rangle_{\mathcal{D}^*, \mathcal{D}}.
    \end{align}
        
First, consider an arbitrary test tensor $a\in \mathcal{C}_c^\infty(\Omega)$, then 
$$
0 = \ell(a) = \langle a, b + L^* u\rangle_{\mathcal{L}^2_{1/\rho}(\Omega), \mathcal{L}^2_\rho(\Omega)}.
$$
This means $b + L^* u = 0$, and (\ref{eq: functional2}) simplifies to 
$$\ell(a) = \langle B(a), \hat{v}_1-\hat u\rangle_{\mathcal{D}^*, \mathcal{D}} + \langle \langle a, h\rangle_{\hat g}, \hat{v}_2-\hat u\rangle_{\mathcal{D}^*, \mathcal{D}}.$$

As established in Section~\ref{sec3.1}, for any given $\varphi \in C_c^\infty(\Sigma)$, we can construct a tensor $a_\varphi \in \mathcal{C}_c^\infty(\Omega\cup \Sigma)$ such that $B(a_\varphi) = \varphi$ and $a_\varphi = 0$ on $\Sigma$. Since $C_c^\infty(\Sigma)$ is dense in $\mathcal{D}$, let $\varphi_j\in C_c^\infty(\Sigma)$ be a sequence converging to $\hat{v}_1-\hat u$ in $\mathcal{D}$. Let $a_j \in \mathcal{C}_c^\infty(\Omega\cup \Sigma)$ denote the tensor corresponding to each $\varphi_j$. Then we have
$$
0 = \lim_{j\to \infty}\ell(a_j) = \lim_{j\to \infty}\langle B(a_j), \hat{v}_1-\hat u\rangle_{\mathcal{D}^*, \mathcal{D}} = \lim_{j\to \infty}\int_\Sigma\varphi_j (\hat{v}_1-\hat u)d\sigma_g = \int_\Sigma(\hat{v}_1-\hat u)^2 d\sigma_g.
$$
This implies that $\hat{v}_1-\hat u = 0$, and (\ref{eq: functional2}) further simplifies to 
$$\ell(a) = \langle \langle a, h\rangle_{\hat g}, \hat{v}_2-\hat u\rangle_{\mathcal{D}^*, \mathcal{D}}.$$

Finally, for any $\varphi \in C_c^\infty(\Sigma)$, we can construct $a_\varphi\in \mathcal{C}_c^\infty(\Omega \cup \Sigma)$ such that $a_\varphi = \varphi h$ on $\Sigma$. Let $\varphi_j \in C_c^\infty(\Sigma)$ be a sequence converging to $\hat v_2 - \hat u$ in $\mathcal D$ and $a_j\in \mathcal{C}_c^\infty(\Omega \cup \Sigma)$ be the corresponding tensors. Then
$$
0 = \lim_{j\to \infty}\ell(a_j) = \lim_{j\to \infty}\langle \langle a_j, h\rangle_{\hat g}, \hat{v}_2 - \hat u\rangle_{\mathcal{D}^*, \mathcal{D}}  = \int_\Sigma (\hat{v}_2-\hat u)^2\| h\|^2\, d\sigma_g.
$$
Therefore, $\hat{v}_2 - \hat{u} = 0$ on the support of $h$, and from (\ref{eq: functional2}) we conclude that $\ell(a) = 0$ for all $a \in \operatorname{dom}(\Psi)$. 
\end{proof}





\subsubsection{Existence of solutions}
Up to now, we have focused on a special type of tensor solutions of the form $a = \rho L^*u$, which are elements of the space
$$S' = \left\{\rho L^*u: u \mbox{ solves \eqref{Dirichlet} for \:}\hat{u}\in \mathcal{D}\right\}.$$
Unfortunately, unlike in the case of variational problems, the operator $\hat{B}: \mathcal{D}\to\mathcal{D}^*$ is not necessarily surjective. We now turn our attention to tensor solutions that cannot, in general, be expressed in this form. Accordingly, we consider the space
$$
S = \left\{a\in\mathcal{L}_{\rho^{-1}}^2(\Omega): L(a) = 0\right\}.
$$
Observe that $S$ is closed in $\mathcal{L}_{\rho^{-1}}^2(\Omega)$, thus is also a Hilbert space.

It is easy to see that $S'\subset S$ since for $\hat{u}\in \mathcal{D}$, we have $\rho L^*u\in\mathcal{L}_{\rho^{-1}}^2(\Omega)$ and $L(\rho L^*u) = 0$ in $\Omega$. Let $\overline{S'}$ denote the closure of $S'$ in $\mathcal{L}_{\rho^{-1}}^2(\Omega)$, i.e., any $a\in\overline{S'}$ can be written as 
\begin{equation}\label{S' limit}
a = \lim_{j \to \infty} \left(\rho L^*u_j\right),
\end{equation}
where $\{\rho L^*u_j: \hat{u}_j\in \mathcal{D}\}$ is a Cauchy sequence in $\mathcal{L}_{\rho^{-1}}^2(\Omega)$. Then $\overline{S'}\subset S$, and we may split the space $S$ as
$$
S = \overline{S'}\oplus (S')^\perp.
$$

\bigskip
If $S$ were equal to $\overline{S'}$, then any tensor in $S$ could be represented as in (\ref{S' limit}). However, this is not the case; in fact, $(S')^\perp$ is infinite-dimensional. 

\begin{prop}\label{S'}
The space $S'$ has infinite codimension in $S$.
\end{prop}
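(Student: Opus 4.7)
The plan is to exhibit an infinite-dimensional subspace of $S$ which is orthogonal, in the $\mathcal{L}^2_{\rho^{-1}}(\Omega)$ inner product, to every element of $S'$. Since $\overline{S'}$ is a closed subspace of the Hilbert space $S$, this forces $\dim(S / \overline{S'}) = \infty$, and hence $S'$ has infinite codimension in $S$. I will build the orthogonal family out of smooth compactly supported tensors in the interior of $\Omega$ that lie in $\ker L$.

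Concretely, I first choose a countable family of pairwise disjoint open balls $B_1, B_2, \ldots$ contained in $\Omega^\circ \coloneqq \Omega \setminus (\partial\Omega \cup \Sigma)$. On each $B_j$, I will construct a nonzero tensor $a_j \in C_c^\infty(B_j; \mathcal{S}^{(0,2)})$ with $L(a_j) = 0$. The natural ansatz is a Lie derivative $a_j = \mathcal{L}_{X_j} g$ for $X_j \in C_c^\infty(B_j; TM)$: diffeomorphism equivariance of the scalar curvature gives
$$L(\mathcal{L}_{X_j} g) = X_j(R_g),$$
so the equation $L(a_j) = 0$ reduces to requiring $X_j$ to be tangent to the level sets of $R_g$. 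If $dR_g \equiv 0$ on $B_j$, any compactly supported $X_j$ works; otherwise, after possibly shrinking $B_j$ so that $dR_g$ does not vanish on $B_j$, the space of smooth compactly supported sections of the rank-$(n-1)$ distribution $\ker dR_g \subset TB_j$ is infinite-dimensional. Since Killing fields of $(B_j, g)$ form a finite-dimensional Lie algebra, a generic such $X_j$ yields $\mathcal{L}_{X_j} g \not\equiv 0$.

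It remains to check that each $a_j$ belongs to $(S')^\perp \cap S$. Compact support inside $\Omega^\circ$ implies $\rho \geq c_j > 0$ on $\operatorname{supp} a_j$, hence $a_j \in \mathcal{L}^2_{\rho^{-1}}(\Omega)$; combined with $L(a_j) = 0$ we get $a_j \in S$. For orthogonality, given any $\hat{u} \in \mathcal{D}$ with Dirichlet extension $u$, the integration by parts formula of Proposition \ref{IBP} gives
$$\langle a_j, \rho L^* u\rangle_{\mathcal{L}^2_{\rho^{-1}}(\Omega)} = \int_\Omega \langle a_j, L^* u\rangle \, d\mu_g = \int_\Omega L(a_j)\, u \, d\mu_g + \int_\Sigma \bigl(-2u\dot{H}(a_j) - u\langle a_j, h\rangle_{\hat g} + u_\nu \operatorname{tr}_\Sigma a_j\bigr)\, d\sigma_g.$$
The interior integral vanishes since $L(a_j) = 0$, and every term in the boundary integral vanishes since $a_j$ and all its derivatives vanish in a neighborhood of $\Sigma$. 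The tensors $\{a_j\}$ are manifestly linearly independent because their supports are pairwise disjoint, so $(S')^\perp \cap S$ is infinite-dimensional, completing the proof.

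The main obstacle is the construction in the middle paragraph: one must simultaneously achieve $L(a_j) = 0$, $a_j \not\equiv 0$, and compact support in $\Omega^\circ$. The Lie-derivative ansatz handles this cleanly because $L \circ \mathcal{L}_\bullet g$ reduces to the first-order scalar equation $X(R_g) = 0$, whose solution space modulo Killing fields is infinite-dimensional on any open set.
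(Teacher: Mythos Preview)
Your proof is correct and follows the same overall strategy as the paper: exhibit an infinite-dimensional family of smooth, compactly supported symmetric $(0,2)$-tensors in $\ker L$, and observe that any such tensor lies in $S\cap (S')^{\perp}$ because the integration-by-parts identity of Proposition~\ref{IBP} produces only an interior term and a $\Sigma$-boundary term, both of which vanish. The one substantive difference is how the compactly supported kernel elements are produced: the paper simply invokes Bourguignon--Ebin--Marsden \cite{B-E-M} (the principal symbol of $L$ is surjective but not injective, so $\ker L$ contains an infinite-dimensional space of compactly supported tensors), whereas you give a self-contained construction via $a_j=\mathcal{L}_{X_j}g$ with $X_j\in C_c^\infty(B_j;TM)$ tangent to the level sets of $R_g$, using $L(\mathcal{L}_X g)=X(R_g)$. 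Your route is more elementary and avoids the external reference; the paper's is shorter but black-boxes the key existence step. One cosmetic slip: when you rearrange Proposition~\ref{IBP} to isolate $\int_\Omega\langle a_j,L^*u\rangle\,d\mu_g$, the boundary integral should carry the opposite sign, but since both the interior and boundary contributions vanish this has no effect on the argument.
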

\begin{proof}
Consider $a\in S$. Then $a\in (S')^\perp$ if and only if for any $\hat{u}\in\mathcal{D}$,
\begin{equation}\label{infinite S' perp}
0 = \left<a, \rho L^*u\right>_{\mathcal{L}_{\rho^{-1}}^2(\Omega)} = \int_\Omega \left<a, L^*u\right> d \mu_g.
\end{equation}
We notice that there is an infinite-dimensional space of tensors $a'\in\mathcal{C}^\infty_c(\Omega)$ with $L(a')=0$ (see \cite{B-E-M}). Then any such tensor $a'$ satisfies $a'\in\mathcal{L}_{\rho^{-1}}^2(\Omega)$ and hence $a'\in S$. Moreover, we have 
$$0=\int_\Omega L(a')u \, d \mu_g = \int_\Omega \left<a', L^*u\right> d \mu_g,$$
so (\ref{infinite S' perp}) is satisfied. Therefore, any such tensor $a'$ lies in $(S')^\perp$. As a result, the space $(S')^\perp$ is also infinite-dimensional. 
\end{proof}

The operator $B$ is initially defined as $B=2\dot{H}$ on $\mathcal{C}^\infty_c(\Omega\cup\Sigma)$, which is dense in $\mathcal{L}_{\rho^{-1}}^2(\Omega)$. Our goal is now to extend the domain of $B$ to the set $S\cap\operatorname{dom}(\Psi)$. To do this, we first use the fact that for any $a\in S'$, there is $\hat{u}\in \mathcal{D}$ such that $a = \rho L^*u$. We can then define $B(a) \triangleq \hat{B}(\hat{u})$. This definition is well-defined, as the result is independent of the choice of $\hat{u}\in \mathcal{D}$. This extension is consistent with the original operator, since for any $a\in S'\cap\mathcal{C}^\infty_c(\Omega\cup\Sigma)$, Remark~\ref{P Cc form} confirms that
$$B(a) = \hat{B}(\hat{u}) = 2\dot{H}(\rho L^*u)=2\dot{H}(a).$$

We summarize the key properties of the densely defined operator $B: S\to \mathcal{D}^*$:
\begin{itemize}[leftmargin=*]
\item $B$ is a linear operator;
\item The domain of $B$ is $\tilde{S}\triangleq S\cap\operatorname{dom}(\Psi)$;
\item $B(a) = 2\dot{H}(a)$ for all $a\in S\cap\mathcal{C}^\infty_c(\Omega\cup\Sigma)$, and extends by continuity to $S\cap\mathcal{H}_{\rho^{-1}}^2(\Omega)$;
\item $B(a) = \hat{B}(\hat{u})$ for all $a\in S'$, where $a = \rho L^*u$. Moreover, $B$ is bounded on $S'$:
\begin{equation}\label{B bounded}
\|B(a)\|_{\mathcal{D}^*} = \|\hat{B}(\hat{u})\|_{\mathcal{D}^*} \le C\|\hat{u}\|_{\mathcal{D}} \le C'\|u\|_{H_{\rho}^2(\Omega)} \le C''\|\rho L^*u\|_{\mathcal{L}_{\rho^{-1}}^2(\Omega)}.
\end{equation}
\end{itemize}

\begin{rmk}\label{rmk: B extension}
In fact, a similar bound for $B(\rho L^*u)$ holds even when $L(\rho L^*u) \ne 0$. More precisely, there is a constant $C$ uniform for metrics $\mathcal{C}^{k+4}$-near $g$ such that  
\begin{equation}\label{B bounded2}
\|B(\rho L^*u)\|_{\mathcal{D}^*} \le C\left(\|u\|_{H_{\rho}^2(\Omega)} + \|L(\rho L^*u)\|_{\left(H_{\nu, \rho}^{2}(\Omega)\right)^*}\right).
\end{equation}
\end{rmk}

\bigskip
By definition, we have
$$
\hat{B}(\mathcal{D}) = B(S') \subset B(\tilde{S}) \subset \mathcal{D}^*.
$$
Since $\hat{B}(\mathcal{D})$ has finite codimension in $\mathcal{D}^*$, it follows that $B(\tilde{S})$ also has finite codimension in $\mathcal{D}^*$, and is therefore closed.

\begin{thm}[Surjectivity of $B$]\label{Surjectivity of B}
Assuming $\operatorname{ker}\Phi^*=\{0\}$, we have $B(\tilde{S})=\mathcal{D}^*$.
\end{thm}
\begin{proof}
Assume $\operatorname{ker}\Phi^*=\{0\}$, then $\operatorname{Im}\Psi$ is dense in $\left(H_{\nu, \rho}^{2}(\Omega)\right)^*\times\mathcal{D}^*$. This means, for any $\psi\in\mathcal{D}^*$, there is a sequence $a_j\in \operatorname{dom}(\Psi)$ such that
$$
\Psi(a_j) = \left(L(a_j), B(a_j)\right)\longrightarrow (0, \psi).
$$
By Theorem~\ref{Dirichlet1}, for each $j$, there is a unique solution $u_j \in H_{0, \rho}^{2}(\Omega)$ to 
$$
L(\rho L^*u_j) = L(a_j) \quad \text{in } \Omega,
$$
and 
$$
 \|u_j\|_{H_{\rho}^{2}(\Omega)} \le C\|L(a_j)\|_{\left(H_{\nu, \rho}^{2}(\Omega)\right)^*}.
$$
Here the constant $C$ is independent of $j$. Then 
$$a_j' = a_j-\rho L^*u_j\in \operatorname{dom}(\Psi)$$
is another sequence satisfying
\begin{align*}
\left\{\begin{aligned}
L(a_j') & = 0 \quad &\text{in } &\Omega\\ 
B(a_j') & = B(a_j-\rho L^*u_j) &\text{on } &\Sigma.
\end{aligned}\right.
\end{align*}
In other words, $a_j'$ is a sequence in $\tilde{S}$. In addition, by \eqref{B bounded2},
\begin{align*}
 \|B(a_j') - \psi\|_{\mathcal{D}^*} & \le \|B(a_j) - \psi\|_{\mathcal{D}^*} +  \|B(\rho L^*u_j)\|_{\mathcal{D}^*}\\
 	& \le \|B(a_j) - \psi\|_{\mathcal{D}^*} + C \left(\|u_j\|_{H_{\rho}^2(\Omega)} + \|L(a_j)\|_{\left(H_{\nu, \rho}^{2}(\Omega)\right)^*}\right)\\
	& \le \|B(a_j) - \psi\|_{\mathcal{D}^*} + C'\|L(a_j)\|_{\left(H_{\nu, \rho}^{2}(\Omega)\right)^*}\longrightarrow 0,
\end{align*}
where the constant $C'$ is also independent of $j$. As a result, $B(\tilde{S})$ is dense in $\mathcal{D}^*$.

Since $B(\tilde{S})$ is also closed in $\mathcal{D}^*$, we have $B(\tilde{S})=\mathcal{D}^*$.
\end{proof}

\subsubsection{Structure of solutions}
Theorem~\ref{Surjectivity of B} tells us that for any $\psi\in\mathcal{D}^*$, there is some $a\in \mathcal{L}_{\rho^{-1}}^2(\Omega)$ such that
\begin{align}\label{solution1}
\left\{\begin{aligned}
L(a) & = 0 \quad &\text{in } &\Omega\\ 
B(a) & = \psi &\text{on } &\Sigma.
\end{aligned}\right.
\end{align}
Note that we are not claiming uniqueness of solutions. Having established the existence of weak solutions to (\ref{solution1}), we now turn to a more detailed analysis of their structure. 

First, for the space $\hat{B}(\mathcal{D})\subset\mathcal{D}^*$, we have the following as a direct consequence of Theorem~\ref{Fredholm}.
\begin{cor}\label{bounded inverse B}
The operator $\hat{B}^{-1}: \hat{B}(\mathcal{D})\to \mathcal{D}/\operatorname{ker}\hat{B}$ is linear and bounded.
\end{cor}
\begin{proof}
As a closed subspace of $\mathcal{D}^*$, $\hat{B}(\mathcal{D})$ is also a Hilbert space. Then it follows directly from the Bounded Inverse Theorem.
\end{proof}

For simplicity, we will always choose the representative element $\hat{u}$ to lie in the orthogonal complement of $\operatorname{ker}\hat{B}$. This ensures that the inverse operator $\hat{B}^{-1}$ is well-defined as an element of $\mathcal{L}(\hat{B}(\mathcal{D}), (\operatorname{ker}\hat{B})^\perp)$, and the mapping is given by:
$$
\psi = \hat{B}(\hat{u}) \xmapsto{\hat{B}^{-1}} \hat{u}.
$$
For a general $\hat{u}\in\mathcal{D}$, the inverse can be defined to be the projection of $\hat{u}$ onto $(\operatorname{ker}\hat{B})^\perp$. Denoting this projection by $\hat{u}_0 = \Pi_{(\operatorname{ker}\hat{B})^\perp} \hat{u}$, the mapping becomes:
$$
\psi = \hat{B}(\hat{u}) \xmapsto{\hat{B}^{-1}} \hat{u}_0.
$$
Furthermore, since it is an orthogonal projection, we have $\|\hat{u}_0\|_{\mathcal{D}} \le \|\hat{u}\|_{\mathcal{D}}$.

\bigskip
We now construct a basis for a subspace in $\mathcal{D}^*$ complementary to $\hat{B}(\mathcal{D})$. By Theorem~\ref{Fredholm}, there exists a constant $p \in \mathbb{N}$, uniform for all metrics $\mathcal{C}^{k+4}$-near $g$, such that $\operatorname{codim}\hat{B}(\mathcal{D}) = \operatorname{codim}B(S') = p$. (We assume $p>0$; otherwise, if $p=0$, then $\hat{B}$ is surjective and we are done.) Consequently, any such complementary subspace must be $p$-dimensional. The following theorem formalizes this construction.

\begin{thm}\label{basis}
Assuming $\operatorname{ker}\Phi^*=\{0\}$, there exist tensors $a_1, \dots, a_p\in (S\setminus S')\cap\mathcal{B}_{2}(\Omega)$ whose images under the operator $B$ form a basis for a complementary subspace to $\hat{B}(\mathcal{D})$.

Specifically, $B(a_1), \dots, B(a_p)\in \mathcal{B}_1(\Sigma)$ are linearly independent and provide the direct sum decomposition:
$$\hat{B}(\mathcal{D})\oplus\operatorname{span}\{B(a_1), \dots, B(a_p) \} = \mathcal{D}^*.$$
\end{thm}

\begin{proof}
Lemma~\ref{lem:density} allows us to choose a tensor $a\in \mathcal{C}_c^\infty(\Omega\cup\Sigma)$ with the following properties:
\begin{itemize}
\item $\|L(a)\|_{\left(H_{\nu, \rho}^{2}(\Omega)\right)^*}$ is arbitrarily small;
\item $B(a)\notin\hat{B}(\mathcal{D})$, and is normalized such that $\|\Pi_{(\hat{B}(\mathcal{D}))^\perp} B(a)\|_{\mathcal{D}^*} = 1$, where $\Pi_{(\hat{B}(\mathcal{D}))^\perp}$ denotes the orthogonal projection onto $(\hat{B}(\mathcal{D}))^\perp$.
\end{itemize}
The existence of such an $a$ is guaranteed, as its non-existence would imply that $\hat{B}(\mathcal{D}) = \mathcal{D}^*$, contradicting $\operatorname{codim}\hat{B}(\mathcal{D}) = p$.

According to Theorem~\ref{Dirichlet1}, the equation $L(\rho L^*u) = L(a)$ has a unique solution $u \in H_{0, \rho}^{2}(\Omega)$ that satisfies the estimate
$$
 \|u\|_{H_{\rho}^{2}(\Omega)} \le C\|L(a)\|_{\left(H_{\nu, \rho}^{2}(\Omega)\right)^*}.
$$
Furthermore, given that $L(a)\in C_c^\infty(\Omega\cup\Sigma)$, Theorem~\ref{Global Schauder estimates} grants the additional regularity $u\in C_{\phi, \phi^{\frac{n}2}\rho^{\frac12}}^{k+4, \alpha}(\Omega\cup\Sigma)$.

We then define $a' = a-\rho L^*u\in \mathcal{B}_{2}(\Omega)$. By construction, $L(a') = L(a)-L(\rho L^*u)=0$, which means $a'\in S$. In addition,
\begin{align*}
\left\|\Pi_{\left(\hat{B}(\mathcal{D})\right)^\perp} B(a')\right\|_{\mathcal{D}^*} & \ge \left\|\Pi_{\left(\hat{B}(\mathcal{D})\right)^\perp} B(a)\right\|_{\mathcal{D}^*} -  \left\|\Pi_{\left(\hat{B}(\mathcal{D})\right)^\perp} B(\rho L^*u)\right\|_{\mathcal{D}^*}\\
 	& \ge 1 - \left\| B(\rho L^*u)\right\|_{\mathcal{D}^*}\\
	& \ge 1 - C \left(\|u\|_{H_{\rho}^2(\Omega)} + \|L(a)\|_{\left(H_{\nu, \rho}^{2}(\Omega)\right)^*}\right)\\
	& \ge 1- C'\|L(a)\|_{\left(H_{\nu, \rho}^{2}(\Omega)\right)^*}>\frac12.
\end{align*}
By choosing $\|L(a)\|_{\left(H_{\nu, \rho}^{2}(\Omega)\right)^*}$ to be sufficiently small, we ensure this final inequality holds. This means $B(a')\notin\hat{B}(\mathcal{D})$, and consequently, $a'\notin S'$.

We have thus constructed a tensor $a' \in (S\setminus S')\cap\mathcal{B}_{2}(\Omega)$ such that $B(a')\notin\hat{B}(\mathcal{D})$. This procedure can be repeated. Let $a_1 = a'$. We can now apply the same argument to the subspace $\hat{B}(\mathcal{D})\oplus\operatorname{span}\{B(a_1)\}$, which has codimension $p-1$, to find a second tensor $a_2 \in (S\setminus S')\cap\mathcal{B}_{2}(\Omega)$. Iterating this process $p$ times yields a set $\{a_1, \dots, a_p\}\subset (S\setminus S')\cap\mathcal{B}_{2}(\Omega)$. By construction, the images $B(a_1), \dots, B(a_p)\in \mathcal{B}_1(\Sigma)$ are linearly independent and span a complement to $\hat{B}(\mathcal{D})$, thus completing the proof. 
\end{proof}

For simplicity, we denote by $W \triangleq \operatorname{span}\{B(a_1), \dots, B(a_p) \}$ the complementary subspace. Let $S_p \triangleq \operatorname{span}\{a_1, \dots, a_p \} \subset S$. We now define a linear map $T_0: W  \rightarrow S_p$ by setting $T_0(B(a_j)) = a_j$ for $j=1, \dots, p$, and extending linearly. Explicitly, for any $\sum c_j B(a_j)\in W$, we have
$$
T_0\left(\sum c_j B(a_j)\right) = \sum c_j a_j.
$$
This map $T_0$ is bounded.

\bigskip
Therefore, we can construct a bounded linear map $T: \mathcal{D}^*\to S$ by
\begin{center}
\begin{tikzcd}[column sep = 0]
  \mathcal{D}^*\arrow[d, "T"]
  &=
  & \hat{B}(\mathcal{D})\arrow[d, "\rho L^*\circ\hat{B}^{-1}"] 
  &\oplus 
  &W\arrow[d, "T_0"]\\
  S
  &\supset
  & S'
  &\oplus
  & S_p
\end{tikzcd}
\end{center}
More precisely:
\begin{itemize}[leftmargin=*]
\item[1.] For any $\psi\in\hat{B}(\mathcal{D})\subset\mathcal{D}^*$, the map $T$ is defined in the following way, where \textbf{DP} indicates the Dirichlet problem (\ref{Dirichlet}):
\begin{equation}\label{T operator}
    \psi = \hat{B}(\hat{u}) \xmapsto{\hat{B}^{-1}} \hat{u} \xmapsto{\text{DP}} u \xmapsto{\rho L^*} \rho L^*u,
\end{equation}
and it is easy to see that $L(\rho L^*u) = 0, B(\rho L^*u) = \hat{B}(\hat{u}) = \psi$. Moreover, by Corollary~\ref{bounded inverse B}, there is a constant $C = C(n, g, \Omega, \Sigma, \rho)$ uniform for metrics $\mathcal{C}^{k+4}$-near $g$ such that 
\begin{equation}\label{weak}
    \|\rho L^*u\|_{\mathcal{L}^2_{\rho^{-1}}(\Omega)} \le C'\|u\|_{H^2_{\rho}(\Omega)} \le C''\|\hat{u}\|_{\mathcal{D}}\le C\|\psi\|_{\mathcal{D}^*}.
\end{equation}

\item[2.] For any $\psi\in W\subset\mathcal{D}^*$, the map $T$ behaves like $T_0$, 
$$
\psi = \sum c_j B(a_j) \xmapsto{T_0} \sum c_j a_j,
$$
and it is easy to see that $L\left(\sum c_j a_j\right) = 0, B\left(\sum c_j a_j\right) = \psi$.
\end{itemize}

In conclusion, $B\circ T$ is the identity on $\mathcal{D}^*$, i.e. $T$ is a right inverse for $B$. As a result, for any $\psi\in\mathcal{D}^*$, $T\psi\in S$ is a solution to (\ref{solution1}). Moreover, there is a constant $C = C(n, g, \Omega, \Sigma, \rho)$ uniform for metrics $\mathcal{C}^{k+4}$-near $g$ such that 
\begin{equation}\label{T weak estimate}
    \|T\psi\|_{\mathcal{L}^2_{\rho^{-1}}(\Omega)} \le C\|\psi\|_{\mathcal{D}^*}.
\end{equation}

\bigskip
We are ready to solve the general case and get the surjectivity of $\Psi$.

\begin{thm}[Surjectivity of $\Psi$]\label{non-homogeneous BVP}
Assuming $\operatorname{ker}\Phi^*=\{0\}$, for any $(f, \psi) \in \left(H_{\nu, \rho}^{2}(\Omega)\right)^*\times\mathcal{D}^*$, there is a solution $a\in \mathcal{L}_{\rho^{-1}}^2(\Omega)$ such that
\begin{align}\label{solution2}
\left\{\begin{aligned}
L(a) & = f \quad &\text{in } &\Omega\\ 
B(a) & = \psi &\text{on } &\Sigma.
\end{aligned}\right.
\end{align}
Moreover, there is a constant $C = C(n, g, \Omega, \Sigma, \rho)$ uniform for metrics $\mathcal{C}^{k+4}$-near $g$ such that 
\begin{equation}\label{soln regularity}
    \|a\|_{\mathcal{L}^2_{\rho^{-1}}(\Omega)} \le C\left(\|f\|_{\left(H_{\nu, \rho}^{2}(\Omega)\right)^*} + \|\psi\|_{\mathcal{D}^*}\right).
\end{equation}
\end{thm}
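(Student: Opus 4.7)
\medskip

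\textbf{Proof proposal.} The plan is to split the problem into two pieces: first kill the interior source $f$ with a tensor of the form $\rho L^{*}u_{1}$ that has zero boundary data for $u_{1}$ and $(u_{1})_{\nu}$, then correct the resulting boundary term using the right inverse $T$ constructed in the previous subsection. Concretely:

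\medskip

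First, apply Theorem \ref{Dirichlet2} to obtain a unique $u_{1}\in H_{\rho}^{2}(\Omega)$ solving
\begin{align*}
L(\rho L^{*}u_{1})=f\ \text{in }\Omega,\qquad u_{1}=(u_{1})_{\nu}=0\ \text{on }\Sigma,
\end{align*}
with $\|u_{1}\|_{H_{\rho}^{2}(\Omega)}\le C\|f\|_{H_{\rho^{-1}}^{-2}(\Omega)}$. Set $a_{1}=\rho L^{*}u_{1}\in\mathcal{S}^{(0,2)}\cap\mathcal{L}_{\rho^{-1}}^{2}(\Omega)$; then $L(a_{1})=f$ and Proposition \ref{operator Holder}(iii) (together with the definition of the weighted norm) yields $\|a_{1}\|_{\mathcal{L}_{\rho^{-1}}^{2}(\Omega)}\le C\|L^{*}u_{1}\|_{\mathcal{L}_{\rho}^{2}(\Omega)}\le C\|f\|_{H_{\rho^{-1}}^{-2}(\Omega)}$.

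\medskip

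Next, I claim the functional $\psi_{1}:=B(a_{1})$ lies in $\mathcal{D}^{*}$ with $\|\psi_{1}\|_{\mathcal{D}^{*}}\le C\|f\|_{H_{\rho^{-1}}^{-2}(\Omega)}$. For any $\hat{v}\in\mathcal{D}$ with Dirichlet extension $v\in H_{\rho}^{2}(\Omega)$ (so $v_{\nu}=0$ on $\Sigma$), Proposition \ref{IBP} applied to the smooth density pairing (using the density of $C^{\infty}(\bar{\Omega})$ from Lemma \ref{density lemma} together with Theorem \ref{Dirichlet2}) gives
\begin{align*}
\int_{\Omega}fv\,d\mu_{g}
=\int_{\Omega}\langle \rho L^{*}u_{1},L^{*}v\rangle\,d\mu_{g}
-\int_{\Sigma}\bigl(2\hat{v}\dot{H}(a_{1})+\hat{v}\langle a_{1},h\rangle_{\hat{g}}\bigr)d\sigma_{g},
\end{align*}
so that
\begin{align*}
\psi_{1}(\hat{v})=2\!\int_{\Sigma}\!\hat{v}\dot{H}(a_{1})\,d\sigma_{g}
=-\!\int_{\Omega}\!fv\,d\mu_{g}+\!\int_{\Omega}\!\langle L^{*}u_{1},L^{*}v\rangle\rho\,d\mu_{g}-\!\int_{\Sigma}\!\hat{v}\langle a_{1},h\rangle_{\hat{g}}\,d\sigma_{g}.
\end{align*}
The first two terms are bounded by $C\|f\|_{H_{\rho^{-1}}^{-2}}\|v\|_{H_{\rho}^{2}}\le C\|f\|_{H_{\rho^{-1}}^{-2}}\|\hat{v}\|_{\mathcal{D}}$ by duality and by Theorem \ref{Poincare}/Corollary \ref{Dirichlet estimate}. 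The boundary term is controlled exactly by the estimate established in the proof that $K_{2}$ is compact (which only uses $\|u_{1}\|_{H_{\rho}^{2}(\Omega)}$ and the trace inequality for $u_{1}\rho^{1/2}$), giving the same bound. Combining, $\|\psi_{1}\|_{\mathcal{D}^{*}}\le C\|f\|_{H_{\rho^{-1}}^{-2}(\Omega)}$.

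\medskip

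Finally, feed $\psi-\psi_{1}\in\mathcal{D}^{*}$ into the right inverse $T$ constructed after Corollary \ref{closed}: set $a_{2}=T(\psi-\psi_{1})\in S$, so $L(a_{2})=0$, $B(a_{2})=\psi-\psi_{1}$, and by \eqref{T weak estimate},
\begin{align*}
\|a_{2}\|_{\mathcal{L}_{\rho^{-1}}^{2}(\Omega)}\le C\|\psi-\psi_{1}\|_{\mathcal{D}^{*}}\le C\bigl(\|\psi\|_{\mathcal{D}^{*}}+\|f\|_{H_{\rho^{-1}}^{-2}(\Omega)}\bigr).
\end{align*}
Putting $a=a_{1}+a_{2}$ yields $L(a)=f$, $B(a)=\psi$, and the desired estimate \eqref{soln regularity} after adding the bounds for $a_{1}$ and $a_{2}$.

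\medskip

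The only delicate step is the second one: making sense of $B(a_{1})$ as an element of $\mathcal{D}^{*}$. A naive reading of $2\dot{H}(\rho L^{*}u_{1})$ involves third derivatives of $u_{1}$, which $u_{1}\in H_{\rho}^{2}$ does not literally have. The integration by parts identity above effectively moves those derivatives onto the test function $v$ (or absorbs them into boundary integrals already controlled by the $K_{2}$-type estimate), and this is the main place where care is needed. Everything else is a direct assembly of results already proved.
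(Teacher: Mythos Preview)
Your proposal is correct and follows exactly the same scheme as the paper: first solve the Dirichlet problem (Theorem~\ref{Dirichlet2}) for $u_{1}$ to absorb $f$, then apply the right inverse $T$ to $\psi-B(\rho L^{*}u_{1})$ and add. You are in fact \emph{more} careful than the paper at the one genuinely delicate point, namely showing that $B(\rho L^{*}u_{1})\in\mathcal{D}^{*}$ with the right bound; the paper simply writes $\|B(\rho L^{*}u_{0})\|_{\mathcal{D}^{*}}\le C\|\rho L^{*}u_{0}\|_{\mathcal{L}^{2}_{\rho^{-1}}(\Omega)}$ in its estimate chain without further comment, whereas you supply the integration-by-parts argument. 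One minor slip: the bound $\|\rho L^{*}u_{1}\|_{\mathcal{L}^{2}_{\rho^{-1}}(\Omega)}=\|L^{*}u_{1}\|_{\mathcal{L}^{2}_{\rho}(\Omega)}$ is an immediate identity from the definition of the weighted norms, not a consequence of Proposition~\ref{operator Holder}(iii), which concerns weighted H\"older spaces.
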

\begin{proof}
We construct a solution to (\ref{solution2}) using the principle of superposition.

Let $a_0\in\mathcal{L}_{\rho^{-1}}^2(\Omega)$ be any solution to $L(a_0)=f$. For a concrete choice, we take $a_0 = \rho L^*u_0$, where $u_0$ is the unique solution to the Dirichlet problem with zero boundary data (\ref{Dirichlet eqn2 weak}). Then using the result we get for (\ref{solution1}), we can solve 
\begin{align*}
\left\{\begin{aligned}
L(a) & = 0 \quad &\text{in } &\Omega\\ 
B(a) & = \psi - B(a_0) &\text{on } &\Sigma
\end{aligned}\right.
\end{align*}
and obtain a solution $a_1 = T(\psi - B(a_0))\in S$. 

By linearity, $a_2 = a_0 + a_1\in\mathcal{L}_{\rho^{-1}}^2(\Omega)$ is a desired solution of (\ref{solution2}). 
Therefore, for any $(f, \psi) \in \left(H_{\nu, \rho}^{2}(\Omega)\right)^*\times\mathcal{D}^*$, a solution to (\ref{solution2}) is given by
\begin{equation}\label{weak solution}
    a_2 = \rho L^*u_0 + T\left(\psi - B(\rho L^*u_0)\right),
\end{equation}
where $u_0$ is the unique solution of the Dirichlet problem (\ref{Dirichlet eqn2 weak}). Moreover,
\begin{align*}
    \|a_2\|_{\mathcal{L}^2_{\rho^{-1}}(\Omega)} & \le \|\rho L^*u_0\|_{\mathcal{L}^2_{\rho^{-1}}(\Omega)} + \|T\left(\psi - B(\rho L^*u_0)\right)\|_{\mathcal{L}^2_{\rho^{-1}}(\Omega)}\\
    & \le C\left(\|u_0\|_{H^2_\rho(\Omega)} + \|\psi - B(\rho L^*u_0)\|_{\mathcal{D}^*}\right)\\
    & \le C\left(\|u_0\|_{H^2_\rho(\Omega)} + \|\psi\|_{\mathcal{D}^*} + \|u_0\|_{H^2_\rho(\Omega)} + \|f\|_{\left(H_{\nu, \rho}^{2}(\Omega)\right)^*}\right).
\end{align*}
Combined with Theorem \ref{Dirichlet1}, we get
\begin{equation*}
    \|a_2\|_{\mathcal{L}^2_{\rho^{-1}}(\Omega)} \le C\left(\|f\|_{\left(H_{\nu, \rho}^{2}(\Omega)\right)^*} + \|\psi\|_{\mathcal{D}^*}\right).
\end{equation*}
\end{proof}

\subsection{Weighted Schauder estimates}\label{sec4.6}
In this subsection, we improve the regularity of the weak solution (\ref{weak solution}) and get the following Schauder estimates.

\begin{thm}\label{weighted Schauder}
Let $g_0$ be a $\mathcal{C}^{k+4,\alpha}$-metric such that the operator $\Phi^*$ has trivial kernel in $H^2_{\operatorname{loc}}(\Omega\cup\Sigma)$. Then there is a constant $C$ uniform for metrics near $g_0$ in $\mathcal{C}^{k+4,\alpha}(\overline{\Omega})$ such that for $(f, \psi)\in \mathcal{B}_0(\Omega)\times\mathcal{B}_{1}(\Sigma)$, if $a\in\mathcal{L}_{\rho^{-1}}^2(\Omega)$ is a weak solution of (\ref{solution2}) given by (\ref{weak solution}), then $a\in\mathcal{B}_2(\Omega)$ and
\begin{equation}
    \|a\|_{\mathcal{B}_2(\Omega)} \le C \|(f, \psi)\|_{\mathcal{B}_0(\Omega)\times\mathcal{B}_{1}(\Sigma)}.
\end{equation}
\end{thm}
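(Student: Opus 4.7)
The plan is to upgrade regularity separately for the two summands of the weak solution (\ref{weak solution}): set $a_0 := \rho L^* u_0$, where $u_0$ is the unique solution of the Dirichlet problem (\ref{Dirichlet eqn2}) with right-hand side $f$, and $a_1 := T(\psi - B(a_0))$, where $T$ is the right inverse for $B$ defined via (\ref{T operator}) on $\hat B(\mathcal{D})$ and via the finite-dimensional map $T_0$ on its orthogonal complement. Both infinite-dimensional pieces are of the form $\rho L^* u$ for a scalar $u$ solving a fourth-order elliptic boundary value problem $L(\rho L^* u) = \tilde f$ on $\Omega$ with boundary data on $\Sigma$, so the estimate for $a$ reduces to an elliptic regularity statement for $u$ in the space $\mathcal{B}_4(\Omega)$, followed by the multiplication and differentiation properties in Proposition \ref{operator Holder}.

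Elliptic regularity for $u$ is obtained in three geographic pieces. On the compact interior $\Omega_{r_0}$, where $\rho \equiv 1$ and the principal symbol of $L \circ (\rho L^*)$ is proportional to $\Delta^2$ with $C^{2,\alpha}$ coefficients, classical interior Schauder estimates bound $\|u\|_{C^{4,\alpha}(\Omega_{r_0})}$ by $\|\tilde f\|_{C^{0,\alpha}} + \|u\|_{L^2}$ on a slightly larger region. Up to $\Sigma$, the two boundary conditions are the Dirichlet pair $u = u_\nu = 0$ for $u_0$, and for the $\hat B(\mathcal{D})$-part of $a_1$ the pair $u_\nu = 0$ together with the third-order condition $B(\rho L^* u) = \psi - B(a_0)$. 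Both pairs complement $\Delta^2$ in the sense of Agmon-Douglis-Nirenberg, so the boundary Schauder estimate \cite{A-D-N} yields $C^{4,\alpha}$ estimates up to $\Sigma$ with boundary data measured in $C^{1,\alpha}(\Sigma)$, matching the Hölder regularity of $\psi$ in $\mathcal{B}_1(\Sigma)$. Near $\partial\Omega \setminus \Sigma$, where $\rho = e^{-N/d}$ degenerates, I would invoke the scaling technique of L.~Simon \cite{lS4} as implemented in Corvino-Huang \cite{C-H}: cover the collar by balls $B_{\phi(x)}(x)$ with $\phi = d^2$, rescale each ball to the unit ball, use Proposition \ref{C-H estimate}(ii)--(iii) to see that $\rho$ is essentially constant on the ball and its logarithmic derivatives rescale correctly, apply the interior Schauder estimate on the fixed unit ball, and rescale back. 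The exponents $4+\tfrac{n}{2}, 2+\tfrac{n}{2}, \tfrac{n}{2}$ in the definitions of $\mathcal{B}_0, \mathcal{B}_2, \mathcal{B}_4$ are calibrated precisely so that this ball-by-ball Schauder estimate consolidates into a uniform global weighted estimate.

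Combining the three local estimates with a finite cover and absorbing the $H^2_\rho$ remainder using the a priori bound (\ref{weak}) together with Theorem \ref{Dirichlet2}, one obtains
\begin{equation*}
\|u\|_{\mathcal{B}_4(\Omega)} \le C\bigl(\|f\|_{\mathcal{B}_0(\Omega)} + \|\psi\|_{\mathcal{B}_1(\Sigma)}\bigr).
\end{equation*}
By Proposition \ref{operator Holder}(i) applied twice and (iii), the map $u \mapsto \rho L^* u$ sends the Hölder part of $\mathcal{B}_4$ continuously into the Hölder part of $\mathcal{B}_2$, while $\rho L^* u \in \mathcal{L}^2_{\rho^{-1}}(\Omega)$ follows from $u \in H^2_\rho(\Omega)$. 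For the finite-dimensional piece, choose the basis $\{a_1, \ldots, a_p\}$ for $T_0\bigl((\hat B(\mathcal{D}))^\perp\bigr)$ inside $\mathcal{B}_2 \cap (S')^\perp$, using for instance the smooth compactly supported elements from \cite{B-E-M} that appear in the proof of Proposition \ref{S'}; equivalence of norms on this fixed finite-dimensional subspace then gives a bound $\|T_0 \psi'\|_{\mathcal{B}_2} \le C\|\psi'\|_{\mathcal{D}^*}$. Summing the two contributions yields the claimed estimate.

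The main technical obstacle is the weighted scaling step in the collar $V_\Omega$: one must verify that derivatives of the exponential weight $\rho = e^{-N/d}$ up to order four are controlled by prescribed powers of $\phi = d^2$ uniformly across $C^{4,\alpha}$-neighborhoods of $g_0$, which is the exact content of Proposition \ref{C-H estimate}(ii). A secondary but routine point is the verification of the ADN complementing condition for the mixed first/third-order boundary pair at $\Sigma$; since the principal symbol of $L \circ (\rho L^*)$ reduces to a multiple of $\Delta^2$ and the top-order part of $B$ contributes a normal derivative of $\Delta u$, this is the standard complementing condition for the biharmonic operator and is satisfied.
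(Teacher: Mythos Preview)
Your approach is essentially identical to the paper's: decompose via (\ref{weak solution}), obtain weighted $C^{4,\alpha}$ estimates for the scalar $u$ by combining interior Schauder estimates with the ADN boundary estimate at $\Sigma$ (for both the Dirichlet pair and the mixed $u_\nu=0$, $B(\rho L^*u)=\psi$ pair), handle the collar $V_\Omega$ by the Simon/Corvino--Huang rescaling using Proposition~\ref{C-H estimate}, absorb the $L^2_\rho$ remainder via (\ref{DP}) and (\ref{weak}), and then push $u\in\mathcal{B}_4$ to $\rho L^*u\in\mathcal{B}_2$ by Proposition~\ref{operator Holder}. The paper carries out exactly these steps, including the explicit verification of the complementing condition for the third-order boundary operator (which is not quite $\partial_\nu\Delta$ but rather $-n\sum_i\partial_i^2\partial_\nu+(1-n)\partial_\nu^3$; the check is short but not entirely standard).

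One concrete slip: the compactly supported tensors from \cite{B-E-M} used in Proposition~\ref{S'} vanish near $\Sigma$ and hence satisfy $B(a')=0$, so they cannot serve as the basis $\{a_j\}$ for $T_0\bigl((\hat B(\mathcal{D}))^\perp\bigr)$, which must have $B(a_j)=\psi_j\neq 0$. The paper is also terse at this point, simply invoking finite-dimensionality of $(\hat B(\mathcal{D}))^\perp\cap C^{1,\alpha}_{\phi,\phi^{3+n/2}\rho^{-1/2}}(\Sigma)$ to get a large constant; the honest content is that the $a_j$ are fixed once and for all as elements of $\mathcal{B}_2\cap S$ (which requires choosing them with sufficient regularity, not merely in $\mathcal{L}^2_{\rho^{-1}}$), after which equivalence of norms on the finite-dimensional image gives the bound.
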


We will specify the dependence of the constant $C$ in the following discussion.

We note that system (\ref{solution2}) is not elliptic, so standard regularity theory does not directly apply. Nevertheless, we can obtain the desired estimates by leveraging the structure of the weak solutions. As shown in Section~\ref{sec: weak soln}, these solutions take the form $a=\rho L^*u$ modulo a finite-dimensional subspace. For the $a=\rho L^*u$ component, the system becomes an elliptic boundary value problem. Meanwhile, the finite-dimensional subspace can be chosen from $\mathcal{B}_{2}(\Omega)$. This decomposition provides a path to the required estimates.

Due to the complexity of the solution $(\ref{weak solution})$, we will improve its regularity term by term. Following the same strategy used for the weak solution, we will first analyze the Dirichlet problem $(\ref{Dirichlet eqn2 weak})$ and then study the operator $T$.

We adopt the local analysis framework from \cite{C-H} to define coordinate systems and function pullbacks. For any interior point $x\in\Omega$, consider the ball $B_{\phi(x)}(x)$ centered at $x$ with a radius given by the weight function $\phi(x)$ from Section~\ref{weighted Holder space}. We map the unit ball $B_1(0)\subset\mathbb{R}^n$ to this local neighborhood using the diffeomorphism $F_x: B_1(0) \to B_{\phi(x)}(x)$ defined by: 
$$
y = F_x(z) = x + \phi(x)z.
$$
For a boundary point (i.e., $x$ on or near $\Sigma$), we adapt this by considering the intersection $B^+_{\phi(x)}(x) = B_{\phi(x)}(x)\cap(\Omega\cup\Sigma)$ and use a similar map from the unit half-ball $B^+_1(0)$. In both cases, we identify the domain with its coordinate image. The pullback of a function $f$ from the local neighborhood to the corresponding unit (half-)ball is denoted by $\tilde{f}$ and defined as:
$$
\tilde{f}(z) = F_x^*(f)(z) = f \circ F_x(z).
$$

With a minor abuse of notation, we denote for $a \in (0, 1]$ and $x\in\Omega$,
$$
\|f\|_{C_{\phi, \varphi}^{k, \alpha}(B_{a\phi(x)}(x))} = \sum_{j=0}^{k} \varphi(x) \phi^{j}(x)\left\|D^{j} f\right\|_{C^{0}\left(B_{a\phi(x)}(x)\right)}+\varphi(x) \phi^{k+\alpha}(x)\left[D^{k} f\right]_{0, \alpha ; B_{a\phi(x)}(x)};
$$
and for $x$ near $\Sigma$,
$$
\|f\|_{C_{\phi, \varphi}^{k, \alpha}(B^+_{a\phi(x)}(x))} = \sum_{j=0}^{k} \varphi(x) \phi^{j}(x)\left\|D^{j} f\right\|_{C^{0}\left(B^+_{a\phi(x)}(x)\right)}+\varphi(x) \phi^{k+\alpha}(x)\left[D^{k} f\right]_{0, \alpha ; B^+_{a\phi(x)}(x)}.
$$

We have the following useful lemma.
\begin{lem}[Corvino-Huang \cite{C-H}]\label{scaling lemma}
Let $f$ and $g$ be functions defined on $B_{\phi(x)}(x)$. The following properties hold.\\
(i) $\widetilde{f+g} = \tilde{f} + \tilde{g}$ and $\widetilde{fg} = \tilde{f}\tilde{g}$.\\
(ii) $\widetilde{\p^\beta_y f} = (\phi(x))^{-|\beta|}\p^\beta_z \tilde{f}$, where $\beta = (\beta_1, \dots, \beta_k)$ is a multi-index.\\
(iii) For any $a\in(0,1]$,
\begin{align*}
    \|\varphi(x)\tilde{f}\|_{C^{k, \alpha}(B_{a}(0))} & = \|f\|_{C_{\phi, \varphi}^{k, \alpha}(B_{a\phi(x)}(x))}\\
    \|\varphi(x)\tilde{f}\|_{L^2(B_{a}(0))} & = \|f\|_{L_{\phi^{-n}\varphi^2}^2(B_{a\phi(x)}(x))}.
\end{align*}
\end{lem}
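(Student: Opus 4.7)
The plan is to verify each of (i), (ii), (iii) directly from the definition of the pullback $\tilde{f}(z) = f(F_x(z))$ with $F_x(z) = x + \phi(x) z$, treating the parameters $\phi(x)$ and $\varphi(x)$ as constants (since $x$ is fixed and only $z$ varies). Part (i) is immediate: pullback by a diffeomorphism is an algebra homomorphism, so $\widetilde{f+g} = \tilde{f} + \tilde{g}$ and $\widetilde{fg} = \tilde{f}\tilde{g}$ follow pointwise from $(f+g)(F_x(z)) = f(F_x(z)) + g(F_x(z))$ and similarly for the product.

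For (ii), I would apply the chain rule. Since $F_x^\alpha(z) = x^\alpha + \phi(x) z^\alpha$, the Jacobian is $\partial F_x^\alpha / \partial z^i = \phi(x)\,\delta^\alpha_i$. Thus $\partial_{z^i} \tilde{f}(z) = \phi(x)(\partial_{y^i} f)(F_x(z)) = \phi(x)\,\widetilde{\partial_{y^i} f}(z)$, and iterating over a multi-index $\beta$ gives $\partial^\beta_z \tilde{f} = \phi(x)^{|\beta|} \widetilde{\partial^\beta_y f}$, which rearranges to the claimed identity.

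For (iii), the $C^{k,\alpha}$ part combines (ii) with the scaling of the Hölder seminorm. From (ii), for any $y = F_x(z)$ we have $|D^j_y f(y)| = \phi(x)^{-j} |D^j_z \tilde{f}(z)|$, so
\begin{equation*}
\varphi(x)\,\phi(x)^j \|D^j_y f\|_{C^0(B_{a\phi(x)}(x))} = \varphi(x)\, \|D^j_z \tilde{f}\|_{C^0(B_a(0))}.
\end{equation*}
For the seminorm, $|z_1 - z_2| = \phi(x)^{-1}|y_1 - y_2|$ gives $[D^k_y f]_{\alpha; B_{a\phi(x)}(x)} = \phi(x)^{-(k+\alpha)} [D^k_z \tilde{f}]_{\alpha; B_a(0)}$, so the factor $\varphi(x)\phi(x)^{k+\alpha}$ cancels exactly. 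Summing yields the first identity in (iii). For the $L^2$ statement, the change of variables $y = F_x(z)$ gives $dy = \phi(x)^n\, dz$, so with the weight $\phi^{-n}\varphi^2$ understood as evaluated at the center $x$ (per the convention of the weighted norms where $\phi(x), \varphi(x)$ are constants),
\begin{equation*}
\|f\|_{L^2_{\phi^{-n}\varphi^2}(B_{a\phi(x)}(x))}^2 = \phi(x)^{-n}\varphi(x)^2 \int_{B_{a\phi(x)}(x)} f(y)^2\, dy = \varphi(x)^2 \int_{B_a(0)} \tilde{f}(z)^2\, dz,
\end{equation*}
which is $\|\varphi(x)\tilde{f}\|_{L^2(B_a(0))}^2$.

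There is essentially no analytic obstacle here; the lemma is a bookkeeping exercise in the change-of-variables formula. The only point requiring care is consistent interpretation of the localized weighted norms: since $x$ is fixed and $\phi(x), \varphi(x)$ are treated as constants in the definition of $\|\cdot\|_{C^{k,\alpha}_{\phi,\varphi}(B_{a\phi(x)}(x))}$ and in the weight of $L^2_{\phi^{-n}\varphi^2}(B_{a\phi(x)}(x))$, each factor of $\phi(x)$ appearing in the norm is exactly designed to balance the Jacobian powers produced by (ii) and by the volume element, so the equalities hold without any approximation constants. This is what makes the $\phi$-scaling responsible for the scale-invariance alluded to in Section \ref{weighted Holder space}.
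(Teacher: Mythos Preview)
Your proposal is correct and follows exactly the intended approach: the paper does not supply its own proof of this lemma but cites it from \cite{C-H}, and the argument there is precisely the direct change-of-variables verification you carry out. Your care in noting that $\phi(x)$ and $\varphi(x)$ are constants (evaluated at the fixed center $x$) in the localized norms is the key point that makes the equalities exact rather than up to constants.
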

\begin{rmk}
For $x$ near $\Sigma$, we also have the properties mentioned above in the half ball.
\end{rmk}

\subsubsection{Weighted Schauder estimates for the Dirichlet problem}
Let us first consider the Dirichlet problem with zero boundary data:
\begin{align}\label{weighted DP}
\left\{\begin{aligned}
\rho^{-1}L(\rho L^*u) & = \rho^{-1}f \quad &\text{in } &\Omega\\ 
u & = 0 &\text{on } &\Sigma\\
u_\nu & = 0 &\text{on } &\Sigma.
\end{aligned}\right.
\end{align}
Following Schauder estimates by scaling \cite{lS4} and using both the properties outlined above and those discussed in Section \ref{weighted Holder space}, we are able to get the interior and boundary Schauder estimates.

The following interior Schauder estimates have already been discussed in \cites{C, C-E-M}:
\begin{lem}[Interior Schauder estimates]
For any $k\in\mathbb{N}$ and any $r, s\in\mathbb{R}$, there is a constant $C$ such that for any $x\in\Omega$,
\begin{equation}\label{interior A-D-N}
    \|u\|_{C_{\phi, \phi^r\rho^s}^{k+4, \alpha}(B_{\phi(x)/2}(x))} \le C\left(\|f\|_{C_{\phi, \phi^{r+4}\rho^{s-1}}^{k, \alpha}(B_{\phi(x)}(x))} + \|u\|_{L_{\phi^{-n}(\phi^{2r}\rho^{2s})}^2(B_{\phi(x)}(x))}\right).
\end{equation}
Here the constant $C$ depends only on $\Omega, g_0, k, n, \rho, \alpha, r, s$, the ellipticity constant and the coefficients of the operator $\rho^{-1}L(\rho L^*u)$, and the constant in Proposition \ref{C-H estimate}. In particular, the constant $C$ is uniform for metrics $\mathcal{C}^{k+4, \alpha}$-near $g_0$.
\end{lem}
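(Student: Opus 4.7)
The strategy is a standard scaling argument via the diffeomorphism $F_x\colon B_1(0)\to B_{\phi(x)}(x)$, $z\mapsto x+\phi(x)z$. First, I would verify that $u\mapsto \rho^{-1}L(\rho L^*u)$ is a scalar fourth-order uniformly elliptic operator whose coefficients, after extracting the natural scaling, are uniformly controlled near $\partial\Omega\setminus\Sigma$. A direct expansion via the product rule gives, schematically,
$$
\rho^{-1}L(\rho L^*u) = L(L^*u) + \rho^{-1}(\nabla\rho)\cdot\nabla^3 u + \rho^{-1}(\operatorname{Hess}\rho)\cdot\nabla^2 u + \text{l.o.t.},
$$
with principal symbol $(n-1)\Delta^2$, and Proposition \ref{C-H estimate}(ii) gives $|\phi^{k}\rho^{-1}D^k\rho|\le C$. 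Thus writing the equation as $\sum_{|\beta|\le 4}a_\beta(y)\,\p_y^\beta u(y)=\rho^{-1}f(y)$, the coefficient $a_\beta(y)$ is pointwise controlled by $\phi(y)^{|\beta|-4}$ times $C^{k,\alpha}$ quantities depending only on $g_0$.

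Next, I would pull back via $F_x$ and multiply through by $\phi(x)^4$. By Lemma \ref{scaling lemma}(ii), $\p_y^\beta u = \phi(x)^{-|\beta|}\p_z^\beta\tilde u$, and by Proposition \ref{C-H estimate}(iii), $\phi$ and $\rho$ are uniformly comparable to $\phi(x)$ and $\rho(x)$ on $B_{\phi(x)}(x)$. The rescaled equation takes the form
$$
\sum_{|\beta|\le 4}\tilde a_\beta(z)\,\p_z^\beta\tilde u(z) = \phi(x)^{4}\,\widetilde{\rho^{-1}f}(z)\qquad\text{on }B_1(0),
$$
with $\tilde a_\beta$ uniformly bounded in $C^{k,\alpha}(B_1(0))$ and uniformly elliptic across $g$ in a $C^{k+4,\alpha}$-neighborhood of $g_0$. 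The standard interior Schauder estimate for fourth-order scalar elliptic equations (\cite{A-D-N} combined with $L^2\to C^{k,\alpha}$ bootstrapping as in \cite{lS4}) then yields
$$
\|\tilde u\|_{C^{k+4,\alpha}(B_{1/2}(0))}\le C\bigl(\phi(x)^{4}\|\widetilde{\rho^{-1}f}\|_{C^{k,\alpha}(B_1(0))}+\|\tilde u\|_{L^2(B_1(0))}\bigr).
$$

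Finally, I would multiply both sides by $\varphi(x)=\phi(x)^{r}\rho(x)^{s}$ and translate back via Lemma \ref{scaling lemma}(iii). The left-hand side is exactly $\|u\|_{C^{k+4,\alpha}_{\phi,\varphi}(B_{\phi(x)/2}(x))}$, and the $L^2$ term becomes $\|u\|_{L^2_{\phi^{-n}(\phi^{2r}\rho^{2s})}(B_{\phi(x)}(x))}$. For the $f$-term, the same derivative bounds from Proposition \ref{C-H estimate}(ii)--(iii) give $\|\widetilde{\rho^{-1}}\|_{C^{k,\alpha}(B_1(0))}\le C\rho(x)^{-1}$, hence
$$
\varphi(x)\phi(x)^{4}\|\widetilde{\rho^{-1}f}\|_{C^{k,\alpha}(B_1(0))}\le C\phi(x)^{r+4}\rho(x)^{s-1}\|\tilde f\|_{C^{k,\alpha}(B_1(0))} = C\|f\|_{C^{k,\alpha}_{\phi,\phi^{r+4}\rho^{s-1}}(B_{\phi(x)}(x))},
$$
which is (\ref{interior A-D-N}). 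The main obstacle is the coefficient bound in the second step: keeping the pulled-back coefficients uniformly $C^{k,\alpha}(B_1(0))$ requires the full force of Proposition \ref{C-H estimate}, whose exponential weight is tailor-made so that successive derivatives of $\rho$ pick up exactly the powers of $\phi^{-1}$ absorbed by the $\phi(x)^{4}$ rescaling; tracking these factors carefully through the entire $C^{k,\alpha}$ scale (and not just at the principal-symbol level) is the only delicate ingredient.
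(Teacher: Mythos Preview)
Your proposal is correct and follows essentially the same scaling approach that the paper uses. The paper itself does not supply a proof of this interior lemma (it merely attributes the estimate to Corvino \cite{C}), but the analogous boundary version is proved in the paper by exactly the strategy you outline: rewrite $\rho^{-1}L(\rho L^*u)=\sum_{|\beta|\le 4}b_\beta\partial^\beta u$, show $\|b_\beta\|_{C^{k,\alpha}_{\phi,\phi^{4-|\beta|}}}\le C$, pull back via $F_x$, multiply by $\phi(x)^4$ to uniformize the coefficients, apply the standard Schauder estimate on the unit ball, then multiply by $\phi(x)^r\rho(x)^s$ and use Lemma \ref{scaling lemma}(iii) to unwind. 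The only cosmetic difference is in handling the $\rho^{-1}f$ term: the paper first obtains $\|\rho^{-1}f\|_{C^{k,\alpha}_{\phi,\phi^{r+4}\rho^s}}$ and then invokes Proposition \ref{operator Holder}(iii) to shift to $\|f\|_{C^{k,\alpha}_{\phi,\phi^{r+4}\rho^{s-1}}}$, whereas you bound $\|\widetilde{\rho^{-1}}\|_{C^{k,\alpha}(B_1(0))}\le C\rho(x)^{-1}$ directly; both routes are equivalent and rely on the same derivative bounds from Proposition \ref{C-H estimate}(ii)--(iii).
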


Let us now derive the boundary Schauder estimates. To apply \cite{A-D-N} in obtaining these estimates, we must verify that (\ref{weighted DP}) satisfies the three conditions outlined in \cite{A-D-N}*{Section 7}.

\textit{1. Condition on the interior operator $\rho^{-1}L(\rho L^*u)$:}

After simple calculation, we can see that the leading order term is $L' = (n-1)\Delta_{g}^2$, which is uniformly elliptic. For point $x$ on the boundary and $\xi, \nu$ the unit tangent and normal vectors, the following equation
\begin{equation*}
    0 = L'(x; \xi+\tau\nu) = (n-1)\rho\sum_{i,j}\left(\xi+\tau\nu\right)_i^2\left(\xi+\tau\nu\right)_j^2 = (n-1)(1+\tau^2)^2
\end{equation*}
has exactly 2 roots ($\sqrt{-1}$ and $\sqrt{-1}$) with positive imaginary part.

\textit{2. Complementing condition:}

It's mentioned in \cite{A-D-N}*{p.~627} that the boundary operators for the Dirichlet problem satisfy the “complementing condition” relative to any operator for which \textit{Condition 1} holds.

\textit{3. Boundedness of the coefficients:}

We may rewrite the interior operator as
$$
\rho^{-1}L(\rho L^*u) = \sum_{|\beta|=0}^4 b_\beta\partial^\beta u,
$$
where $b_\beta$ is a degree one polynomial of $\rho^{-1} D^l \rho \,\,\, (l = 0, 1, 2; \, l+|\beta|\le 4)$ with coefficients depending locally uniformly on $g_0 \in C^{k+4, \alpha}(\Omega\cup\Sigma)$. By Lemma \ref{scaling lemma} and remark, we have the following estimate.

\begin{lem}
    The coefficients $b_\beta$ satisfy
    $$\|b_\beta\|_{C_{\phi, \phi^{4-|\beta|}}^{k, \alpha}(\Omega\cup\Sigma)} \le C,$$
    where the constant $C$ depends locally uniformly on $g_0 \in \mathcal{C}^{k+4, \alpha}(\Omega\cup\Sigma)$.
\end{lem}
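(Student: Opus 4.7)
The plan is to expand $b_\beta$ as a finite sum of monomials of a particular algebraic form, and then control each monomial using Proposition~\ref{C-H estimate}(ii) and Proposition~\ref{operator Holder}(ii).

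First, I would expand $\rho^{-1} L(\rho L^* u)$ via Leibniz. Writing $L^* u = \sum_{|j|\le 2} A_j \, \partial^j u$ with each $A_j$ a polynomial in $g_0, g_0^{-1}$ and up to two derivatives of $g_0$ (the Ricci contribution), and $L(a) = \sum_{|i|\le 2} B_i \, \partial^i a$ modulo lower-order terms in $a$ with analogous $B_i$, one obtains schematically
\[
\rho^{-1} L(\rho L^* u) = \sum_{i,i',j} \binom{i}{i'} B_i \, \rho^{-1} \partial^{i'}(\rho A_j) \, \partial^{\,i-i'+j} u.
\]
Distributing $\partial^{i'}$ over the product $\rho \cdot A_j$ and simplifying, each term contributing to $b_\beta$ is seen to have the form $c(g_0) \cdot \rho^{-1} D^l \rho$, where $c(g_0)$ is a polynomial expression in $g_0, g_0^{-1}$ and its derivatives up to order four (hence lies in $C^{k,\alpha}(\overline{\Omega \cup \Sigma})$ locally uniformly for $g_0 \in C^{k+4,\alpha}$), and the combinatorics force $l + |\beta| \le |i| + |j| \le 4$. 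This is the content of the ``degree-one polynomial in $\rho^{-1} D^l \rho$'' description stated before the lemma.

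Second, I would show that $\rho^{-1} D^l \rho \in C_{\phi, \phi^l}^{k,\alpha}(\Omega \cup \Sigma)$ with norm uniformly bounded for $g_0$ in a $C^{k+4,\alpha}$-neighborhood. The pointwise bound $|\phi^l \rho^{-1} D^l \rho| \le C$ is precisely Proposition~\ref{C-H estimate}(ii). For higher derivatives and the Hölder seminorm, an inductive argument applies: each application of $D$ to $\rho^{-1} D^l \rho$ produces $\rho^{-1} D^{l+1} \rho$ plus a product of lower-order expressions of the same type (via $D(\rho^{-1} D^l \rho) = \rho^{-1} D^{l+1}\rho - (\rho^{-1} D\rho)(\rho^{-1} D^l \rho)$ and its iterates), to each of which Proposition~\ref{C-H estimate}(ii) applies again. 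Passing to the scaled unit ball through Lemma~\ref{scaling lemma} converts the pointwise bounds on $B^+_{\phi(x)}(x)$ into the required weighted Hölder estimate. Proposition~\ref{operator Holder}(ii) then shows that multiplication by $c(g_0) \in C^{k,\alpha}(\overline{\Omega \cup \Sigma})$ preserves $C_{\phi, \phi^l}^{k,\alpha}$ membership with the norm of the product controlled by the product of norms.

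Finally, since $0 < \phi < 1$ and $l \le 4 - |\beta|$ in every monomial, we have $\phi^{4-|\beta|} \le \phi^l$ pointwise, so the continuous embedding $C_{\phi, \phi^l}^{k,\alpha}(\Omega \cup \Sigma) \hookrightarrow C_{\phi, \phi^{4-|\beta|}}^{k,\alpha}(\Omega \cup \Sigma)$ has operator norm at most one. Summing over the finitely many monomials constituting $b_\beta$ yields the claimed estimate. The only real obstacle is the combinatorial bookkeeping that ensures $l + |\beta| \le 4$ in every monomial that appears; this is immediate from the second-order nature of both $L$ and $L^*$ and the fact that the single $\rho$ factor sits between them in the composition, so any derivative falling on $\rho$ to produce a $\rho^{-1} D^l \rho$ factor necessarily removes a corresponding derivative from $u$.
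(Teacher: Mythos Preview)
Your proposal is correct and follows essentially the same line the paper intends: the paper states the lemma without detailed proof, merely recording that $b_\beta$ is a degree-one polynomial in $\rho^{-1}D^l\rho$ with $l+|\beta|\le 4$ and coefficients depending locally uniformly on $g_0$, and then invoking the scaling lemma and Proposition~\ref{C-H estimate}. Your Leibniz expansion, inductive control of $D^j(\rho^{-1}D^l\rho)$ via Proposition~\ref{C-H estimate}(ii)--(iii), multiplication estimate from Proposition~\ref{operator Holder}(ii), and the embedding $C^{k,\alpha}_{\phi,\phi^l}\hookrightarrow C^{k,\alpha}_{\phi,\phi^{4-|\beta|}}$ (using $\phi<1$ and $l\le 4-|\beta|$) are exactly the details the paper suppresses.
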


For $x\in\Sigma$, in each $F^{-1}_x\left(B^+_{\phi(x)}(x)\right)$,
$$
    \widetilde{\rho^{-1}f} = \sum_{|\beta|=0}^4 \widetilde{b_\beta}\widetilde{\partial^\beta u} = \sum_{|\beta|=0}^4 \widetilde{b_\beta}(\phi(x))^{-|\beta|}\p^\beta_z \tilde{u}.
$$
Multiplying $(\phi(x))^4$ to the above identity, we have
\begin{equation*}
    (\phi(x))^4\widetilde{\rho^{-1}f} = \sum_{|\beta|=0}^4 \left((\phi(x))^{4-|\beta|}\widetilde{b_\beta}\right)\p^\beta_z\tilde{u}.
\end{equation*}
Here the power of $\phi$ is specifically chosen such that each coefficient $(\phi(x))^{4-|\beta|}\widetilde{b_\beta}$ is bounded in $C^{k, \alpha}(B_1^+(0))$:
$$\|(\phi(x))^{4-|\beta|}\widetilde{b_\beta}\|_{C^{k, \alpha}(B^+_1(0))} = \|b_\beta\|_{C_{\phi, \phi^{4-|\beta|}}^{k, \alpha}(B^+_{\phi(x)}(x))} \le C.$$

Thus for the following re-scaled equations, the three conditions are all satisfied:
\begin{align}\label{A-D-N}
\left\{\begin{aligned}
 \sum_{|\beta|=0}^4 \left((\phi(x))^{4-|\beta|}\widetilde{b_\beta}\right)\p^\beta_z\tilde{u} & = (\phi(x))^4\widetilde{\rho^{-1}f} \quad &\text{in } &B^+_1(0)\\ 
 \tilde{u} & = 0 &\text{on } &\Sigma_0\\
\p_n \tilde{u} & = 0 &\text{on } &\Sigma_0
\end{aligned}\right.
\end{align}
where $\Sigma_0$ is the flat boundary of $B^+_1(0)$. We apply the boundary Schauder estimate \cite{A-D-N} to the equation (\ref{A-D-N}) and obtain
\begin{lem}[Boundary Schauder estimates]
For any $k\in\mathbb{N}$ and any $r, s\in\mathbb{R}$, there is a constant $C$ such that for any $x\in\Sigma$,
\begin{equation}\label{boundary A-D-N}
    \|u\|_{C_{\phi, \phi^r\rho^s}^{k+4, \alpha}(B^+_{\phi(x)/2}(x))} \le C\left(\|f\|_{C_{\phi, \phi^{r+4}\rho^{s-1}}^{k, \alpha}(B^+_{\phi(x)}(x))} + \|u\|_{L_{\phi^{-n}(\phi^{2r}\rho^{2s})}^2(B^+_{\phi(x)}(x))}\right).
\end{equation}
Here the constant $C$ depends only on $\Omega, \Sigma, g_0, k, n, \rho, \alpha, r, s$, the ellipticity constant of the operator $\rho^{-1}L(\rho L^*u)$, $\|b_\beta\|_{C_{\phi, \phi^{4-|\beta|}}^{k, \alpha}(\Omega\cup\Sigma)}$, and the constant in Proposition \ref{C-H estimate}. In particular, the constant $C$ is uniform for metrics $\mathcal{C}^{k+4, \alpha}$-near $g_0$.
\end{lem}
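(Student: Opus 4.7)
The plan is to reduce the estimate to the scaled problem on the fixed half-ball $B_1^+(0)$ via the diffeomorphism $F_x\colon z\mapsto x+\phi(x)z$, apply the classical Agmon--Douglis--Nirenberg boundary Schauder estimate there, and then translate the resulting inequality back to $B^+_{\phi(x)/2}(x)$ using the scaling identities in Lemma~\ref{scaling lemma}. The three ADN conditions for the interior operator and the zero Dirichlet boundary condition have already been verified above, and the scaled equation (\ref{A-D-N}) has coefficients $(\phi(x))^{4-|\beta|}\widetilde{b_\beta}$ that are uniformly bounded in $C^{k,\alpha}(B_1^+(0))$. Therefore ADN yields
$$
\|\tilde{u}\|_{C^{k+4,\alpha}(B_{1/2}^+(0))} \le C'\bigl(\|(\phi(x))^4\widetilde{\rho^{-1}f}\|_{C^{k,\alpha}(B_1^+(0))} + \|\tilde{u}\|_{L^2(B_1^+(0))}\bigr),
$$
where $C'$ depends only on the data listed in the lemma.

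Next I would multiply both sides by the weight $\phi^r(x)\rho^s(x)$ and apply part~(iii) of Lemma~\ref{scaling lemma} with $\varphi=\phi^r\rho^s$. The left-hand side becomes exactly $\|u\|_{C^{k+4,\alpha}_{\phi,\phi^r\rho^s}(B^+_{\phi(x)/2}(x))}$, and the $L^2$-term on the right becomes $\|u\|_{L^2_{\phi^{-n}(\phi^{2r}\rho^{2s})}(B^+_{\phi(x)}(x))}$. For the $f$-term, factor $\widetilde{\rho^{-1}f}=\widetilde{\rho^{-1}}\,\tilde{f}$; Proposition~\ref{C-H estimate} gives $|\phi^k\rho^{-1}D^k\rho|\le C$ and $\rho(y)\sim\rho(x)$ on $B^+_{\phi(x)}(x)$, which together with the product rule imply that $\widetilde{\rho^{-1}}$ has $C^{k,\alpha}(B_1^+(0))$-norm bounded by a multiple of $\rho^{-1}(x)$. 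Hence
$$
(\phi(x))^4\,\phi^r(x)\rho^s(x)\,\|\widetilde{\rho^{-1}f}\|_{C^{k,\alpha}(B_1^+(0))} \le C\,\phi^{r+4}(x)\rho^{s-1}(x)\,\|\tilde{f}\|_{C^{k,\alpha}(B_1^+(0))} = C\,\|f\|_{C^{k,\alpha}_{\phi,\phi^{r+4}\rho^{s-1}}(B^+_{\phi(x)}(x))}
$$
after a second use of Lemma~\ref{scaling lemma}(iii). Assembling these identifications yields (\ref{boundary A-D-N}).

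The main point requiring care is the handling of the negative weight $\rho^{-1}$ inside $\widetilde{\rho^{-1}f}$: because $\rho$ degenerates near $\partial\Omega\setminus\Sigma$, one must confirm that $\rho$ stays comparable to $\rho(x)$ throughout $B^+_{\phi(x)}(x)$, which is precisely the content of Proposition~\ref{C-H estimate}(iii) applied at a boundary point $x\in\Sigma$ (valid because $\rho$ and $\phi$ have been extended to $\Omega\cup\Sigma$ with the same uniform estimates). Beyond this, the argument is strictly parallel to the interior estimate (\ref{interior A-D-N}): one simply replaces each full ball $B_{\phi(x)}(x)$ by the half-ball $B^+_{\phi(x)}(x)$ and invokes the boundary version of ADN in place of its interior counterpart. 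The uniformity of $C'$ across metrics $\mathcal{C}^{k+4,\alpha}$-near $g_0$ follows from the fact that both the ellipticity constant and the scaled coefficient bounds depend continuously on the metric in this topology.
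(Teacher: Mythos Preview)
Your proposal is correct and follows essentially the same route as the paper: apply the ADN boundary Schauder estimate to the rescaled equation (\ref{A-D-N}) on $B_1^+(0)$, multiply through by $\phi^r(x)\rho^s(x)$, and translate each term back via Lemma~\ref{scaling lemma}(iii). The only cosmetic difference is that the paper handles the $\rho^{-1}$ in one step, passing directly from $\|\rho^{-1}f\|_{C^{k,\alpha}_{\phi,\phi^{r+4}\rho^{s}}}$ to $\|f\|_{C^{k,\alpha}_{\phi,\phi^{r+4}\rho^{s-1}}}$ (the content of Proposition~\ref{operator Holder}(iii)), whereas you factor $\widetilde{\rho^{-1}f}=\widetilde{\rho^{-1}}\tilde f$ and bound $\widetilde{\rho^{-1}}$ separately; both amount to the same use of Proposition~\ref{C-H estimate}.
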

\begin{proof}
    From the boundary Schauder estimate \cite{A-D-N} we get
$$
\|\tilde{u}\|_{C^{k+4, \alpha}(B^+_{1/2}(0))} \le C\left(\|(\phi(x))^4\widetilde{\rho^{-1}f}\|_{C^{k, \alpha}(B^+_1(0))} + \| \tilde{u}\|_{L^2(B^+_1(0))}\right).
$$
For $r, s\in\mathbb{R}$, multiplying $\phi^r(x)\rho^s(x)$ to the above inequality, we have
\begin{equation*}
\begin{aligned}
& \|\phi^r(x)\rho^s(x)\tilde{u}\|_{C^{k+4, \alpha}(B^+_{1/2}(0))} \\
& \qquad \le C\left(\|\phi^{r+4}(x)\rho^s(x)\widetilde{\rho^{-1}f}\|_{C^{k, \alpha}(B^+_1(0))} + \| \phi^r(x)\rho^s(x)\tilde{u}\|_{L^2(B^+_1(0))}\right).
\end{aligned}
\end{equation*}
Thus,
\begin{align*}
\begin{aligned}
    \|u\|_{C_{\phi, \phi^r\rho^s}^{k+4, \alpha}(B^+_{\phi(x)/2}(x))} & \le C\left(\|\rho^{-1}f\|_{C_{\phi, \phi^{r+4}\rho^{s}}^{k, \alpha}(B^+_{\phi(x)}(x))} + \|u\|_{L_{\phi^{-n}(\phi^{2r}\rho^{2s})}^2(B^+_{\phi(x)}(x))}\right)\\
    & \le C\left(\|f\|_{C_{\phi, \phi^{r+4}\rho^{s-1}}^{k, \alpha}(B^+_{\phi(x)}(x))} + \|u\|_{L_{\phi^{-n}(\phi^{2r}\rho^{2s})}^2(B^+_{\phi(x)}(x))}\right).
\end{aligned}
\end{align*}
\end{proof}


We may take the supremum over $x\in\Omega$ and $y\in\Sigma$ and combine (\ref{interior A-D-N}) and (\ref{boundary A-D-N}) to get the following global Schauder estimates.
\begin{thm}[Global Schauder estimates]\label{Global Schauder estimates}
For any $k\in\mathbb{N}$ and any $r, s\in\mathbb{R}$, there is a constant $C$ such that
\begin{equation}\label{global Schauder}
   \|u\|_{C_{\phi, \phi^r\rho^s}^{k+4, \alpha}(\Omega\cup\Sigma)} \le C\left(\|f\|_{C_{\phi, \phi^{r+4}\rho^{s-1}}^{k, \alpha}(\Omega\cup\Sigma)} + \|u\|_{L_{\phi^{-n}(\phi^{2r}\rho^{2s})}^2(\Omega)}\right). 
\end{equation}
Here the constant $C$ depends only on $\Omega, \Sigma, g_0, k, n, \rho, \alpha, r, s$, the ellipticity constant of the operator $\rho^{-1}L(\rho L^*u)$, $\|b_\beta\|_{C_{\phi, \phi^{4-|\beta|}}^{k, \alpha}(\Omega\cup\Sigma)}$, and the constant in Proposition \ref{C-H estimate}. In particular, the constant $C$ is uniform for metrics $\mathcal{C}^{k+4, \alpha}$-near $g_0$.
\end{thm}


In particular, if $u_0$ is the \textbf{unique} solution of the Dirichlet problem with zero boundary data (\ref{Dirichlet eqn2 weak}) for $f\in C_{\phi, \phi^{r+4}\rho^{s-1}}^{k, \alpha}(\Omega\cup\Sigma)$, then (\ref{global Schauder}) becomes
\begin{equation}
    \|u_0\|_{C_{\phi, \phi^r\rho^s}^{k+4, \alpha}(\Omega\cup\Sigma)} \le C\|f\|_{C_{\phi, \phi^{r+4}\rho^{s-1}}^{k, \alpha}(\Omega\cup\Sigma)}.
\end{equation}
Applying Proposition~\ref{operator Holder} to this result yields two key consequences:
\begin{equation}\label{Schauder1}
    \|\rho L^*u_0\|_{C_{\phi, \phi^{r+2}\rho^{s-1}}^{k+2, \alpha}(\Omega\cup\Sigma)} \le C\|f\|_{C_{\phi, \phi^{r+4}\rho^{s-1}}^{k, \alpha}(\Omega\cup\Sigma)},
\end{equation}
\begin{equation}\label{Schauder2}
    \|B(\rho L^*u_0)\|_{C_{\phi, \phi^{r+3}\rho^{s-1}}^{k+1, \alpha}(\Sigma)} \le C\|f\|_{C_{\phi, \phi^{r+4}\rho^{s-1}}^{k, \alpha}(\Omega\cup\Sigma)}.
\end{equation}

\subsubsection{Weighted Schauder estimates for the $T$ operator}
To derive the Schauder estimates for the operator $T$, we recall its definition from (\ref{T operator}) and consider the following boundary value problem:
\begin{align*}
\left\{\begin{aligned}
\rho^{-1}L(\rho L^*u) & = 0 \quad &\text{in } &\Omega\\ 
2\rho^{-1}\dot{H}(\rho L^*u) & = \rho^{-1}\psi &\text{on } &\Sigma\\
u_\nu & = 0 &\text{on } &\Sigma.
\end{aligned}\right.
\end{align*}
Like in the Dirichlet problem, let us start with the three conditions required by \cite{A-D-N}*{Section 7}. 

\textit{1. Condition on the interior operator $\rho^{-1}L(\rho L^*u)$:} Same as the Dirichlet problem.

\textit{2. Complementing condition:}

The principal symbols of the boundary operators are:
$$
B_1' = -n\sum_{i=1}^{n-1} \partial_i^2\partial_\nu + (1-n) \partial_\nu^3,\quad\text{ and }\quad B_2' = \partial_\nu.
$$
$B_1'(x; \xi+\tau\nu)$ mod $(\tau - \sqrt{-1})^2$ is
\begin{align*}
    B_1'(x; \xi+\tau\nu) & = -n\sum_{i=1}^{n-1} (\xi+\tau\nu)_i^2(\xi+\tau\nu)_n + (1-n) (\xi+\tau\nu)_n^3\\
    & = -n\sum_{i=1}^{n-1} \xi_i^2\tau + (1-n) \tau^3\\
    & = -n\tau + (1-n) (2\sqrt{-1}-3\tau)\\
    & = 2(n-1)(\tau - \sqrt{-1})-\tau,
\end{align*}
while $B_2'(x; \xi+\tau\nu)$ mod $(\tau - \sqrt{-1})^2$ is $\tau$. 

Thus $B_1', B_2'$ are linearly independent, and the \textit{Complementing condition} is satisfied.

\textit{3. Boundedness of the coefficients:} 

We may rewrite the boundary operator as
$$
2\rho^{-1}\dot{H}(\rho L^*u) = \sum_{|\gamma|=0}^3 c_\gamma\partial^\gamma u
$$
where $c_\gamma$ is a degree one polynomial of $\rho^{-1} D^l \rho \,\,\, (l = 0, 1; \, l+|\gamma|\le 3)$ with coefficients depending locally uniformly on $g_0 \in C^{k+4, \alpha}(\Omega\cup\Sigma)$. By Lemma \ref{scaling lemma} and remark, we have the following estimate.

\begin{lem}
    The coefficients $c_\gamma$ satisfy
    $$\|c_\gamma\|_{C_{\phi, \phi^{3-|\gamma|}}^{k+1, \alpha}(\Sigma)} \le C,$$
    where the constant $C$ depends locally uniformly on $g_0 \in \mathcal{C}^{k+4, \alpha}(\Omega\cup\Sigma)$.
\end{lem}

For $x\in\Sigma$, in each $F^{-1}_x\left(B^+_{\phi(x)}(x)\right)$,
$$
    \widetilde{\rho^{-1}\psi} = \sum_{|\gamma|=0}^3 \widetilde{c_\gamma}\widetilde{\partial^\gamma u} = \sum_{|\gamma|=0}^3 \widetilde{c_\gamma}(\phi(x))^{-|\gamma|}\p^\gamma_z \tilde{u}.
$$
Multiplying $(\phi(x))^3$ to the above identity, we have
\begin{equation*}
    (\phi(x))^3\widetilde{\rho^{-1}\psi} = \sum_{|\gamma|=0}^3 \left((\phi(x))^{3-|\gamma|}\widetilde{c_\gamma}\right)\p^\gamma_z\tilde{u}.
\end{equation*}
Here the power of $\phi$ is specifically chosen such that each coefficient $(\phi(x))^{3-|\gamma|}\widetilde{c_\gamma}$ is bounded in $C^{k+1, \alpha}(B_1^+(0))$:
$$\|(\phi(x))^{3-|\gamma|}\widetilde{c_\gamma}\|_{C^{k+1, \alpha}(\Sigma_0)} = \|c_\gamma\|_{C_{\phi, \phi^{3-|\gamma|}}^{k+1, \alpha}(B_{\phi(x)}(x)\cap\Sigma)} \le C.$$

Thus for the following re-scaled equations, the three conditions are all satisfied:
\begin{align}\label{A-D-N2}
\left\{\begin{aligned}
 \sum_{|\beta|=0}^4 \left((\phi(x))^{4-|\beta|}\widetilde{b_\beta}\right)\p^\beta_z\tilde{u} & = 0 \quad &\text{in } &B^+_1(0)\\ 
 \sum_{|\gamma|=0}^3 \left((\phi(x))^{3-|\gamma|}\widetilde{c_\gamma}\right)\p^\gamma_z\tilde{u} & = (\phi(x))^3\widetilde{\rho^{-1}\psi} &\text{on } &\Sigma_0\\
\p_n \tilde{u} & = 0 &\text{on } &\Sigma_0.
\end{aligned}\right.
\end{align}

We apply the boundary Schauder estimate \cite{A-D-N} to the equation (\ref{A-D-N2}) and obtain
\begin{lem}[Boundary Schauder estimates 2]
For any $k\in\mathbb{N}$ and any $r, s\in\mathbb{R}$, there is a constant $C$ such that for any $x\in\Sigma$,
\begin{equation}\label{boundary A-D-N 2}
\|u\|_{C_{\phi, \phi^r\rho^s}^{k+4, \alpha}(B^+_{\phi(x)/2}(x))} \le C\left(\|\psi\|_{C_{\phi, \phi^{r+3}\rho^{s-1}}^{k+1, \alpha}(B_{\phi(x)}(x)\cap\Sigma)} + \|u\|_{L_{\phi^{-n}(\phi^{2r}\rho^{2s})}^2(B^+_{\phi(x)}(x))}\right).
\end{equation}
Here the constant $C$ depends only on $\Omega, \Sigma, g_0, k, n, \rho, \alpha, r, s$, the ellipticity constant of the operator $\rho^{-1}L(\rho L^*u)$, $\|b_\beta\|_{C_{\phi, \phi^{4-|\beta|}}^{k, \alpha}(\Omega\cup\Sigma)}$, $\|c_\gamma\|_{C_{\phi, \phi^{3-|\gamma|}}^{k+1, \alpha}(\Sigma)}$, and the constant in Proposition \ref{C-H estimate}. In particular, the constant $C$ is uniform for metrics $\mathcal{C}^{k+4, \alpha}$-near $g_0$.
\end{lem}

Now we may take the supremum over $x\in\Omega$ and $y\in\Sigma$ and combine (\ref{interior A-D-N}) and (\ref{boundary A-D-N 2}) to get the following global Schauder estimates.
\begin{thm}[Global Schauder estimates 2]\label{Global Schauder2}
For any $k\in\mathbb{N}$ and any $r, s\in\mathbb{R}$, there is a constant $C$ as above such that
\begin{equation}\label{global Schauder 2}
   \|u\|_{C_{\phi, \phi^r\rho^s}^{k+4, \alpha}(\Omega\cup\Sigma)} \le C\left(\|\psi\|_{C_{\phi, \phi^{r+3}\rho^{s-1}}^{k+1, \alpha}(\Sigma)} + \|u\|_{L_{\phi^{-n}(\phi^{2r}\rho^{2s})}^2(\Omega)}\right). 
\end{equation}
\end{thm}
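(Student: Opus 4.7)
The plan is to obtain (\ref{global Schauder 2}) by combining the interior estimate (\ref{interior A-D-N}) and the boundary estimate (\ref{boundary A-D-N 2}) pointwise, and then taking the supremum over $x \in \Omega\cup\Sigma$. Set $\varphi = \phi^r\rho^s$. By definition, the weighted Hölder norm on the left-hand side is $\sup_x$ of a local quantity on $B^+_{\phi(x)}(x)$. By Proposition \ref{C-H estimate}(iii), $\phi$ and $\rho$ — hence $\varphi$ — are comparable on $B_{\phi(x)}(x)$ with constants uniform in $g$. This implies that this norm is equivalent, up to a constant depending only on the constants from Proposition \ref{C-H estimate}, to the same sup taken over the smaller ball $B^+_{\phi(x)/2}(x)$: given $y \in B^+_{\phi(x)}(x)$, setting $x' = y$ one has $y \in B^+_{\phi(x')/2}(x')$ trivially with $\varphi(x') \sim \varphi(x)$ and $\phi(x') \sim \phi(x)$, and the Hölder seminorm piece is handled identically.

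First, for $x$ with $B_{\phi(x)}(x)\cap\Sigma = \varnothing$, I would apply the interior estimate (\ref{interior A-D-N}) directly; no boundary data appears because the ball does not meet $\Sigma$. Second, for $x \in \Sigma$ the boundary estimate (\ref{boundary A-D-N 2}) applies verbatim. Third, for the transitional case where $x \in \Omega$ is close enough to $\Sigma$ that its ball meets $\Sigma$, I would fix a nearest point $x_0 \in \Sigma$ (note $d_g(x, x_0) < \phi(x)$, so $x \in B^+_{\phi(x_0)}(x_0)$ up to enlarging the ball by a factor bounded by the comparability constant) and apply (\ref{boundary A-D-N 2}) at $x_0$; the weight comparability $\phi(x_0) \sim \phi(x)$, $\rho(x_0) \sim \rho(x)$ absorbs the switch from $x$ to $x_0$ into the final constant.

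In each case, the right-hand side of the local estimate at $x$ (or $x_0$) is bounded by the corresponding global weighted norm of $f$, $\psi$, and by the global weighted $L^2$-norm of $u$, since those norms are themselves defined as suprema of the same local quantities on $B^+_{\phi(x)}(x)$. Taking the supremum over $x \in \Omega\cup\Sigma$ of the pointwise bounds then yields (\ref{global Schauder 2}). The uniformity of $C$ for metrics near $g_0$ in $C^{k+4,\alpha}$ is inherited from the uniformity in the local interior/boundary estimates and in Proposition \ref{C-H estimate}.

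The main (mild) obstacle is the transitional case: one must check that replacing the center $x \in \Omega$ of a ball meeting $\Sigma$ by a nearby $x_0 \in \Sigma$ does not degrade the estimate. This reduces to the comparability of $\phi$ and $\rho$ in Proposition \ref{C-H estimate}(iii), which gives constants independent of $\epsilon$-neighborhoods and of $g$ in the $C^{k+4,\alpha}$-neighborhood of $g_0$. No new analytic input beyond the two already-proved local Schauder estimates is required; the result is essentially bookkeeping dictated by the definition of the weighted norm $\|\cdot\|_{C^{k+4,\alpha}_{\phi,\phi^r\rho^s}(\Omega\cup\Sigma)}$.
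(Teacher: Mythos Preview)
Your proposal is correct and follows the same approach as the paper. The paper's own argument is a single sentence—``take the supremum over $x\in\Omega$ and $y\in\Sigma$ and combine (\ref{interior A-D-N}) and (\ref{boundary A-D-N 2})''—and you have supplied the standard covering details (the transitional case near $\Sigma$, and the comparability of $\phi,\rho$ from Proposition~\ref{C-H estimate}(iii)) that the paper leaves implicit. One small wording slip: the weighted $L^2$ term on the right is not a supremum of local pieces but an integral, so the local $L^2$ norm over $B^+_{\phi(x)}(x)$ is dominated by the global one simply by monotonicity of the integral, not by the sup argument you use for the H\"older terms.
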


To complete the Schauder estimates, we set $r=\frac{n}2$ and $s=\frac12$. This yields the following estimate
\begin{equation*}
   \|u\|_{C_{\phi, \phi^{\frac{n}2}\rho^{\frac12}}^{k+4, \alpha}(\Omega\cup\Sigma)} \le C\left(\|\psi\|_{C_{\phi, \phi^{\frac{n}2+3}\rho^{-\frac12}}^{k+1, \alpha}(\Sigma)} + \|u\|_{L_{\rho}^2(\Omega)}\right). 
\end{equation*}
We now combine this with the estimate from $(\ref{weak})$ for weak solutions to control the full $\mathcal{B}_4(\Omega)$ norm of $u$.
\begin{equation*}
\begin{aligned}
    \|u\|_{\mathcal{B}_4(\Omega)} & = \|u\|_{C_{\phi, \phi^{\frac{n}2}\rho^{\frac12}}^{k+4, \alpha}(\Omega\cup\Sigma)} + \|u\|_{H^2_{\rho}(\Omega)}\\
    & \le C\left(\|\psi\|_{C_{\phi, \phi^{\frac{n}2+3}\rho^{-\frac12}}^{k+1, \alpha}(\Sigma)} + \|u\|_{L_{\rho}^2(\Omega)}\right) + \|u\|_{H^2_{\rho}(\Omega)}\\
    & \le C\left(\|\psi\|_{C_{\phi, \phi^{\frac{n}2+3}\rho^{-\frac12}}^{k+1, \alpha}(\Sigma)} + \|\psi\|_{\mathcal{D}^*}\right)\\
    & =C\|\psi\|_{\mathcal{B}_1(\Sigma)}. 
\end{aligned}
\end{equation*}
Then from Proposition~\ref{operator Holder}, we immediately get
\begin{equation*}
    \|\rho L^*u\|_{\mathcal{B}_2(\Omega)} \le C\|\psi\|_{\mathcal{B}_1(\Sigma)}. 
\end{equation*}

In other words, when restricted to the space $\hat{B}(\mathcal{D})\cap C_{\phi, \phi^{\frac{n}2+3}\rho^{-\frac12}}^{k+1, \alpha}(\Sigma)\subset\mathcal{B}_1(\Sigma)$, the operator $T$ has the following estimate:
\begin{equation*}
    \|T\psi\|_{\mathcal{B}_2(\Omega)} = \|\rho L^*u\|_{\mathcal{B}_2(\Omega)} \le C\|\psi\|_{\mathcal{B}_1(\Sigma)}.  
\end{equation*}

\bigskip
Finally, we extend this result to the entire space $\mathcal{B}_1(\Sigma)$. By Theorem~\ref{basis}, the complement 
$$W\cap C_{\phi, \phi^{\frac{n}2+3}\rho^{-\frac12}}^{k+1, \alpha}(\Sigma) = W \subset \mathcal{B}_1(\Sigma)$$ 
is finite-dimensional, which ensures the above estimate holds universally for a large constant $C$. This yields our final desired bound:
\begin{equation}\label{Schauder3}
     \|T\psi\|_{\mathcal{B}_2(\Omega)} \le C\|\psi\|_{\mathcal{B}_1(\Sigma)}\quad\text{ for any } \psi\in\mathcal{B}_1(\Sigma).
\end{equation}

We are now ready to prove the main result of this section.
\begin{proof}[Proof of Theorem \ref{weighted Schauder}]
Together with (\ref{soln regularity}), (\ref{Schauder1}), (\ref{Schauder2}), and (\ref{Schauder3}), we get
\begin{equation*}
\begin{aligned}
     \|a\|_{\mathcal{B}_2(\Omega)} & \le \|\rho L^*u_0\|_{\mathcal{B}_2(\Omega)} + \|T\left(\psi - B(\rho L^*u_0)\right)\|_{\mathcal{B}_2(\Omega)}\\
    & \le C\left(\|f\|_{\mathcal{B}_0(\Omega)} + \|\psi - B(\rho L^*u_0)\|_{\mathcal{B}_1(\Sigma)}\right)\\
    & \le C\left(\|f\|_{\mathcal{B}_0(\Omega)} + \|\psi\|_{\mathcal{B}_1(\Sigma)} + \|f\|_{\mathcal{B}_0(\Omega)}\right)\\
    & \le C\left(\|f\|_{\mathcal{B}_0(\Omega)} + \|\psi\|_{\mathcal{B}_1(\Sigma)}\right).
\end{aligned}
\end{equation*}
\end{proof}

\subsection{The nonlinear problem and iteration}\label{sec4.7}
We now use Picard's iteration to get a solution for the nonlinear problem. Given an initial metric $g_0$, 
\begin{itemize}
\item a function $R'$ close to $R(g_0)$ and equal to $R(g_0)$ outside $\Omega$,
\item a function $H'$ close to $H(g_0)$ on $\Sigma$ and equal to $H(g_0)$ outside $\Sigma$ (on $\partial M$),
\end{itemize}
we would like to find a tensor $a\in\mathcal{S}^{(0,2)}$ with $R(g_0+a) = R', H(g_0+a) = H'$ and $a \equiv 0$ outside $\Omega\cup\Sigma$, assuming $\Phi_{g_0}^*$ has trivial kernel.

Recall $L_{g_0}$ is the linearization of the scalar curvature at $g_0$, and $\dot{H}_{g_0} = \frac12B_{g_0}$ is the linearization of the mean curvature of $\Sigma$ at $g_0$. Consider the Taylor expansion of the scalar curvature and the mean curvature at $g_0$, where $(Q_{g_0,1}(a), Q_{g_0,2}(a))^t$ is the remainder term:
$$\left(
\begin{matrix} 
R(g_0+a)\\
H(g_0+a)
\end{matrix}
\right) = \left(
\begin{matrix} 
R(g_0) + L_{g_0}(a)\\
H(g_0) + \frac12B_{g_0}(a)
\end{matrix}
\right) + \left(
\begin{matrix} 
Q_{g_0,1}(a)\\
Q_{g_0,2}(a)
\end{matrix}
\right).$$
For a $C^{k+4,\alpha}$-metric $g_0$, it's easy to see that 
$$
\|Q_{g_0,1}(a)\|_{k,\alpha(\Omega)} + \|Q_{g_0,2}(a)\|_{k+1,\alpha(\Sigma)}\le C\|a\|^2_{k+2,\alpha}
$$
where $C$ is a constant uniform for metrics near $g_0$ in $C^{k+4,\alpha}$. In fact, we have the following estimates involving the weights, which are similar to Corvino's \cite{C} and results in some follow up works \cites{C-E-M, C-H, C-S} (with refined weighted spaces).
\begin{prop}\label{base}
Let $g_0$ be a $\mathcal{C}^{k+4, \alpha}$-metric such that $\operatorname{ker}\Phi_{g_0}^*=\{0\}$. Then there is a constant $C$ (as in Theorem \ref{weighted Schauder}) uniform for metrics near $g_0$ in $\mathcal{C}^{k+4, \alpha}$, and an  $\epsilon> 0$ (sufficiently small) such that if
\begin{itemize}
\item[1.] $R'\in C^{k, \alpha}(\Omega\cup\Sigma)$ with $(R' - R(g_0))\in \mathcal{B}_{0}(\Omega)$;
\item[2.] $H'\in C^{k+1, \alpha}(\Sigma)$ with $(H' - H(g_0))\in \mathcal{B}_{1}(\Sigma)$;
\item[3.] and $\|R' - R(g_0)\|_{\mathcal{B}_{0}(\Omega)} + \|H' - H(g_0)\|_{\mathcal{B}_{1}(\Sigma)} < \epsilon,$
\end{itemize}
then upon solving $\Psi_{g_0}(a_0) = (R' - R(g_0), 2(H' - H(g_0)))$ via the previous method (given by (\ref{weak solution})), and letting $g_1 = g_0 + a_0$, we have $$\left\|a_{0}\right\|_{\mathcal{B}_2(\Omega)} \leq C(\|R' - R(g_0)\|_{\mathcal{B}_{0}(\Omega)} + \|H' - H(g_0)\|_{\mathcal{B}_{1}(\Sigma)})<C \epsilon$$ 
and 
\begin{equation*}
\begin{aligned}
\|R' - R(g_1)\|_{\mathcal{B}_{0}(\Omega)} & + \|H' - H(g_1)\|_{\mathcal{B}_{1}(\Sigma)} \\
& \leq C(\|R' - R(g_0)\|_{\mathcal{B}_{0}(\Omega)} + \|H' - H(g_0)\|_{\mathcal{B}_{1}(\Sigma)})^{2}<C \epsilon^{2}.
\end{aligned}
\end{equation*}
Moreover, the metric $g_1$ is $\mathcal{C}^{k+2,\alpha}$.
\end{prop}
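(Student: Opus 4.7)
The idea is to apply the linear theory just developed (Theorem \ref{non-homogeneous BVP} together with Theorem \ref{weighted Schauder}) to produce $a_0$, and then Taylor-expand at $g_0$ to reduce the second estimate to a nonlinear pointwise bound on the Taylor remainders in the weighted spaces.

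First, set $f:=S-R(g_0)\in\mathcal{B}_0(\Omega)$ and $\psi:=2(H'-H(g_0))\in\mathcal{B}_1(\Sigma)$. Theorem \ref{non-homogeneous BVP} produces a weak solution $a_0$ of $\Phi_{g_0}(a_0)=(f,\psi)$ given explicitly by formula (\ref{weak solution}), and Theorem \ref{weighted Schauder} then promotes $a_0$ to $\mathcal{B}_2(\Omega)$ with $\|a_0\|_{\mathcal{B}_2(\Omega)}\le C(\|f\|_{\mathcal{B}_0(\Omega)}+\|\psi\|_{\mathcal{B}_1(\Sigma)})<C\epsilon$, which is the first asserted inequality. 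The $\mathcal{B}_2$-membership embeds $a_0$ into $C^{2,\alpha}(\Omega\cup\Sigma)$; after shrinking $\epsilon$ if needed so that $g_0+a_0$ remains positive-definite, $g_1:=g_0+a_0$ is a $C^{2,\alpha}$ metric, proving the regularity claim.

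For the second estimate, Taylor-expand $R$ and $H$ at $g_0$:
\begin{align*}
R(g_1) &= R(g_0) + L_{g_0}(a_0) + Q_{g_0,1}(a_0),\\
H(g_1) &= H(g_0) + \tfrac{1}{2}B_{g_0}(a_0) + Q_{g_0,2}(a_0).
\end{align*}
The identity $\Phi_{g_0}(a_0)=(f,\psi)$ means $L_{g_0}(a_0)=S-R(g_0)$ and $\tfrac12 B_{g_0}(a_0)=H'-H(g_0)$, whence $S-R(g_1)=-Q_{g_0,1}(a_0)$ and $H'-H(g_1)=-Q_{g_0,2}(a_0)$. The second estimate therefore reduces to the quadratic bound $\|Q_{g_0,1}(a_0)\|_{\mathcal{B}_0(\Omega)}+\|Q_{g_0,2}(a_0)\|_{\mathcal{B}_1(\Sigma)}\le C\|a_0\|_{\mathcal{B}_2(\Omega)}^2$. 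Schematically, $Q_{g_0,1}(a)$ is a sum of terms of the form $a\cdot D^2 a$, $Da\cdot Da$, and $a\cdot a$ with smooth coefficients depending on $g_0$ (with bounded denominators once $\epsilon$ is small). The definition of $\mathcal{B}_2$ yields the pointwise bound $\|D^j a_0\|_{C^0(B^+_{\phi(x)}(x))}\le \|a_0\|_{\mathcal{B}_2}\,\phi^{-j-2-n/2}(x)\rho^{1/2}(x)$ for $j=0,1,2$, so any quadratic product of derivatives of order at most two is dominated pointwise by $C\|a_0\|_{\mathcal{B}_2}^2\,\phi^{-4-n}(x)\rho(x)$. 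Multiplying by the $\mathcal{B}_0$ weight $\phi^{4+n/2}\rho^{-1/2}$ leaves the leftover factor $\phi^{-n/2}\rho^{1/2}$, which is bounded (in fact decays to $0$ at $\p\Omega\setminus\Sigma$) since $\rho=(\tilde\rho\circ d)^N$ decays exponentially in $d$ while $\phi=d^2$ is polynomial. The Hölder seminorms and the parallel estimate for $Q_{g_0,2}$ on $\Sigma$ follow from the multiplication rules in Proposition \ref{operator Holder}; the dual-norm components of $\|\cdot\|_{\mathcal{B}_0}$ and $\|\cdot\|_{\mathcal{B}_1}$ are controlled by combining the pointwise bound with a Cauchy--Schwarz pairing against test functions in $H^2_\rho$ (respectively in $\mathcal{D}$), invoking Lemma \ref{density lemma} for approximation.

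\textbf{Expected main obstacle.} The subtle point is precisely this weighted quadratic estimate: in purely polynomial weights, squaring up to two derivatives of $a_0$ produces $\phi^{-4-n}$, which exceeds the $\phi^{4+n/2}$ factor supplied by $\mathcal{B}_0$ by a net $\phi^{-n/2}$. Only the exponential weight $\rho$, through $\rho^{1/2}$ dominating every polynomial power of $\phi^{-1}$ near $\p\Omega\setminus\Sigma$, closes the estimate. This is the entire reason the exponential-type weights of Corvino--Huang were built into the definitions of the $\mathcal{B}_k$; once this bookkeeping is verified, the rest of the argument is a direct composition of the earlier linear results.
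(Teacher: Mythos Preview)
Your proposal is correct and follows essentially the same approach as the paper: apply Theorem \ref{weighted Schauder} for the first bound, Taylor-expand to reduce the second bound to the quadratic remainder estimate $\|Q_{g_0,1}(a_0)\|_{\mathcal{B}_0}+\|Q_{g_0,2}(a_0)\|_{\mathcal{B}_1}\le C\|a_0\|_{\mathcal{B}_2}^2$, and invoke the exponential weight to close it. The paper's proof is in fact terser than yours---it simply asserts the weighted quadratic estimate by reference to \cite{C} and the follow-up works \cite{C-E-M}, \cite{C-H}, \cite{C-S}---whereas you correctly identify and explain the key mechanism (the exponential $\rho^{1/2}$ absorbing the leftover $\phi^{-n/2}$), which is exactly the point.
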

\begin{proof}
    The first inequality is a direct consequence of Theorem \ref{weighted Schauder}. If $g_1 = g_0 + a_0$, we have
    $$\left(
\begin{matrix} 
R(g_1)\\
H(g_1)
\end{matrix}
\right) = \left(
\begin{matrix} 
R(g_0) + L_{g_0}(a_0)\\
H(g_0) + \frac12B_{g_0}(a_0)
\end{matrix}
\right) + \left(
\begin{matrix} 
Q_{g_0,1}(a_0)\\
Q_{g_0,2}(a_0)
\end{matrix}
\right)= \left(
\begin{matrix} 
R'\\
H'
\end{matrix}
\right) + \left(
\begin{matrix} 
Q_{g_0,1}(a_0)\\
Q_{g_0,2}(a_0)
\end{matrix}
\right).$$
So
\begin{equation*}
\begin{aligned}
    \|R' - R(g_1)\|_{\mathcal{B}_{0}(\Omega)} & + \|H' - H(g_1)\|_{\mathcal{B}_{1}(\Sigma)} \leq C \left\|a_{0}\right\|^2_{\mathcal{B}_2(\Omega)}\\
    & \le C(\|R' - R(g_0)\|_{\mathcal{B}_{0}(\Omega)} + \|H' - H(g_0)\|_{\mathcal{B}_{1}(\Sigma)})^{2}<C \epsilon^{2}.
\end{aligned}
\end{equation*}
\end{proof}

We then proceed recursively as in the following theorem. Note that we only linearize about $g_0$.

\begin{thm}
Let $g_0, C$ and $\epsilon$ be as above. Then there is a constant $\delta \in (0,1)$ such that the following holds:\\
Suppose that $m \ge 1$ and we have recursively constructed $a_0, \dots, a_{m-1}\in\mathcal{B}_2(\Omega)$ and that $g_0, \dots, g_{m}$ are $\mathcal{C}^{k+2, \alpha}$-metrics, where $a_j$ is a solution of $\Psi_{g_0}(a_j) = (R' - R(g_j), 2(H' - H(g_j)))$ given by (\ref{weak solution}) and $g_j = g_{j-1} + a_{j-1}$. Assume $\|R' - R(g_0)\|_{\mathcal{B}_{0}(\Omega)} + \|H' - H(g_0)\|_{\mathcal{B}_{1}(\Sigma)} < \epsilon$ and that for all $0\le j\le m-1$,
$$\left\|a_{j}\right\|_{\mathcal{B}_2(\Omega)} \leq C(\|R' - R(g_0)\|_{\mathcal{B}_{0}(\Omega)} + \|H' - H(g_0)\|_{\mathcal{B}_{1}(\Sigma)})^{(1+j\delta)},$$
and for all $0\le j\le m$,
\begin{equation*}
\begin{aligned}
\|R' - R(g_j)\|_{\mathcal{B}_{0}(\Omega)} & + \|H' - H(g_j)\|_{\mathcal{B}_{1}(\Sigma)} \\
& \leq (\|R' - R(g_0)\|_{\mathcal{B}_{0}(\Omega)} + \|H' - H(g_0)\|_{\mathcal{B}_{1}(\Sigma)})^{(1+j\delta)}.
\end{aligned}
\end{equation*}
Then the iteration can proceed to the next step and the above inequalities persist for $j = m$.
\end{thm}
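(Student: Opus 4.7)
The goal is to verify the two inductive bounds at step $m$. First, to construct $a_m$, apply Theorem \ref{weighted Schauder} to the linearized problem at $g_0$
$$\Phi_{g_0}(a_m) = \bigl(S - R(g_m),\; 2(H' - H(g_m))\bigr).$$
The inductive hypothesis at $j=m$ guarantees that the right-hand side lies in $\mathcal{B}_0(\Omega)\times\mathcal{B}_1(\Sigma)$, and Theorem \ref{weighted Schauder} yields a weak solution $a_m \in \mathcal{B}_2(\Omega)$ satisfying
$$\|a_m\|_{\mathcal{B}_2(\Omega)} \le C\bigl(\|S - R(g_m)\|_{\mathcal{B}_0(\Omega)} + \|H' - H(g_m)\|_{\mathcal{B}_1(\Sigma)}\bigr) \le C\,e_0^{1+m\delta},$$
where $e_0 := \|S - R(g_0)\|_{\mathcal{B}_0(\Omega)} + \|H' - H(g_0)\|_{\mathcal{B}_1(\Sigma)} < \epsilon$. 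Setting $g_{m+1} = g_m + a_m$, the cumulative perturbation $b_{m+1} := g_{m+1}-g_0 = \sum_{j=0}^m a_j$ is bounded by a geometric series $\sum_j C\,e_0^{1+j\delta} \le C'e_0$, which is small for small $\epsilon$; in particular $g_{m+1}$ is a well-defined $C^{2,\alpha}$ metric close to $g_0$.

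The key step is a telescoping computation for the new error. Since we linearize at $g_0$ (not at $g_m$), the Taylor expansion gives
$$R(g_{m+1}) = R(g_0) + L_{g_0}(b_{m+1}) + Q_{g_0,1}(b_{m+1}),\qquad H(g_{m+1}) = H(g_0) + \tfrac12 B_{g_0}(b_{m+1}) + Q_{g_0,2}(b_{m+1}),$$
and analogously at $g_m$ with $b_m := \sum_{j=0}^{m-1} a_j$. Subtracting, and using that $a_m$ is constructed to solve $L_{g_0}(a_m) = S - R(g_m)$ and $\tfrac12 B_{g_0}(a_m) = H'-H(g_m)$, the linear terms cancel cleanly and one obtains
$$S - R(g_{m+1}) = Q_{g_0,1}(b_m) - Q_{g_0,1}(b_{m+1}),\qquad H' - H(g_{m+1}) = Q_{g_0,2}(b_m) - Q_{g_0,2}(b_{m+1}).$$

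The next step is a weighted quadratic remainder estimate. Writing $Q_{g_0,i}(b_{m+1}) - Q_{g_0,i}(b_m)$ as an integral of $DQ_{g_0,i}$ along the segment from $b_m$ to $b_{m+1}=b_m+a_m$, and using that $DQ_{g_0,i}$ vanishes at $0$, together with the multiplication property in Proposition \ref{operator Holder} (which is why the weight exponents in $\mathcal{B}_0,\mathcal{B}_1,\mathcal{B}_2$ were chosen as they were), one gets
$$\|Q_{g_0,i}(b_m) - Q_{g_0,i}(b_{m+1})\|_{\mathcal{B}_0\times\mathcal{B}_1} \le C\bigl(\|b_m\|_{\mathcal{B}_2}+\|b_{m+1}\|_{\mathcal{B}_2}\bigr)\|a_m\|_{\mathcal{B}_2} \le C\,e_0\cdot e_0^{1+m\delta} = C\,e_0^{2+m\delta}.$$
For the induction to close we need $C\,e_0^{2+m\delta} \le e_0^{1+(m+1)\delta}$, i.e.\ $C\,e_0^{1-\delta} \le 1$. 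Fixing any $\delta\in(0,1)$ (say $\delta=\tfrac12$) and shrinking $\epsilon$ so that $C\epsilon^{1-\delta}\le 1$ delivers the desired bound at step $m$.

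The main obstacle is the weighted quadratic estimate on $Q_{g_0,i}$. One must check that the nonlinear remainders of the scalar and mean curvature operators, which involve products and derivatives of the tensor $a$ together with factors of $g_0$, actually map $\mathcal{B}_2(\Omega)$ into $\mathcal{B}_0(\Omega)\times\mathcal{B}_1(\Sigma)$ with genuinely quadratic control; the exponents of $\phi$ and $\rho$ in the definitions of $\mathcal{B}_0,\mathcal{B}_1,\mathcal{B}_2$ are calibrated precisely so that products land in the right space (two copies of the $\mathcal{B}_2$-weight combine to give the $\mathcal{B}_0$/$\mathcal{B}_1$-weight up to the loss of two derivatives), but verifying this requires carefully tracking how each derivative and each factor of the metric contributes to the weight, on both the interior and boundary pieces.
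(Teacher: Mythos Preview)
Your argument is correct and runs along the same Picard-iteration template as the paper, but with a slightly different bookkeeping of the error term. The paper Taylor-expands $R(g_{m+1})$ and $H(g_{m+1})$ at the \emph{moving} base point $g_m$, so that the new error becomes $(L_{g_m}-L_{g_0})(a_m)+Q_{g_m}(a_m)$ (and similarly for $B$); it then telescopes $L_{g_m}-L_{g_0}=\sum_{j=0}^{m-1}(L_{g_{j+1}}-L_{g_j})$ and bounds each summand by $C\|a_j\|_{\mathcal{B}_2}\|a_m\|_{\mathcal{B}_2}$, arriving at $C'\bigl(\|a_m\|^2+\|a_m\|\sum_{j<m}\|a_j\|\bigr)$ before summing the geometric series and choosing $\epsilon$ so that $2C'C^2\epsilon^{1-\delta}(1-\epsilon^\delta)^{-1}\le1$. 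You instead expand both $g_m$ and $g_{m+1}$ at the \emph{fixed} base point $g_0$, which makes the linear parts cancel exactly and leaves only the difference $Q_{g_0}(b_m)-Q_{g_0}(b_{m+1})$; your mean-value bound $C(\|b_m\|+\|b_{m+1}\|)\|a_m\|$ is, after noting $\|b_k\|\le\sum_{j<k}\|a_j\|$, the same quantity the paper obtains. Both routes rest on the identical analytic input---the weighted quadratic/Lipschitz control of the remainder, which you correctly isolate as the one nontrivial point and which the paper likewise takes from \cite{C, C-E-M, C-H, C-S} rather than reproving. Your version is marginally cleaner in that it avoids the intermediate operator telescoping; the paper's version has the minor advantage that the quadratic estimate is only needed for the small increment $a_m$ rather than for the accumulated $b_m$, but since $b_m$ stays uniformly small this makes no difference.
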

\begin{proof}
    Proposition \ref{base} gives the base case. Then by the induction hypothesis, we know the metrics $g_j$'s are near $g_0$, so the Taylor remainder constant $C$ can be taken independent of $m$. (\ref{weak solution}) gives a solution $a_m$ of 
    $$\Psi_{g_0}(a_m) = \left(
\begin{matrix} 
L_{g_0}(a_m)\\
B_{g_0}(a_m)
\end{matrix}
\right) = \left(
\begin{matrix} 
R' - R(g_m)\\
2(H' - H(g_m))
\end{matrix}
\right)$$
where $g_m = g_{m-1} + a_{m-1}$, and by Theorem \ref{weighted Schauder}, 
    \begin{align*}
    \|a_m\|_{\mathcal{B}_{2}(\Omega)} & \le C(\|R' - R(g_m)\|_{\mathcal{B}_{0}(\Omega)} + \|H' - H(g_m)\|_{\mathcal{B}_{1}(\Sigma)})\\
    & \le C(\|R' - R(g_0)\|_{\mathcal{B}_{0}(\Omega)} + \|H' - H(g_0)\|_{\mathcal{B}_{1}(\Sigma)})^{(1+m\delta)}.
\end{align*}
Then note by Taylor expansion,
\begin{align*}
\left(
\begin{matrix} 
R(g_{m+1})\\
H(g_{m+1})
\end{matrix}
\right) & = \left(
\begin{matrix} 
R(g_m) + L_{g_m}(a_m)\\
H(g_m) + \frac12B_{g_m}(a_m)
\end{matrix}
\right) + \left(
\begin{matrix} 
Q_{g_m,1}(a_m)\\
Q_{g_m,2}(a_m)
\end{matrix}
\right)\\
& = \left(
\begin{matrix} 
R'\\
H'
\end{matrix}
\right) + \sum^{m-1}_{j=0}\left(
\begin{matrix} 
L_{g_{j+1}}(a_m) - L_{g_j}(a_m)\\
\frac12B_{g_{j+1}}(a_m) - \frac12B_{g_j}(a_m)
\end{matrix}
\right)+ \left(
\begin{matrix} 
Q_{g_m,1}(a_m)\\
Q_{g_m,2}(a_m)
\end{matrix}
\right).
\end{align*}
So we have
\begin{align*}
& \|R' - R(g_{m+1})\|_{\mathcal{B}_{0}(\Omega)} + \|H' - H(g_{m+1})\|_{\mathcal{B}_{1}(\Sigma)}\\ 
& \leq C'\left(\left\|a_{m}\right\|^2_{\mathcal{B}_2(\Omega)} + \left\|a_{m}\right\|_{\mathcal{B}_2(\Omega)}\sum^{m-1}_{j=0}\left(\|L_{g_{j+1}} - L_{g_j}\| + \frac12\|B_{g_{j+1}} - B_{g_j}\|\right)\right)\\ 
& \leq C'\left(\left\|a_{m}\right\|^2_{\mathcal{B}_2(\Omega)} + \left\|a_{m}\right\|_{\mathcal{B}_2(\Omega)}\sum^{m-1}_{j=0}\left\|a_{j}\right\|_{\mathcal{B}_2(\Omega)}\right)\\ 
& \leq C'C^2\left(\|(R' - R(g_0), H' - H(g_0))\|_{\mathcal{B}_{0}(\Omega)\times\mathcal{B}_{1}(\Sigma)}^{(2+2m\delta)}\right.\\
& \quad \left.+ \|(R' - R(g_0), H' - H(g_0))\|_{\mathcal{B}_{0}(\Omega)\times\mathcal{B}_{1}(\Sigma)}^{(2+m\delta)}\sum^{m-1}_{j=0}\|(R' - R(g_0), H' - H(g_0))\|_{\mathcal{B}_{0}(\Omega)\times\mathcal{B}_{1}(\Sigma)}^{(j\delta)}\right)\\
& \leq 2C'C^2\epsilon^{1-\delta}(1-\epsilon^\delta)^{-1}\left(\|R' - R(g_0)\|_{\mathcal{B}_{0}(\Omega)} + \|H' - H(g_0)\|_{\mathcal{B}_{1}(\Sigma)}\right)^{(1+(m+1)\delta)}.
\end{align*}
Choose $\epsilon>0$ small enough so that $2C'C^2\epsilon^{1-\delta}(1-\epsilon^\delta)^{-1}\le1$.
\end{proof}

This shows that the series $\sum_{m=0}^{\infty} a_{m}$ converges geometrically to some “small” $a\in \mathcal{B}_{2}(\Omega)$, and hence $g_m$ converges in $\mathcal{M}^{k+2,\alpha}$ to $g=g_0 + a$ with $R(g)=R'$ and $H(g)=H'$. This completes the proof of our main Theorem \ref{main}.



\section*{Appendix. The linearization of the mean curvature}
We would like to use local coordinates to derive the formula for the linearization of the mean curvature $\dot{H}$. Consider the variation of metrics $g_t = g_0 + ta$ on $\overline{\Omega}$, where $a\in\mathcal{C}^\infty(\overline{\Omega})$. In the following discussion, we use a dot to mean the derivative at $t = 0$. 

Let us consider the inclusion map
$$F = i: \p\Omega \to \overline{\Omega}; \qquad x = (x^1,\dots, x^{n-1}) \mapsto F(x) = (F^1(x),\dots, F^n(x)).$$
Here we are using local coordinates on $\overline{\Omega}$ that need not be adapted to $\p\Omega$. Note that the inclusion map and local coordinates are invariant under the variation. Then the induced metric on $\p\Omega$ is
$$\hat{g}_{ij} (t) = g_{\alpha\beta}(t) \frac{\p F^\alpha}{\p x^i} \frac{\p F^\beta}{\p x^j}$$
where $i, j = 1, \dots, n-1; \, \, \alpha, \beta = 1, \dots, n$; and we have
$$\dot{\hat{g}}_{ij} = \left.\frac{d}{d t}\right|_{t=0}\left(g_{0}+t a\right)_{\alpha \beta} \frac{\partial F^{\alpha}}{\partial x^{i}} \frac{\partial F^{\beta}}{\partial x^j}=a_{\alpha \beta} \frac{\partial F^{\alpha}}{\partial x^{i}} \frac{\partial F^{\beta}}{\partial x^{j}}.$$
Here we are using coordinates for which the Christoffel symbols for $g_0$ vanish at a point $p\in\p\Omega$.

Let $\nu(t)$ be the outer unit normal (w.r.t. $g_t$) to $\p\Omega$. We choose a local field of frames $e_1,\dots, e_n$ in $\Omega$ such that restricted to $\p\Omega$, we have $e_i = \p F/\p x^i, e_n(t) = \nu(t)$. Then

$$
\left\{\begin{aligned} 
\|\nu(t)\|^2_{g_t} = g_{\alpha\beta}(t) \nu^\alpha(t) \nu^\beta(t) & \equiv 1  \\ 
\left<\nu(t), e_i\right>_{g_t} = g_{\alpha\beta}(t) \nu^\alpha(t) e^\beta_i & \equiv 0 \quad (i=1,\dots, n-1).
\end{aligned}\right.
$$
Differentiating them on both sides and evaluating at $t = 0$, we will get
\begin{align}\label{normal}
\left\{\begin{aligned} 
2g_{\alpha\beta}^0 \dot{\nu}^\alpha \nu^\beta + a_{\alpha\beta} \nu^\alpha \nu^\beta & = 0\\ 
g_{\alpha\beta}^0 \dot{\nu}^\alpha e^\beta_i + a_{\alpha\beta} \nu^\alpha e^\beta_i & = 0.
\end{aligned}\right.
\end{align}
Here and in the following calculation $g^0 = g_0$. If we write 
$$\dot{\nu}=c \nu + c^i e_i,$$
then from (\ref{normal}) we will get 
$$
\left\{\begin{aligned} 
-\frac12 a(\nu, \nu) & = \left<\dot{\nu}, \nu\right>_{g_0} = \left<c \nu, \nu\right>_{g_0} = c\\
-a(\nu, e_i) & = \left<\dot{\nu}, e_i\right>_{g_0} = \left<c^j e_j, e_i\right>_{g_0} = c^j \hat{g}_{ij}.
\end{aligned}\right.
$$
So $$\dot{\nu}=-\frac12 a(\nu, \nu) \, \nu - \hat{g}^{ij} a(\nu, e_j) \, e_i.$$
The second fundamental form $h_{ij}$ is
$$h_{ij}(t) = -\left<\nu(t), D_{e_i}e_j\right>_{g_t} = -g_{\alpha\theta}(t)\nu^\theta(t)\left(\frac{\partial^{2} F^{\alpha}}{\partial x^{i} \partial x^{j}} + \Gamma_{\beta \gamma}^{\alpha}(t) \frac{\partial F^{\beta}}{\partial x^{i}} \frac{\partial F^{\gamma}}{\partial x^{j}}\right)$$
where the Levi-Civita connection 
$
\Gamma_{\beta \gamma}^{\alpha} = \frac12 g^{\alpha\sigma} \left(\p_\beta g_{\gamma\sigma} + \p_\gamma g_{\beta\sigma} - \p_\sigma g_{\beta\gamma}\right)
$.
Note that we assumed $\p_\alpha g^0_{\beta\gamma} \equiv 0$, so
$$\dot{\Gamma}_{\beta \gamma}^{\alpha} = \frac12 g_0^{\alpha\sigma}\left(\p_\beta a_{\gamma\sigma} + \p_\gamma a_{\beta\sigma} - \p_\sigma a_{\beta\gamma}\right).$$
Thus,
\begin{align*}
\dot{h}_{ij} 
		& = -a(\nu, D_{e_i}e_j) - \left<-\frac12 a(\nu, \nu) \nu - \hat{g}^{kl} a(\nu, e_l) e_k, D_{e_i}e_j\right>_{g_0}\\
		& \quad - g^0_{\alpha\theta}\, \nu^\theta \frac12 g_0^{\alpha\sigma}\left(\p_\beta a_{\gamma\sigma} + \p_\gamma a_{\beta\sigma} - \p_\sigma a_{\beta\gamma}\right)e^\beta_i e^\gamma_j\\
		& = -a(\nu, D_{e_i}e_j) - \frac12 a(\nu, \nu) h_{ij} + \hat{g}^{kl} a(\nu, e_l) \left<e_k, D_{e_i}e_j\right>_{g_0}\\
		& \quad - \frac12\nu^\sigma \left(\p_\beta a_{\gamma\sigma} + \p_\gamma a_{\beta\sigma} - \p_\sigma a_{\beta\gamma}\right)e^\beta_i e^\gamma_j.
\end{align*}
By the Gauss-Weingarten Formulae,
\begin{align*}
D^\alpha_{e_i}e_j - \nabla^\alpha_{e_i}e_j & = \frac{\partial^{2} F^{\alpha}}{\partial x^{i} \partial x^{j}} + \Gamma_{\beta \gamma}^{\alpha} \frac{\partial F^{\beta}}{\partial x^{i}} \frac{\partial F^{\gamma}}{\partial x^{j}} - \hat{\Gamma}_{i j}^{k} \frac{\partial F^{\alpha}}{\partial x^{k}} = -h_{i j} \nu^{\alpha},\\
D^\alpha_{e_i}\nu & = \frac{\partial \nu^{\alpha}}{\partial x^{i}}+\Gamma_{\beta \gamma}^{\alpha} \frac{\partial F^{\beta}}{\partial x^{i}} \nu^{\gamma} = h_{i j} \hat{g}^{j l} \frac{\partial F^{\alpha}}{\partial x^{l}},
\end{align*}
we can rewrite the first three terms of $\dot{h}_{ij}$ as
\begin{align*}
\text{I} + \text{II} + \text{III} &= -a(\nu, D_{e_i}e_j) - \frac12 a(\nu, \nu) h_{ij} + \hat{g}^{kl} a(\nu, e_l) \left<e_k, \hat{\Gamma}_{i j}^p e_p - h_{i j} \nu\right>_{g_0}\\
		&= -a(\nu, D_{e_i}e_j) - \frac12 a(\nu, \nu) h_{ij} + a(\nu, e_l) \hat{\Gamma}_{i j}^l\\
		&= a(\nu, e_l \hat{\Gamma}_{i j}^l - D_{e_i}e_j) - \frac12 a(\nu, \nu)h_{ij}\\
		&= a(\nu, h_{ij}\nu) - \frac12 a(\nu, \nu) h_{ij}\\
		& = \frac12 a(\nu, \nu) h_{ij}.
\end{align*}
So
$$\dot{h}_{ij} = \frac12 a(\nu, \nu)h_{ij} - \frac12\nu^\sigma \left(\p_\beta a_{\gamma\sigma} + \p_\gamma a_{\beta\sigma} - \p_\sigma a_{\beta\gamma}\right)e^\beta_i e^\gamma_j.$$

Finally, the linearization $\dot{H}$ of the mean curvature operator is
\begin{align*}
\dot{H} & = \dot{h}_{ij}\hat{g}^{ij} - h_{ij}\hat{g}^{ik}\dot{\hat{g}}_{kl}\hat{g}^{lj}\\
	& = \left(\frac12 a(\nu, \nu) h_{ij} - \frac12\nu^\sigma \left(\p_\beta a_{\gamma\sigma} + \p_\gamma a_{\beta\sigma} - \p_\sigma a_{\beta\gamma}\right)e^\beta_i e^\gamma_j\right)\hat{g}^{ij} - a(e_k, e_l)h^{kl}\\
	& = - \frac12 \left(2\p_\beta a_{\gamma\sigma} - \p_\sigma a_{\beta\gamma}\right)\nu^\sigma e^\beta_i e^\gamma_j \hat{g}^{ij} + \frac12 a(\nu, \nu) H - \left<a, h\right>_{\hat{g}}.
\end{align*}
We note that
$$
\nu(\hat{g}_{ij}) = \nu\left<e_i, e_j\right>_{g_0} = \left<D_\nu e_i, e_j\right>_{g_0} + \left<e_i, D_\nu e_j\right>_{g_0} = \left<D_{e_i} \nu, e_j\right>_{g_0} + \left<e_i, D_{e_j} \nu\right>_{g_0} = 2h_{ij},
$$
and
$$
\nu(\hat{g}^{ij}) = -\hat{g}^{ik}\nu(\hat{g}_{kl})\hat{g}^{lj} = -\hat{g}^{ik}(2h_{kl})\hat{g}^{lj} = -2h^{ij}.
$$
Here we are using Fermi coordinates so that $\nu$ and $e_i$ commute.
So
\begin{align*}
\frac12 \p_\sigma a_{\beta\gamma}\nu^\sigma e^\beta_i e^\gamma_j \hat{g}^{ij}& = \frac12 \nu(a_{\beta\gamma} e^\beta_i e^\gamma_j) \hat{g}^{ij} - a_{\beta\gamma} \nu(e^\beta_i) e^\gamma_j \hat{g}^{ij}\\
		& = \frac12 \nu(a(e_i, e_j)) \hat{g}^{ij} - a(D_\nu e_i, e_j) \hat{g}^{ij}\\
		& = \frac12 \nu(a(e_i, e_j) \hat{g}^{ij}) - \frac12 a(e_i, e_j) \nu(\hat{g}^{ij}) - a(h_{ik} \hat{g}^{kl}e_l, e_j) \hat{g}^{ij}\\
		& = \frac12 \nu(\operatorname{tr}_{\hat{g}}a) + a(e_i, e_j)h^{ij} - a(e_l, e_j) h^{jl}\\
		& = \frac12 \nu(\operatorname{tr}_{\p\Omega} a),
\end{align*}
and
\begin{align*}
-\p_\beta a_{\gamma\sigma}\nu^\sigma e^\beta_i e^\gamma_j \hat{g}^{ij}& = -e_i(a_{\gamma\sigma}\nu^\sigma e^\gamma_j)\hat{g}^{ij} + a_{\gamma\sigma}\nu^\sigma e_i(e^\gamma_j)\hat{g}^{ij} + a_{\gamma\sigma}e_i(\nu^\sigma) e^\gamma_j\hat{g}^{ij}\\
		& = -e_i(a(\nu, e_j))\hat{g}^{ij} + a(\nu, D_{e_i}e_j)\hat{g}^{ij} + a(D_{e_i}\nu, e_j)\hat{g}^{ij}\\
		& = -e_i(a(\nu, e_j))\hat{g}^{ij} + a(\nu, \nabla_{e_i}e_j)\hat{g}^{ij} - a(\nu, h_{ij}\nu)\hat{g}^{ij} + a(e_l, e_j) h^{jl}\\
		& = -\operatorname{div}_{\p\Omega} a(\nu, \cdot) - a(\nu, \nu) H + \left<a, h\right>_{\hat{g}},
\end{align*}
where $\operatorname{div}_{\p\Omega} a(\nu,\cdot) = \left<\nabla_{e_i} (a(\nu,\cdot)), e_j\right>\hat{g}^{ij}$. Thus,
\begin{align*}
\dot{H} & = -\operatorname{div}_{\p\Omega} a(\nu, \cdot) - a(\nu, \nu) H + \left<a, h\right>_{\hat{g}} + \frac12 \nu(\operatorname{tr}_{\p\Omega} a) + \frac12 a(\nu, \nu) H - \left<a, h\right>_{\hat{g}}\\
		& = \frac12 \nu(\operatorname{tr}_{\p\Omega} a) - \operatorname{div}_{\p\Omega} a(\nu, \cdot) - \frac12 a(\nu, \nu) H.
\end{align*}
Strictly speaking, the term $\nu(\operatorname{tr}_{\partial\Omega} a)$ is well-defined only with respect to a foliation near $\partial\Omega$. However, the variation $\dot{H}$ itself is independent of this choice.




\bibliographystyle{amsplain}
\begin{bibdiv}
\begin{biblist}

\bib{A-D-N}{article}{
   author={Agmon, S.},
   author={Douglis, A.},
   author={Nirenberg, L.},
   title={Estimates near the boundary for solutions of elliptic partial
   differential equations satisfying general boundary conditions. I},
   journal={Comm. Pure Appl. Math.},
   volume={12},
   date={1959},
   pages={623--727},
   issn={0010-3640},
   review={\MR{125307}},
   doi={10.1002/cpa.3160120405},
}

\bib{A-B-dL}{article}{
   author={Almaraz, S\'{e}rgio},
   author={Barbosa, Ezequiel},
   author={de Lima, Levi Lopes},
   title={A positive mass theorem for asymptotically flat manifolds with a
   non-compact boundary},
   journal={Comm. Anal. Geom.},
   volume={24},
   date={2016},
   number={4},
   pages={673--715},
   issn={1019-8385},
   review={\MR{3570413}},
   doi={10.4310/CAG.2016.v24.n4.a1},
}

\bib{A-dL}{article}{
   author={Almaraz, S\'{e}rgio},
   author={de Lima, Levi Lopes},
   title={The mass of an asymptotically hyperbolic manifold with a
   non-compact boundary},
   journal={Ann. Henri Poincar\'{e}},
   volume={21},
   date={2020},
   number={11},
   pages={3727--3756},
   issn={1424-0637},
   review={\MR{4163306}},
   doi={10.1007/s00023-020-00954-w},
}

\bib{A-dL2}{article}{
   author={Almaraz, S\'{e}rgio},
   author={de Lima, Levi Lopes},
   title={Rigidity of non-compact static domains in hyperbolic space via positive mass theorems},
   date={2022},
   eprint={arXiv 2206.09768},
}


\bib{Aubin}{book}{
   author={Aubin, Thierry},
   title={Some nonlinear problems in Riemannian geometry},
   series={Springer Monographs in Mathematics},
   publisher={Springer-Verlag, Berlin},
   date={1998},
   pages={xviii+395},
   isbn={3-540-60752-8},
   review={\MR{1636569}},
   doi={10.1007/978-3-662-13006-3},
}





\bib{B-E-M}{article}{
   author={Bourguignon, Jean-Pierre},
   author={Ebin, David G.},
   author={Marsden, Jerrold E.},
   title={Sur le noyau des op\'erateurs pseudo-diff\'erentiels \`a{} symbole
   surjectif et non injectif},
   journal={C. R. Acad. Sci. Paris S\'er. A-B},
   volume={282},
   date={1976},
   number={16},
   pages={Aii, A867--A870},
   issn={0151-0509},
   review={\MR{0402829}},
}



\bib{Brendle-C}{article}{
   author={Brendle, Simon},
   author={Chen, Szu-Yu Sophie},
   title={An existence theorem for the Yamabe problem on manifolds with
   boundary},
   journal={J. Eur. Math. Soc. (JEMS)},
   volume={16},
   date={2014},
   number={5},
   pages={991--1016},
   issn={1435-9855},
   review={\MR{3210959}},
   doi={10.4171/JEMS/453},
}

\bib{Carlotto-S}{article}{
   author={Carlotto, Alessandro},
   author={Schoen, Richard},
   title={Localizing solutions of the Einstein constraint equations},
   journal={Invent. Math.},
   volume={205},
   date={2016},
   number={3},
   pages={559--615},
   issn={0020-9910},
   review={\MR{3539922}},
   doi={10.1007/s00222-015-0642-4},
}



\bib{C-D}{article}{
   author={Chru\'{s}ciel, Piotr T.},
   author={Delay, Erwann},
   title={On mapping properties of the general relativistic constraints
   operator in weighted function spaces, with applications},
   language={English, with English and French summaries},
   journal={M\'{e}m. Soc. Math. Fr. (N.S.)},
   number={94},
   date={2003},
   pages={vi+103},
   issn={0249-633X},
   review={\MR{2031583}},
   doi={10.24033/msmf.407},
}

\bib{C}{article}{
   author={Corvino, Justin},
   title={Scalar curvature deformation and a gluing construction for the
   Einstein constraint equations},
   journal={Comm. Math. Phys.},
   volume={214},
   date={2000},
   number={1},
   pages={137--189},
   issn={0010-3616},
   review={\MR{1794269}},
   doi={10.1007/PL00005533},
}

\bib{C-E-M}{article}{
   author={Corvino, Justin},
   author={Eichmair, Michael},
   author={Miao, Pengzi},
   title={Deformation of scalar curvature and volume},
   journal={Math. Ann.},
   volume={357},
   date={2013},
   number={2},
   pages={551--584},
   issn={0025-5831},
   review={\MR{3096517}},
   doi={10.1007/s00208-013-0903-8},
}

\bib{C-H}{article}{
   author={Corvino, Justin},
   author={Huang, Lan-Hsuan},
   title={Localized deformation for initial data sets with the dominant
   energy condition},
   journal={Calc. Var. Partial Differential Equations},
   volume={59},
   date={2020},
   number={1},
   pages={Paper No. 42, 43},
   issn={0944-2669},
   review={\MR{4062040}},
   doi={10.1007/s00526-019-1679-9},
}

\bib{C-S}{article}{
   author={Corvino, Justin},
   author={Schoen, Richard M.},
   title={On the asymptotics for the vacuum Einstein constraint equations},
   journal={J. Differential Geom.},
   volume={73},
   date={2006},
   number={2},
   pages={185--217},
   issn={0022-040X},
   review={\MR{2225517}},
}

\bib{C-V}{article}{
   author={Cruz, Tiarlos},
   author={Vit\'{o}rio, Feliciano},
   title={Prescribing the curvature of Riemannian manifolds with boundary},
   journal={Calc. Var. Partial Differential Equations},
   volume={58},
   date={2019},
   number={4},
   pages={Paper No. 124, 19},
   issn={0944-2669},
   review={\MR{3977557}},
   doi={10.1007/s00526-019-1584-2},
}


\bib{Dain-J-K}{article}{
   author={Dain, Sergio},
   author={Jaramillo, Jos\'{e} Luis},
   author={Krishnan, Badri},
   title={Existence of initial data containing isolated black holes},
   journal={Phys. Rev. D (3)},
   volume={71},
   date={2005},
   number={6},
   pages={064003, 11},
   issn={0556-2821},
   review={\MR{2138833}},
   doi={10.1103/PhysRevD.71.064003},
}

\bib{Delay}{article}{
   author={Delay, Erwann},
   title={Localized gluing of Riemannian metrics in interpolating their
   scalar curvature},
   journal={Differential Geom. Appl.},
   volume={29},
   date={2011},
   number={3},
   pages={433--439},
   issn={0926-2245},
   review={\MR{2795849}},
   doi={10.1016/j.difgeo.2011.03.008},
}



\bib{E}{article}{
   author={Escobar, Jos\'{e} F.},
   title={Uniqueness theorems on conformal deformation of metrics, Sobolev
   inequalities, and an eigenvalue estimate},
   journal={Comm. Pure Appl. Math.},
   volume={43},
   date={1990},
   number={7},
   pages={857--883},
   issn={0010-3640},
   review={\MR{1072395}},
   doi={10.1002/cpa.3160430703},
}

\bib{E1}{article}{
   author={Escobar, Jos\'{e} F.},
   title={The Yamabe problem on manifolds with boundary},
   journal={J. Differential Geom.},
   volume={35},
   date={1992},
   number={1},
   pages={21--84},
   issn={0022-040X},
   review={\MR{1152225}},
}

\bib{E2}{article}{
   author={Escobar, Jos\'{e} F.},
   title={Conformal deformation of a Riemannian metric to a scalar flat
   metric with constant mean curvature on the boundary},
   journal={Ann. of Math. (2)},
   volume={136},
   date={1992},
   number={1},
   pages={1--50},
   issn={0003-486X},
   review={\MR{1173925}},
   doi={10.2307/2946545},
}

\bib{E3}{article}{
   author={Escobar, Jos\'{e} F.},
   title={Conformal metrics with prescribed mean curvature on the boundary},
   journal={Calc. Var. Partial Differential Equations},
   volume={4},
   date={1996},
   number={6},
   pages={559--592},
   issn={0944-2669},
   review={\MR{1416000}},
   doi={10.1007/BF01261763},
}

\bib{E4}{article}{
   author={Escobar, Jos\'{e} F.},
   title={Conformal deformation of a Riemannian metric to a constant scalar
   curvature metric with constant mean curvature on the boundary},
   journal={Indiana Univ. Math. J.},
   volume={45},
   date={1996},
   number={4},
   pages={917--943},
   issn={0022-2518},
   review={\MR{1444473}},
   doi={10.1512/iumj.1996.45.1344},
}



\bib{F-M}{article}{
   author={Fischer, Arthur E.},
   author={Marsden, Jerrold E.},
   title={Deformations of the scalar curvature},
   journal={Duke Math. J.},
   volume={42},
   date={1975},
   number={3},
   pages={519--547},
   issn={0012-7094},
   review={\MR{380907}},
}

\bib{G}{book}{
   author={Grubb, Gerd},
   title={Distributions and operators},
   series={Graduate Texts in Mathematics},
   volume={252},
   publisher={Springer, New York},
   date={2009},
   pages={xii+461},
   isbn={978-0-387-84894-5},
   review={\MR{2453959}},
}

\bib{Han-Li}{article}{
   author={Han, Zheng-Chao},
   author={Li, Yanyan},
   title={The Yamabe problem on manifolds with boundary: existence and
   compactness results},
   journal={Duke Math. J.},
   volume={99},
   date={1999},
   number={3},
   pages={489--542},
   issn={0012-7094},
   review={\MR{1712631}},
   doi={10.1215/S0012-7094-99-09916-7},
}

\bib{Han-Li1}{article}{
   author={Han, Zheng-Chao},
   author={Li, Yanyan},
   title={The existence of conformal metrics with constant scalar curvature
   and constant boundary mean curvature},
   journal={Comm. Anal. Geom.},
   volume={8},
   date={2000},
   number={4},
   pages={809--869},
   issn={1019-8385},
   review={\MR{1792375}},
   doi={10.4310/CAG.2000.v8.n4.a5},
}

\bib{H-S}{article}{
   author={Hardt, Robert},
   author={Simon, Leon},
   title={Area minimizing hypersurfaces with isolated singularities},
   journal={J. Reine Angew. Math.},
   volume={362},
   date={1985},
   pages={102--129},
   issn={0075-4102},
   review={\MR{809969}},
   doi={10.1515/crll.1985.362.102},
}

\bib{H-M-R}{article}{
   author={Hijazi, Oussama},
   author={Montiel, Sebasti\'{a}n},
   author={Raulot, Simon},
   title={A positive mass theorem for asymptotically hyperbolic manifolds
   with inner boundary},
   journal={Internat. J. Math.},
   volume={26},
   date={2015},
   number={12},
   pages={1550101, 17},
   issn={0129-167X},
   review={\MR{3432532}},
   doi={10.1142/S0129167X15501013},
}

\bib{Ku84}{book}{
   author={Kufner, Alois},
   title={Weighted Sobolev spaces},
   series={Teubner-Texte zur Mathematik [Teubner Texts in Mathematics]},
   volume={31},
   note={With German, French and Russian summaries},
   publisher={BSB B. G. Teubner Verlagsgesellschaft, Leipzig},
   date={1980},
   pages={151},
   review={\MR{0664599}},
}

\bib{L-M}{book}{
   author={Lions, J.-L.},
   author={Magenes, E.},
   title={Non-homogeneous boundary value problems and applications. Vol. I},
   series={Die Grundlehren der mathematischen Wissenschaften},
   volume={Band 181},
   note={Translated from the French by P. Kenneth},
   publisher={Springer-Verlag, New York-Heidelberg},
   date={1972},
   pages={xvi+357},
   review={\MR{0350177}},
}

\bib{Marques}{article}{
   author={Marques, Fernando C.},
   title={Conformal deformations to scalar-flat metrics with constant mean
   curvature on the boundary},
   journal={Comm. Anal. Geom.},
   volume={15},
   date={2007},
   number={2},
   pages={381--405},
   issn={1019-8385},
   review={\MR{2344328}},
}

\bib{Mayer-N}{article}{
   author={Mayer, Martin},
   author={Ndiaye, Cheikh Birahim},
   title={Barycenter technique and the Riemann mapping problem of
   Cherrier-Escobar},
   journal={J. Differential Geom.},
   volume={107},
   date={2017},
   number={3},
   pages={519--560},
   issn={0022-040X},
   review={\MR{3715348}},
   doi={10.4310/jdg/1508551224},
}

\bib{M}{article}{
   author={Miao, Pengzi},
   title={Positive mass theorem on manifolds admitting corners along a
   hypersurface},
   journal={Adv. Theor. Math. Phys.},
   volume={6},
   date={2002},
   number={6},
   pages={1163--1182 (2003)},
   issn={1095-0761},
   review={\MR{1982695}},
   doi={10.4310/ATMP.2002.v6.n6.a4},
}

\bib{M-T}{article}{
   author={Miao, Pengzi},
   author={Tam, Luen-Fai},
   title={On the volume functional of compact manifolds with boundary with
   constant scalar curvature},
   journal={Calc. Var. Partial Differential Equations},
   volume={36},
   date={2009},
   number={2},
   pages={141--171},
   issn={0944-2669},
   review={\MR{2546025}},
   doi={10.1007/s00526-008-0221-2},
}



\bib{Schoen}{article}{
   author={Schoen, Richard},
   title={Conformal deformation of a Riemannian metric to constant scalar
   curvature},
   journal={J. Differential Geom.},
   volume={20},
   date={1984},
   number={2},
   pages={479--495},
   issn={0022-040X},
   review={\MR{0788292}},
}

\bib{S-Y1}{article}{
   author={Schoen, Richard},
   author={Yau, Shing Tung},
   title={On the proof of the positive mass conjecture in general
   relativity},
   journal={Comm. Math. Phys.},
   volume={65},
   date={1979},
   number={1},
   pages={45--76},
   issn={0010-3616},
   review={\MR{526976}},
}

\bib{S-Y3}{article}{
   author={Schoen, Richard},
   author={Yau, Shing Tung},
   title={Proof of the positive mass theorem. II},
   journal={Comm. Math. Phys.},
   volume={79},
   date={1981},
   number={2},
   pages={231--260},
   issn={0010-3616},
   review={\MR{612249}},
}

\bib{Sheng}{article}{
   author={Sheng, Hongyi},
   title={Static Manifolds with Boundary and Rigidity of Scalar Curvature
   and Mean Curvature},
   journal={Int. Math. Res. Not. IMRN},
   date={2025},
   number={7},
   pages={rnaf086},
   issn={1073-7928},
   review={\MR{4888700}},
   doi={10.1093/imrn/rnaf086},
}

\bib{Sheng-Zhao}{article}{
   author={Sheng, Hongyi},
   author={Zhao, Kai-Wei},
   title={A Gluing Construction for the Einstein Constraint Equations in Half Space}, 
   journal={in preparation},
}

\bib{S-T}{article}{
   author={Shi, Yuguang},
   author={Tam, Luen-Fai},
   title={Positive mass theorem and the boundary behaviors of compact
   manifolds with nonnegative scalar curvature},
   journal={J. Differential Geom.},
   volume={62},
   date={2002},
   number={1},
   pages={79--125},
   issn={0022-040X},
   review={\MR{1987378}},
}

\bib{bS}{book}{
   author={Simon, Barry},
   title={A Comprehensive Course in Analysis, Part 4: Operator Theory},
   publisher={American Mathematical Society, Providence, RI},
   date={2015},
}

\bib{lS4}{article}{
   author={Simon, Leon},
   title={Schauder estimates by scaling},
   journal={Calc. Var. Partial Differential Equations},
   volume={5},
   date={1997},
   number={5},
   pages={391--407},
   issn={0944-2669},
   review={\MR{1459795}},
   doi={10.1007/s005260050072},
}

\bib{Sm}{article}{
   author={Smale, Nathan},
   title={Generic regularity of homologically area minimizing hypersurfaces
   in eight-dimensional manifolds},
   journal={Comm. Anal. Geom.},
   volume={1},
   date={1993},
   number={2},
   pages={217--228},
   issn={1019-8385},
   review={\MR{1243523}},
   doi={10.4310/CAG.1993.v1.n2.a2},
}

\bib{Trudinger}{article}{
   author={Trudinger, Neil S.},
   title={Remarks concerning the conformal deformation of Riemannian
   structures on compact manifolds},
   journal={Ann. Scuola Norm. Sup. Pisa Cl. Sci. (3)},
   volume={22},
   date={1968},
   pages={265--274},
   issn={0391-173X},
   review={\MR{0240748}},
}

\end{biblist}
\end{bibdiv}

\end{document}